\documentclass[a4paper, british]{amsart}

%
%

\usepackage{amssymb}
\usepackage{babel}
\usepackage{enumitem}
\usepackage{hyperref}
\usepackage[utf8]{inputenc}
\usepackage{newunicodechar}
\usepackage{mathtools}
\usepackage{varioref}
\usepackage[arrow,curve,matrix]{xy}

\usepackage{colortbl}
\usepackage{graphicx}
\usepackage{tikz}

\usepackage{mathrsfs}

%
%

\definecolor{linkred}{rgb}{0.7,0.2,0.2}
\definecolor{linkblue}{rgb}{0,0.2,0.6}

\setcounter{tocdepth}{1}

\numberwithin{figure}{section}

\usepackage[hyperpageref]{backref}


\sloppy

\setdescription{labelindent=\parindent, leftmargin=2\parindent}
\setitemize[1]{labelindent=\parindent, leftmargin=2\parindent}
\setenumerate[1]{labelindent=0cm, leftmargin=*, widest=iiii}

%
%
\newunicodechar{α}{\ensuremath{\alpha}}
\newunicodechar{β}{\ensuremath{\beta}}
\newunicodechar{χ}{\ensuremath{\chi}}
\newunicodechar{δ}{\ensuremath{\delta}}
\newunicodechar{ε}{\ensuremath{\varepsilon}}
\newunicodechar{Δ}{\ensuremath{\Delta}}
\newunicodechar{η}{\ensuremath{\eta}}
\newunicodechar{γ}{\ensuremath{\gamma}}
\newunicodechar{Γ}{\ensuremath{\Gamma}}
\newunicodechar{ι}{\ensuremath{\iota}}
\newunicodechar{κ}{\ensuremath{\kappa}}
\newunicodechar{λ}{\ensuremath{\lambda}}
\newunicodechar{Λ}{\ensuremath{\Lambda}}
\newunicodechar{μ}{\ensuremath{\mu}}
\newunicodechar{ω}{\ensuremath{\omega}}
\newunicodechar{Ω}{\ensuremath{\Omega}}
\newunicodechar{π}{\ensuremath{\pi}}
\newunicodechar{Π}{\ensuremath{\Pi}}
\newunicodechar{φ}{\ensuremath{\phi}}
\newunicodechar{Φ}{\ensuremath{\Phi}}
\newunicodechar{ψ}{\ensuremath{\psi}}
\newunicodechar{Ψ}{\ensuremath{\Psi}}
\newunicodechar{ρ}{\ensuremath{\rho}}
\newunicodechar{σ}{\ensuremath{\sigma}}
\newunicodechar{Σ}{\ensuremath{\Sigma}}
\newunicodechar{τ}{\ensuremath{\tau}}
\newunicodechar{θ}{\ensuremath{\theta}}
\newunicodechar{Θ}{\ensuremath{\Theta}}

\newunicodechar{𝔹}{\ensuremath{\bB}}
\newunicodechar{ℂ}{\ensuremath{\bC}}
\newunicodechar{ℕ}{\ensuremath{\bN}}
\newunicodechar{ℙ}{\ensuremath{\bP}}
\newunicodechar{ℚ}{\ensuremath{\bQ}}
\newunicodechar{ℝ}{\ensuremath{\bR}}
\newunicodechar{ℤ}{\ensuremath{\bZ}}

\newunicodechar{∇}{\ensuremath{\nabla}}

\newunicodechar{⊕}{\ensuremath{\oplus}}
\newunicodechar{⊗}{\ensuremath{\otimes}}
\newunicodechar{Λ}{\ensuremath{\wedge}}
\newunicodechar{→}{\ensuremath{\to}}
\newunicodechar{⨯}{\ensuremath{\times}}
\newunicodechar{∪}{\ensuremath{\cup}}
\newunicodechar{∩}{\ensuremath{\cap}}
\newunicodechar{⊇}{\ensuremath{\supseteq}}
\newunicodechar{⊃}{\ensuremath{\supset}}
\newunicodechar{⊆}{\ensuremath{\subseteq}}
\newunicodechar{⊂}{\ensuremath{\subset}}
\newunicodechar{≥}{\ensuremath{\geq}}
\newunicodechar{≤}{\ensuremath{\leq}}
\newunicodechar{∈}{\ensuremath{\in}}
\newunicodechar{◦}{\ensuremath{\circ}}
\newunicodechar{°}{\ensuremath{^\circ}}
\newunicodechar{…}{\ifmmode\mathellipsis\else\textellipsis\fi}
\newunicodechar{·}{\ensuremath{\cdot}}
\newunicodechar{∅}{\ensuremath{\emptyset}}

\newunicodechar{¹}{\ensuremath{^1}}
\newunicodechar{²}{\ensuremath{^2}}
\newunicodechar{³}{\ensuremath{^3}}

%
%

\DeclareFontFamily{OMS}{rsfs}{\skewchar\font'60}
\DeclareFontShape{OMS}{rsfs}{m}{n}{<-5>rsfs5 <5-7>rsfs7 <7->rsfs10 }{}
\DeclareSymbolFont{rsfs}{OMS}{rsfs}{m}{n}
\DeclareSymbolFontAlphabet{\scr}{rsfs}
\DeclareSymbolFontAlphabet{\scr}{rsfs}

%
%

\DeclareMathOperator{\Aut}{Aut}
\DeclareMathOperator{\codim}{codim}

\DeclareMathOperator{\const}{const}

\DeclareMathOperator{\Hom}{Hom}

\DeclareMathOperator{\Image}{Image}

\DeclareMathOperator{\rank}{rank}
\DeclareMathOperator{\Ramification}{Ramification}
\DeclareMathOperator{\red}{red}
\DeclareMathOperator{\reg}{reg}
\DeclareMathOperator{\sat}{sat}

\DeclareMathOperator{\Sym}{Sym}
\DeclareMathOperator{\supp}{supp}


\newcommand{\sE}{\scr{E}}
\newcommand{\sF}{\scr{F}}
\newcommand{\sG}{\scr{G}}
\newcommand{\sH}{\scr{H}}

\newcommand{\sL}{\scr{L}}

\newcommand{\sO}{\scr{O}}

\newcommand{\sQ}{\scr{Q}}

\newcommand{\sT}{\scr{T}}



\newcommand{\bB}{\mathbb{B}}
\newcommand{\bC}{\mathbb{C}}

\newcommand{\bN}{\mathbb{N}}

\newcommand{\bP}{\mathbb{P}}
\newcommand{\bQ}{\mathbb{Q}}
\newcommand{\bR}{\mathbb{R}}

\newcommand{\bZ}{\mathbb{Z}}

\theoremstyle{plain}   
\newtheorem{thm}{Theorem}[section]
 
\newtheorem{conjecture}[thm]{Conjecture}
\newtheorem{cor}[thm]{Corollary}
\newtheorem{defn}[thm]{Definition} 

\newtheorem{lem}[thm]{Lemma}

\newtheorem{prop}[thm]{Proposition}

\theoremstyle{remark}

\newtheorem{claim}[thm]{Claim}
\newtheorem{c-n-d}[thm]{Claim and Definition}
\newtheorem{consequence}[thm]{Consequence}
\newtheorem{construction}[thm]{Construction}

\newtheorem{explanation}[thm]{Explanation}
\newtheorem{notation}[thm]{Notation}
\newtheorem{obs}[thm]{Observation}
\newtheorem{rem}[thm]{Remark}

\newtheorem*{rem-nonumber}{Remark}
\newtheorem{setting}[thm]{Setting}
\newtheorem{warning}[thm]{Warning}

\numberwithin{equation}{thm}

\setlist[enumerate]{label=(\thethm.\arabic*), before={\setcounter{enumi}{\value{equation}}}, after={\setcounter{equation}{\value{enumi}}}}


\newcommand{\wtilde}{\widetilde}
\newcommand{\what}{\widehat}

%
%

%
%

\newcommand\CounterStep{\addtocounter{thm}{1}\setcounter{equation}{0}}

\newcommand{\factor}[2]{\left. \raise 2pt\hbox{$#1$} \right/\hskip -2pt\raise -2pt\hbox{$#2$}}

\makeatletter
\let\saveqed\qed
\renewcommand\qed{%
   \ifmmode\displaymath@qed
   \else\saveqed
   \fi}
\makeatother
%
%

\newcommand{\Publication}[1]{}

%
%
\newcommand{\subversionInfo}{}
\newcommand{\svnid}[1]{}
\newcommand{\approvals}[1]{}

%
%
\usepackage{mathpazo}

%
%

\DeclareMathOperator{\CDiv}{CDiv}
\DeclareMathOperator{\Chow}{Chow}
\DeclareMathOperator{\Cl}{Cl}
\DeclareMathOperator{\Div}{Div}
\DeclareMathOperator{\Branch}{Branch}
\DeclareMathOperator{\Defn}{Defn}

\DeclareMathOperator{\Hilb}{Hilb}
\DeclareMathOperator{\lcm}{lcm}

\DeclareMathOperator{\mult}{mult}
\DeclareMathOperator{\OrbiBranch}{OrbiBranch}
\DeclareMathOperator{\snc}{snc}
\DeclareMathOperator{\Univ}{Univ}

\DeclareMathOperator{\vol}{vol}
\DeclareMathOperator{\WDiv}{WDiv}

\author{Benoît Claudon}

\address{Benoît Claudon, Institut Elie Cartan Nancy, Université de Lorraine, site de Nancy,
  B.P.~70239, 54 506 Vandoeuvre-lés-Nancy, Cedex, France.}
\email{\href{mailto:benoit.claudon@univ-lorraine.fr}{benoit.claudon@univ-lorraine.fr}}
\urladdr{\href{http://iecl.univ-lorraine.fr/~Benoit.Claudon}{http://iecl.univ-lorraine.fr/$\sim$Benoit.Claudon}}

\author{Stefan Kebekus}

\address{Stefan Kebekus, Mathematisches Institut, Albert-Ludwigs-Universität
  Freiburg, Eckerstraße 1, 79104 Freiburg im Breisgau, Germany and University of
  Strasbourg Institute for Advanced Study (USIAS), Strasbourg, France}
\email{\href{mailto:stefan.kebekus@math.uni-freiburg.de}{stefan.kebekus@math.uni-freiburg.de}}
\urladdr{\href{http://home.mathematik.uni-freiburg.de/kebekus}{http://home.mathematik.uni-freiburg.de/kebekus}}
\thanks{Stefan Kebekus gratefully acknowledges support through a joint
  fellowship of the Freiburg Institute of Advanced Studies (FRIAS) and the
  University of Strasbourg Institute for Advanced Study (USIAS).
  Behrouz Taji was partially supported by the DFG-Graduiertenkolleg GK1821
  ``Cohomological Methods in Geometry'' at Freiburg.}

\author{Behrouz Taji}

\address{Behrouz Taji, Mathematisches Institut, Albert-Ludwigs-Universität Freiburg, Eckerstraße 1, 79104 Freiburg im Breisgau, Germany}
\email{\href{mailto:behrouz.taji@math.uni-freiburg.de}{behrouz.taji@math.uni-freiburg.de}}
\urladdr{\href{http://home.mathematik.uni-freiburg.de/taji}{http://home.mathematik.uni-freiburg.de/taji}}

\keywords{families, canonically-polarized manifolds, moduli, Kodaira dimension, foliation, generic semipositivity, minimal model program}

\subjclass[2010]{14D23, 14E05, 14E30, 14F10, 14J10, 14J17, 32Q45, 32Q26.}

\title{Generic positivity and applications to hyperbolicity of moduli spaces}
\date{\today}

\makeatletter
\hypersetup{
  pdfauthor={\authors},
  pdftitle={\@title},
  pdfsubject={\@subjclass},
  pdfkeywords={\@keywords},
  pdfstartview={Fit},
  pdfpagelayout={TwoColumnRight},
  pdfpagemode={UseOutlines},
  bookmarks,
  colorlinks,
  linkcolor=linkblue,
  citecolor=linkred,
  urlcolor=linkred
}
\makeatother

\begin{document}

\begin{abstract}
The proof of the celebrated Viehweg's hyperbolicity conjecture is a consequence
of two remarkable results: Viehweg and Zuo's existence results for global
pluri-differential forms induced by variation in a family of canonically
polarised manifolds and Campana and Păun's vast generalisation of Miyaoka's
generic semipositivity result for non-uniruled varieties to the context of
pairs.  The aim of this chapter is an exposition of Campana-Păun's generic
semipositivity theorem.
\end{abstract}

\maketitle
\approvals{Behrouz & yes \\ Benoît & yes \\ Stefan & yes}
\tableofcontents

%
%
\svnid{$Id: 01-intro.tex 169 2016-10-18 13:42:18Z kebekus $}

\section{Introduction}
\subversionInfo
\approvals{Behrouz & yes \\ Benoît & yes \\ Stefan & yes}

In 1962 Shafarevich conjectured that a smooth family $f°: X° → Y°$ of complex
projective curves of genus at least equal to $2$, parameterized by $Y°= ℙ^1$,
$ℂ$, $ℂ^*$, or an elliptic curve $E$ is isotrivial, so that is there is no
variation in the algebraic structure of the members of the family.  Equivalently
this conjecture can be expressed as the prediction that the base $Y°$ of any
smooth, non-isotrivial family of projective curves with $g≥ 2$ is of log-general
type.  In other words, we have $κ(Y,K_Y+D)=1$ for any smooth compactification
$(Y,D)$ of $Y°$, with snc boundary divisor $D$.  Shafarevich conjecture was
shown by Parshin and Arakelov.
 
To generalise the Shafarevich conjecture to higher dimensional fibres and
parametrizing spaces, Viehweg considered the hyperbolicity properties of the
moduli stack of canonically-polarised manifolds.  Recall that the moduli functor
$\mathcal M$ of canonically-polarised manifolds with fixed Hilbert polynomial,
is equipped with a natural transformation
\begin{equation*}\label{eq:modulimaps}
  Ψ: \mathcal M (·) → \Hom(·, \mathfrak M),
\end{equation*}
where $\mathfrak M$ denotes the coarse moduli scheme associated with $\mathcal M$.
The scheme $\mathfrak M$ was proved by Viehweg to be quasi-projective,
cf.~\cite{Viehweg95}.  Also recall that a complex analytic space $U$ is said to
be Brody hyperbolic if there are no non-constant holomorphic maps $f: ℂ→ U$.  In
the spirit of this definition, Shafarevich's conjecture is equivalent to the
assertion that the base $Y°$ of non-isotrivial, smooth, projective families of
high genus curves is algebraically Brody hyperbolic in the sense that there are
no non-constant morphisms from $ℂ^*$ to $Y°$.
 
Generalising Shafarevich's conjecture, Viehweg predicted that the moduli stack
of canonically-polarised manifolds is not only algebraically Brody hyperbolic
but that it is Brody hyperbolic.  More precisely, a smooth quasi-projective
variety $Y°$ admitting a generically finite morphism $μ: Y° → \mathfrak M$, must
be Brody hyperbolic.  This conjecture was settled by Viehweg and Zuo
in~\cite{VZ03}.  On the other hand, a long-standing conjecture of Lang predicts
that for a quasi-projective $Y°$, Kobayashi hyperbolicity (which is equivalent
to Brody hyperbolicity for projective varieties) implies that all subvarieties
of $Y°$, including $Y°$, are of log-general type.  In the light of Lang's
problem, Viehweg extended his question on the hyperbolic nature of the moduli
stack of canonically-polarised manifolds to the following conjecture.

\begin{conjecture}[Viehweg's hyperbolicity conjecture]\label{conj:Vhypo}
  Let $Y°$ be a smooth quasi-projective variety admitting a generically finite
  morphism $μ: Y°→ \mathfrak M$.  Then, the smooth compactification $(Y,D)$ of
  $Y°$ is of log-general type.
\end{conjecture}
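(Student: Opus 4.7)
The strategy is to combine two deep inputs: the Viehweg--Zuo construction of pluri-log-differential forms induced by variation, which I take as given, and the Campana--Păun generic semipositivity theorem, whose proof is the topic of the remainder of this chapter.

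\medskip

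\noindent\emph{Step 1 (Viehweg--Zuo input).} First I would fix a smooth projective compactification $Y$ of $Y°$ so that $D := Y \setminus Y°$ is reduced with simple normal crossings support. Since $μ \colon Y° → \mathfrak M$ is generically finite, the family of canonically polarised manifolds pulled back to $Y°$ has maximal variation, and the theorem of Viehweg and Zuo then supplies a positive integer $m$ together with an invertible subsheaf
\[
  \sA \hookrightarrow \Sym^m Ω^1_Y(\log D)
\]
whose Kodaira--Iitaka dimension equals $\dim Y$; equivalently $\sA$ is a big line bundle on $Y$.

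\medskip

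\noindent\emph{Step 2 (Contradiction via Campana--Păun).} I would then argue by contradiction: assume that $K_Y+D$ is not big. Combined with the Boucksom--Demailly--Păun--Peternell duality between pseudo-effective divisors and movable classes, the non-bigness of $K_Y+D$ produces movable classes $α$ on $Y$ with $(K_Y+D) · α$ arbitrarily small (or non-positive, after passing to a suitable birational model). The Campana--Păun generic semipositivity theorem, applied to the log-smooth pair $(Y,D)$, yields a slope estimate of the schematic form
\[
  μ_α(\sF) \leq m · (K_Y+D) · α
\]
for every rank-one subsheaf $\sF ⊂ \Sym^m Ω^1_Y(\log D)$ and every movable class $α$. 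The bigness of $\sA$ forces $\sA · α > 0$ uniformly on the cone of movable classes, which is incompatible with the right-hand side becoming arbitrarily small. This is the desired contradiction, and hence $K_Y+D$ is big, i.e.\ $(Y,D)$ is of log-general type.

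\medskip

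\noindent\emph{Step 3 (Main obstacle).} The principal technical obstacle is the proof of the Campana--Păun slope inequality in the logarithmic/pair setting. Passing from Miyaoka's classical generic semipositivity of the cotangent sheaf of a non-uniruled variety to the statement for subsheaves of symmetric powers of $Ω^1_Y(\log D)$ requires the Boucksom--Demailly--Păun--Peternell duality characterisation of pseudo-effective classes, reductions via the minimal model program to handle contributions from the boundary divisor $D$ and from singularities of intermediate models, and a careful book-keeping of slopes under symmetric-power and tensor operations on the log-cotangent sheaf. Once the Campana--Păun theorem is available, the reduction of Viehweg's hyperbolicity conjecture sketched in Steps~1 and 2 is comparatively formal.
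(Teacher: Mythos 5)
Your Step~1 is exactly the paper's first input (Theorem~\ref{thm:VZ}), and your overall two-step architecture (Viehweg--Zuo plus Campana--P\u{a}un) is the one used here. The divergence, and the gap, is in Step~2. The semipositivity theorem that this chapter actually proves (Theorem~\ref{thm:logCP13}, resp.\ Theorem~\ref{thm:orbSemPos}) gives slope bounds only with respect to polarisations of the form $[H]^{n-1}$ with $H$ ample, or $\gamma^*(\text{ample})$ on an adapted cover (extended to nef classes in Remark~\ref{rem:polbynef}); it says nothing about arbitrary movable classes. Non-bigness of $K_Y+D$, via Boucksom--Demailly--P\u{a}un--Peternell duality, only produces a movable class $\alpha$ with $(K_Y+D)\cdot\alpha\le 0$, and such an $\alpha$ is in general neither a complete-intersection class $H^{n-1}$ nor a limit of such with controlled normalisation; indeed, for a pseudo-effective, non-big $K_Y+D$ one typically has $(K_Y+D)\cdot H^{n-1}>0$ for \emph{every} ample $H$, so no contradiction can be extracted from ample polarisations alone. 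The slope estimate you invoke ``for every movable class $\alpha$'' is the strictly stronger result of \cite{CP15} (pseudo-effectivity of torsion-free quotients), which the chapter explicitly says renders the MMP redundant --- so your route is essentially the CP15 route, not one supported by the theorem proved here. A second, related gap: Theorems~\ref{thm:logCP13} and \ref{thm:orbSemPos} require $K_Y+D$ (or a perturbation of it) to be pseudo-effective as a hypothesis, while your contradiction hypothesis is only non-bigness; the case where $K_Y+D$ fails to be pseudo-effective is not addressed in your sketch. The paper secures this through Lemma~\ref{lem:perturb}, which provides divisors $B_D$ and klt, big $G_m\sim_{\bQ} D+\frac{1}{m}B_D$ with $K_X+D+\frac{1}{m}B_D$ pseudo-effective.

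What the paper does instead of the BDPP argument is the following (proof of Theorem~\ref{thm:CP-hypo}): run the minimal model program of \cite{BCHM10} for the klt pairs $(X,G_m)$ to reach models $(X_m',G_m')$ with $K_{X_m'}+G_m'$ nef; pass to a common resolution $\wtilde X_m$, apply Theorem~\ref{thm:orbSemPos} on an adapted cover of $\bigl(\wtilde X_m,\Delta_m\bigr)$ with the nef and big polarisation $\wtilde\pi_m^*\bigl(K_{X_m'}+G_m'+\frac{1}{r}H_m\bigr)$ to obtain the intersection inequality of Claim~\ref{claim:keyineq0}; then convert this into the volume bound of Claim~\ref{claim:keyineq} and, via Teissier's inequality, into $\vol\bigl(K_X+D+\frac{1}{m}B_D\bigr)\ge c^{-n}\vol(A)>0$, uniformly in $m$, so that letting $m\to\infty$ gives bigness of $K_X+D$. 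If you want to keep your Step~2 as written, you must either import the stronger theorem of \cite{CP15} or replace the duality argument by this MMP-plus-Teissier mechanism; as it stands, the step does not follow from the semipositivity statement this chapter establishes.
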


Viehweg's conjecture has attracted the interest of many algebraic geometers for
a long time.  We refer the reader to the survey \cite{Keb13a} for more details,
including references to earlier results that are not mentioned here for lack of
space.

\subsection{Viehweg's hyperbolicity conjecture according to Viehweg-Zuo and Campana-Păun}
\approvals{Behrouz & yes \\ Benoît & yes \\ Stefan & yes}

A general strategy to prove Conjecture~\ref{conj:Vhypo} consists of two main
steps.  Combining deep results of analytic \cite{Zuo00}, algebraic
\cite{Viehweg83} and Hodge theoretic \cite{Gri84} nature, Viehweg and Zuo
construct in a first step a subsheaf of the sheaf of pluri-log differential
forms of the base whose birational positivity captures the variation\footnote{A
  family $f°: X° → Y°$ of canonically-polarised manifolds is said to have
  \emph{maximal variation} if the moduli map $Ψ(f°):Y° → \mathfrak{M}$ is
  generically finite.} in the family.

\begin{thm}[\protect{Existence of pluri-logarithmic forms in the base, cf.~\cite[Thm.~1.4]{VZ02}}]\label{thm:VZ}
  If the smooth family of canonically-polarised manifolds $f°$ has maximal
  variation, then there exist a positive integer $N$ an invertible subsheaf
  $\sL⊆ \Sym^N\bigl(Ω^1_Y\log (D)\bigr)$ such that $κ(Y,\sL) = \dim Y$.
\end{thm}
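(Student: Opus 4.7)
The plan is to produce a big invertible subsheaf of $\Sym^N\Omega^1_Y(\log D)$ by combining Viehweg's weak positivity for direct images with Zuo's Hodge-theoretic negativity result for kernels of iterated Kodaira--Spencer maps. First, I would replace $f°$ by a semistable model: after a generically finite base change $Y' \to Y$ ramified over $D$, followed by a log resolution and a blow-up of $X$, we may assume that $f$ extends to a flat family $f\colon X\to Y$ with $(Y,D)$ snc and unipotent local monodromies around each component of $D$. Viehweg's weak positivity theorem then ensures that $\sF_\nu := f_*\omega_{X/Y}^{\otimes \nu}$ is weakly positive for all sufficiently divisible $\nu$, and maximal variation of the family, combined with Viehweg's fibre-product trick, upgrades this to bigness of $\det \sF_\nu$ for some $\nu \gg 0$. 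In particular $\sF_\nu$ gives rise to a big line bundle $\sA$ on $Y$ (after a further modification if necessary), together with a distinguished non-zero section of a large power of $\sA^{*}\otimes \sF_\nu$.

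Next, following Viehweg--Zuo, I would use such a section to construct a cyclic cover $Z \to X$. After further log resolution and semistable reduction, one obtains a new family $h\colon Z\to Y$ with snc boundary $D$ and unipotent monodromies, whose middle cohomology $R^{n}h_*\bC_Z$ carries a logarithmic variation of Hodge structures. The Deligne extension $\sE = \bigoplus_{p+q=n} \sE^{p,q}$ admits, by construction, an inclusion $\sA \hookrightarrow \sE^{n,0}$ into the top Hodge piece. Denoting the graded pieces of the Higgs field by
\[
  \theta^{p,q}\colon \sE^{p,q} \longrightarrow \sE^{p-1,q+1}\otimes \Omega^1_Y(\log D),
\]
one iterates to obtain, for every integer $N\le n$, a morphism
\[
  \theta^{(N)}\colon \sA \longrightarrow \sE^{n-N,N}\otimes \Sym^N\Omega^1_Y(\log D).
\]

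Let $N$ be the largest integer for which $\theta^{(N)}$ is non-zero. By maximality of $N$, the image of $\theta^{(N)}$ lies, under the adjunction $\Hom\bigl(\sA,\sE^{n-N,N}\otimes \Sym^N\Omega^1_Y(\log D)\bigr)\simeq \Hom\bigl(\sA\otimes \Sym^N T_Y(-\log D), \sE^{n-N,N}\bigr)$, in the kernel of $\theta^{n-N,N}$. Choose a saturated rank-one subsheaf $\sN \subseteq \sE^{n-N,N}$ containing a generic direction of this image. The analytic heart of the proof is then Zuo's negativity theorem: every invertible subsheaf of $\ker(\theta^{p,q})$ has pseudo-effective dual. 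Applied to $\sN$, this shows that $\sN^{*}$ is pseudo-effective, and the induced injection $\sA \otimes \sN^{*} \hookrightarrow \Sym^N\Omega^1_Y(\log D)$ extracted from $\theta^{(N)}$ thus has big source. Its saturation inside $\Sym^N\Omega^1_Y(\log D)$ is the desired invertible subsheaf $\sL$ with $\kappa(Y, \sL) = \dim Y$.

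The principal obstacle is the Hodge-theoretic step. Zuo's negativity result is genuinely analytic: it is proved through curvature estimates on a suitably modified Hodge metric on $\sE$, the subtle point being control of this metric near the boundary $D$. All the manipulations of semistable reduction, log resolution, and cyclic covers serve precisely to put the VHS into a form where these estimates apply; by contrast Viehweg's weak-positivity input is, granted his fibre-product trick, essentially formal.
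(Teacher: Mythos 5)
You should first be aware that the paper does not prove Theorem~\ref{thm:VZ} at all: it is quoted from Viehweg--Zuo \cite[Thm.~1.4]{VZ02} and used as a black box (the chapter's own content is the Campana--P\u{a}un semipositivity theorem and its application). So your proposal can only be measured against the original Viehweg--Zuo argument, and at the level of architecture it does follow it: weak positivity and bigness of $\det f_*\omega_{X/Y}^{\otimes\nu}$ under maximal variation, a cyclic cover constructed from the resulting big line bundle $\sA$, the logarithmic Higgs bundle of the new family's variation of Hodge structure, iteration of the Higgs field applied to $\sA$, the maximality trick placing the last nonzero iterate in $\ker\theta$, and Zuo's negativity of that kernel. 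These are the right ingredients in the right order.

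As a proof, however, the sketch has gaps at exactly the two places where \cite{VZ02} is genuinely hard. First, the boundary bookkeeping: the cyclic cover and the subsequent log/semistable reductions (and your initial base change $Y'\to Y$ to force unipotent monodromy) introduce new degeneration divisors and even change the base, so the iterated Kodaira--Spencer maps a priori land in $\Sym^N\Omega^1_{Y}\bigl(\log (D+S)\bigr)$ for an enlarged divisor $S$, or live on a cover of $Y$; producing a subsheaf of $\Sym^N\Omega^1_Y(\log D)$ on $Y$ itself requires the specific comparison and descent arguments of Viehweg--Zuo and cannot simply be asserted. Second, the final extraction of $\sL$ does not work as written: the adjoint map $\sA\otimes\Sym^N\sT_Y(-\log D)\to\ker(\theta^{n-N,N})$ may well have image of rank at least two, in which case no rank-one subsheaf $\sN$ can ``contain a generic direction of this image'' in any sense that makes $\theta^{(N)}$ factor through $\sN\otimes\Sym^N\Omega^1_Y(\log D)$; without that factorisation the claimed injection $\sA\otimes\sN^{*}\hookrightarrow\Sym^N\Omega^1_Y(\log D)$ does not exist, since $\sN$ is a subsheaf, not a quotient, of $\sE^{n-N,N}$. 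One needs an actual argument here --- for instance dualising to a nonzero map $\ker(\theta)^{\vee}\to\sA^{-1}\otimes\Sym^N\Omega^1_Y(\log D)$ and exploiting that quotients of the weakly positive sheaf $\ker(\theta)^{\vee}$ are again weakly positive, followed by further work to reach a rank-one subsheaf of the symmetric power of Kodaira dimension $\dim Y$; this is precisely where the statement is delicate, and it is worth noting that the application in this paper, Theorem~\ref{thm:CP-hypo}, only requires a subsheaf of the tensor power $\bigotimes^N\Omega^1_X(\log D)$, which is easier to produce. The Zuo-negativity step itself is fine as you use it: a saturated rank-one subsheaf of the kernel has weakly positive, hence pseudo-effective, dual, being generically a quotient of the dual of the kernel.
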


Theorem~\ref{thm:VZ} immediately resolves the original conjecture of
Shafarevich.  The goal in the second step is to trace a connection between the
birational positivity (bigness) of $\sL$ in Theorem~\ref{thm:VZ} and that of
$K_Y+D$, thus resolving Conjecture~\ref{conj:Vhypo}.  Working along these lines,
the second author and Kovács established Conjecture~\ref{conj:Vhypo} for moduli
stacks of dimension two and three, \cite{KK08, KK10} and see \cite{Keb13a} for
an overview.  The work relied, among other things, on the log-abundance theorem
for surfaces and threefolds.  In the absence of these methods in higher
dimensions, for instance a complete solution to the abundance problem, Campana
and Păun devised an additional tool, namely a vast generalisation of the famous
generic semipositivity result of Miyaoka to the context of pairs with rational
coefficients.  Here, we state their result in its simplest form and we refer the
reader to Section~\ref{sect:semipositivity} for a general statement.
   
\begin{thm}[\protect{Logarithmic generic semipositivity, cf.~\cite[Thm.~2.1]{CP13}}]\label{thm:logCP13}
  Let $(X,D)$ be a reduced, projective, snc pair.  If $K_X+D$ is
  pseudo-effective, then for every ample divisor $H$ on $X$ and every torsion
  free quotient $\sQ$ of $Ω^1_X(\log D)$ we have
  $$
  c_1(\sQ)·[H]^{n-1}≥ 0.  \qed
  $$
\end{thm}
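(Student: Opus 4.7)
The proposed approach is by contradiction, using the characterisation of the pseudo-effective cone of divisors as the dual of the closed cone of movable curve classes (Boucksom--Demailly--Păun--Peternell). Thus, assuming that some torsion-free quotient $\sQ$ of $Ω^1_X(\log D)$ violates $c_1(\sQ) \cdot [H]^{n-1} ≥ 0$, it suffices to produce a movable curve class $α$ on $X$ with $(K_X+D) \cdot α < 0$, contradicting the pseudo-effectivity of $K_X+D$.

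After replacing $\sQ$ by the minimal $μ_{H^{n-1}}$-destabilising quotient, let $\sF ⊆ Ω^1_X(\log D)$ be the corresponding maximal destabilising subsheaf; it is saturated, has torsion-free quotient, and satisfies $c_1(\sF) \cdot [H]^{n-1} > 0$. Dualising, its annihilator $\sG ⊆ T_X(-\log D)$ is a subsheaf whose determinant pairs strictly positively with $[H]^{n-1}$. An application of Bogomolov's integrability theorem, adapted to the logarithmic setting, would then show that $\sG$ is involutive, so that $\sG$ defines a foliation on the pair $(X,D)$.

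The next step is to upgrade the numerical positivity of this foliation into an actual covering family of rational curves. Restricting to a general complete intersection curve cut out by sufficiently high multiples of $H$, Mehta--Ramanathan ensures that $\sG$ remains positive when restricted, and in fact ample after a suitable twist. A logarithmic form of the Bogomolov--McQuillan algebraicity theorem then yields that the leaves of $\sG$ are algebraic and rationally connected. The resulting covering family $\{C_t\}$ of rational curves on $X$ is tangent to $\sG$, so $(K_X+D)\cdot C_t = -\deg(\sG|_{C_t}) < 0$ for general $t$. Averaging over this family produces a movable class $α$ with $(K_X+D) \cdot α < 0$, completing the contradiction.

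The principal difficulty I anticipate lies in the algebraicity and rational connectedness of the leaves in the presence of the boundary divisor $D$. Bogomolov's integrability theorem and the Bogomolov--McQuillan algebraicity result were originally formulated for smooth projective varieties without boundary; their extension to snc pairs requires careful control of how the foliation interacts with $D$, and one must verify that the rational curves produced have controlled intersection with $D$ so that the sign of $(K_X+D)\cdot C_t$ can be read off from the positivity of $\sG$. A secondary technical point is the reconciliation of the polarisation-dependent Harder--Narasimhan filtration with the global movable class constructed at the end; this typically calls for an asymptotic choice of multiples of $H$ in the complete-intersection construction, together with a limiting argument to pass from the destabilising data on a general curve back to a genuine movable class on $X$.
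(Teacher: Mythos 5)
Your opening moves --- arguing by contradiction, passing from the destabilising quotient $\sQ$ to a subsheaf $\sG$ of $\sT_X(-\log D)$ of positive degree, proving involutivity by a slope computation, and using ampleness along a general Mehta--Ramanathan complete intersection curve to make the leaves algebraic --- coincide with the paper's Theorem~\ref{thm:exMor} in the special case of a reduced boundary, where no adapted cover is needed. Two details there need repair but are fixable: the kernel of the minimal destabilising quotient of $\Omega^1_X(\log D)$ need not have positive degree (you never use this claim), and the slope argument for involutivity requires $\mu^{\min}_H(\sG) > \mu^{\max}_H\bigl(\sT_X(-\log D)/\sG\bigr)$, which holds for the \emph{maximal destabilising subsheaf} of $\sT_X(-\log D)$ but is not automatic for the dual of the minimal destabilising quotient; take $\sG$ to be the maximal destabilising subsheaf, exactly as the paper does.

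The genuine gap is the final numerical step. The identity $(K_X+D)\cdot C_t = -\deg(\sG|_{C_t})$ is wrong unless $\sG$ is all of $\sT_X(-\log D)$: one has $-(K_X+D) = c_1(\sG) + c_1\bigl(\sT_X(-\log D)/\sG\bigr)$ up to the usual reflexive-hull caveats, and nothing in your construction controls the degree of the quotient term on the curves you produce. Rational connectedness of the leaf closures (which the Bogomolov--McQuillan/KST results do give) does not rescue this: a covering family of rational curves tangent to $\sG$ is perfectly compatible with $K_X+D$ being pseudo-effective once a boundary is present --- already on $\bigl(\bP^1, \{0\}+\{\infty\}\bigr)$ the log canonical divisor is trivial while the whole space is a rational curve --- because the curves may meet $D$ and may intersect $\det\bigl(\sT_X(-\log D)/\sG\bigr)$ positively; the shortcut ``covered by rational curves, hence $K_X$ not pseudo-effective'' is special to $D=0$. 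This missing control of the quotient direction is exactly what the paper supplies by a different mechanism: the algebraic leaves define an essentially equidimensional map $\psi : X \dasharrow Z$ with $\sT_{X/Z} = \sG^{\sat}$, Lemma~\ref{lem:relTx} converts the destabilising inequality into $[K_{X/Z} + D^{horiz} - \Ramification \psi]\cdot C_t < 0$, and Theorem~\ref{thm:neat} --- proved via Viehweg--Fujino weak positivity of direct images of twisted relative dualising sheaves (Theorem~\ref{thm:pdis}), not via rational curves --- shows that this intersection number must be $\geq 0$, giving the contradiction. Without an ingredient of this kind, i.e.\ positivity of $K_{X/Z}+D^{horiz}-\Ramification\psi$ against your movable curves, the argument does not close.
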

   
Despite its importance, we found the paper \cite{CP13} rather hard to read.
This chapter is meant to serve as an exposition of Campana and Păun's proof of
Theorem~\ref{thm:logCP13} and its application to resolving
Conjecture~\ref{conj:Vhypo}.

\subsection{Structure of the current chapter}
\approvals{Behrouz & yes \\ Benoît & yes \\ Stefan & yes}

In Section~\ref{sect:defs} we gather some preliminary definitions and notions
that are used throughout this chapter.  In Section~\ref{sect:log-diffs} we
review some of the basics of the theory of orbifolds.  In
Section~\ref{sect:fractional} we delve deeper into some technical details that
will be crucial to the proof of the generic semipositivity result in
Section~\ref{sect:failure}.  In Section~\ref{sect:semipositivity} we state
Theorem~\ref{thm:logCP13} in its full generality.  Section~\ref{sect:hyperbol}
sketches the proof of Conjecture~\ref{conj:Vhypo} using this result.  Part~II is
devoted to the proof of the semipositivity result of Campana and Păun.

\subsection{A note on further results}
\approvals{Behrouz & yes \\ Benoît & yes \\ Stefan & yes}

Constructing degenerate Kähler-Einstein metrics, Campana and Păun have
established a second proof of Theorem~\ref{thm:orbSemPos} that works for Kähler
manifolds, \cite{CP14}.

More recently, they strengthened Theorems~\ref{thm:logCP13} and
\ref{thm:orbSemPos} also in another direction, by proving the pseudo-effectivity
of torsion free quotients, \cite{CP15}.  This latter result is specially
significant for the proof of Viehweg's conjecture, as it makes the LMMP methods
redundant.  For a concise exposition of~\cite{CP15} and its application to
Viehweg's problem we refer the reader to the first author's notes written for
the Bourbaki seminar, \cite{Claudon15}.
  
In a slightly different, but closely related, direction a more general version
of Viehweg's conjecture, that is perhaps closer to the spirit of the original
conjecture of Shafarevich, was formulated by Campana.  In this conjecture
Campana proposed the so-called \emph{special} varieties as higher dimensional
analogues of $ℂ$, $ℂ^*$, $ℙ^1$ and $E$ in Shafarevich conjecture.  We refer the
reader to the original paper of Campana, \cite{Cam04}, for the basic definitions
and background in the theory of special varieties.
  
\begin{conjecture}[The isotriviality conjecture]\label{conj:iso}
  Any smooth family of canonically-polarised manifolds $f°:X° → Y°$
  parametrised by a special quasi-projective variety $Y°$ is isotrivial.
\end{conjecture}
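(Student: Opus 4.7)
The plan is to argue by contrapositive: assume that $f°$ is not isotrivial, and produce a positive-dimensional fibration of $Y°$ onto an orbifold base of log-general type, which is exactly what the speciality hypothesis forbids.

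First, I would compose $f°$ with the moduli map to obtain $μ := Ψ(f°) : Y° → \mathfrak{M}$; non-isotriviality means $\dim μ(Y°) ≥ 1$. After Stein factorisation and a resolution, I may replace $μ$ by a surjective morphism $g : Y° → Z°$ with connected fibres onto a smooth positive-dimensional base along which the induced family has maximal variation. Passing to smooth compactifications $(Y,D)$ of $Y°$ and $(Z,E)$ of $Z°$, I would invoke the Viehweg-Zuo existence theorem (Theorem~\ref{thm:VZ}) on $Z$ to obtain a positive integer $N$ and a big invertible subsheaf $\sL ⊆ \Sym^N Ω^1_Z(\log E)$. Pulling back through $g$ and incorporating the multiple fibres of $g$ as an orbifold divisor $Δ$ on $Y$, one gets a big subsheaf of $\Sym^N Ω^1_Y(\log (D+Δ))$.

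Second, to convert this symmetric-differential information into an actual fibration onto a log-general-type orbifold, I would apply the strengthened Campana-Păun theorem (the pseudo-effectivity of torsion-free quotients from \cite{CP15}) together with a Bogomolov-type argument on the orbifold base $(Z,E+Δ_g)$, where $Δ_g$ records the multiplicities of $g$. Pseudo-effectivity of every torsion-free quotient of $Ω^1_Z(\log(E+Δ_g))$, combined with the bigness of $\sL$, should force $K_Z+E+Δ_g$ to be big, exhibiting $g$ as an orbifold fibration onto a base of log-general type. Since such a fibration is precisely the obstruction to $Y°$ being special in Campana's sense, we reach the desired contradiction.

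The hardest step is at the interface between the analytic/Hodge-theoretic Viehweg-Zuo construction, which naturally outputs sections of $\Sym^N Ω^1(\log)$, and Campana's specialness, which is phrased in terms of subsheaves of $Ω^p(\log)$ of Iitaka dimension $p$. Bridging this gap requires both a delicate choice of orbifold structure on $Z$ encoding the ramification and multiple fibres of $g$, and the full strength of \cite{CP15}; one must also check that this orbifold data transforms correctly under the birational modifications needed to arrange the snc hypothesis of Theorem~\ref{thm:logCP13}. A secondary subtlety is showing that the invariant ``variation'' behaves well under the Stein-type reduction to the maximal-variation case, which is where the orbifold language of Section~\ref{sect:log-diffs} becomes indispensable.
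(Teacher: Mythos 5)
This statement is a \emph{conjecture} in the paper, not a theorem with a proof. The paper simply states it and then remarks that ``following the strategy of Campana and Păun and by using the result of~\cite{MR2976311}, Conjecture~\ref{conj:iso} has been settled in~\cite{Taji16}.'' So there is no internal proof to compare against; what follows compares your sketch with the route the paper points to.

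Your outline captures the broad strategy (reduce via the moduli map, obtain a Viehweg--Zuo sheaf, convert its bigness into a fibration onto an orbifold base of log-general type, contradict speciality), but it omits the ingredient that the paper explicitly names as essential: the result of Jabbusch--Kebekus~\cite{MR2976311}, which refines Theorem~\ref{thm:VZ} by showing that the Viehweg--Zuo sheaf $\sL$ is contained not merely in $\Sym^N Ω^1_Y(\log D)$ but in the ``pull-back'' orbifold cotangent sheaf coming from the moduli map. Without this, the bridge between the VZ sheaf on $Y$ and the orbifold base $(Z, E+Δ_g)$ of $g$ has a genuine gap. Your proposed workaround—applying Theorem~\ref{thm:VZ} directly on $Z$ after Stein factorisation—does not work as written: $\mathfrak{M}$ is only a coarse moduli scheme, so there need not be a family of canonically polarised manifolds over $Z°$ (or any birational modification of it) to which VZ applies, and one cannot simply pass to a cover of $Z°$ without losing control of the speciality of $Y°$. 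This is precisely the difficulty that~\cite{MR2976311} resolves, and it should be named rather than elided. Two secondary remarks: the paper points to the generic semipositivity statement of~\cite{CP13} (Theorem~\ref{thm:orbSemPos} here) as the positivity input used in~\cite{Taji16}, combined with MMP-type volume estimates in the style of Theorem~\ref{thm:CP-hypo}, whereas you invoke~\cite{CP15}; the latter would likely also suffice (and removes the MMP dependence), but it is not the route the paper cites. Finally, the ``Bogomolov-type argument'' you allude to is doing a lot of unexplained work—the actual argument is quantitative (volume estimates after running the MMP for a klt perturbation), and a Bogomolov-sheaf observation by itself only tells you $Y°$ has a Bogomolov sheaf, not that the specific orbifold divisor $K_Z+E+Δ_g$ is big.
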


Following the strategy of Campana and Păun and by using the result
of~\cite{MR2976311}, Conjecture~\ref{conj:iso} has been settled
in~\cite{Taji16}.  More recently, in~\cite{PS15}, Popa and Schnell have proved a
vast generalisation of Conjecture~\ref{conj:Vhypo} by extending
Theorem~\ref{thm:VZ} to smooth projective families of varieties of general type.
Where their strategy follows the same two-steps approach discussed above, the
main breakthrough in their result comes from an interesting use of the theory of
Hodge modules to extend some crucial Hodge theoretic tools used in~\cite{VZ03}.

\subsection{Acknowledgements}
\approvals{Behrouz & yes \\ Benoît & yes \\ Stefan & yes}

The authors owe a special thanks to Frédéric Campana and Mihai Păun for many
fruitful discussions.

%
%
\svnid{$Id: 02-definitions.tex 175 2016-10-25 09:36:41Z kebekus $}

\section{Definitions and Notation}
\label{sect:defs}
\subversionInfo
\approvals{Behrouz & yes \\ Benoît & yes \\ Stefan & yes}

In the current section we gather some very basic definitions and concepts needed
for the arguments in the later parts of this chapter.  For the more standard
definitions, we refer to~\cite{Ha77}.  The reader who is familiar with these
preliminaries may wish to skip Subsections~\ref{subsect:vars}
to~\ref{subsect:foliate} and move on Subsection~\ref{subsect:adapted}.  In this
chapter, all varieties are defined over $ℂ$.

\subsection{Varieties, subsets, sheaves and pairs}
\label{subsect:vars}
\approvals{Behrouz & yes \\ Benoît & yes \\ Stefan & yes}

Let us begin by introducing the most basic objects, recurrent throughout this
chapter.

\begin{notation}[Small and big sets]
  Let $X$ be a variety.  A subset $S ⊆ X$ is called \emph{small} if its Zariski
  closure satisfies $\codim_X \overline{S} ≥ 2$.  A subset $U ⊆ X$ is called
  \emph{big} if its complement is small.
\end{notation}

\begin{notation}[Families of curves on projective varieties]\label{not:curves}
  Let $X$ be a projective variety.  A family of curves is a smooth subvariety
  $T ⊆ \Hilb(X)$ whose associated subschemes $(C_t)_{t ∈ T}$ are reduced,
  irreducible and of dimension one.  We say that the family \emph{dominates $X$}
  if $∪_{t ∈ T} C_t$ is dense in $X$.  We say that the family \emph{avoids small
    sets} if, given any small set $S ⊂ X$, there exists a dense open $T° ⊂ T$
  such that $C_t ∩ S = ∅$, for all $t ∈ T°$.
\end{notation}

\begin{defn}[Pair]\label{def:E:1}
  A \emph{pair} $(X,Δ)$ consists of a normal variety $X$ and a $ℚ$-Weil divisor
  $Δ$ on $X$ with coefficients in $[0,1] ∩ ℚ$.  A pair $(X,Δ)$ is called
  \emph{snc} if $X$ is smooth and if the support of $Δ$ has simple normal
  crossings only.  We denote the maximal open subset of $X$ where $(X,Δ)$ is
  smooth by $(X,Δ)_{\snc}$.  Note that this is a big subset of $X$.  The
  fractional part of $Δ$ is written as $\{Δ\}$.
\end{defn}

\begin{notation}[Reflexive hull]
  Given a normal, quasi-projective variety $X$ and a coherent sheaf $\sE$ on
  $X$, write
  $$
  Ω^{[p]}_X := \bigl(Ω^p_X \bigr)^{**}, \quad \sE^{[m]} := \bigl(\sE^{⊗ m}
  \bigr)^{**} \quad\text{and}\quad \det \sE := \bigl( Λ^{\rank \sE} \sE
  \bigr)^{**}.
  $$
  Given any morphism $f : Y → X$, write $f^{[*]} \sE := (f^* \sE)^{**}$, etc.
\end{notation}

\subsection{Morphisms}
\approvals{Behrouz & yes \\ Benoît & yes \\ Stefan & yes}

Let us now briefly review some basic notions and properties of morphisms
between normal varieties.

\begin{construction}[Push-forward of Weil divisor]\label{cons:push}
  Let $f : X → Z$ be a morphism of normal varieties.  Recall from ``Zariski's
  Main Theorem in the form of Grothendieck''\footnote{See \cite{EGA4-3}
    Zariski's main theorem in the form of Grothendieck and
    \cite[Thm.~3.8]{GKP13} for the precise statement used here.  A full proof is
    found in the extended version of \cite{GKP13}, available on the arXiv.} that
  there exists a unique, normal variety $\overline{X}$ and a unique
  factorisation $f = β ◦ α$ as follows,
  $$
  \xymatrix{ %
    X \ar[rrr]^{α}_{\text{open immersion}} &&& \overline{X}
    \ar[rrr]^{β}_{\text{proper morphism}} &&& Z.  }
  $$
  Taking Zariski-closures yields a push-forward morphism $α_* : \WDiv(X) →
  \WDiv(\overline{X})$.  Composing with the standard push-forward morphism of
  the proper morphism $β$, cf.~\cite[I~Sect.~1.4]{Fulton98}, we obtain a map
  $f_* : \WDiv(X) → \WDiv(Z)$.
\end{construction}

\begin{rem}[Push-forward vs.\ linear equivalence]\label{rem:push}
  The map $f_*$ of Construction~\ref{cons:push} will in general not respect
  linear equivalence, unless one of the following folds.
  \begin{enumerate}
  \item The morphism $f$ is proper.
  \item\label{il:zzr} The variety $X$ is a big open subset of $Z$ and $f$ is the
    inclusion.  In this case, $f_*$ is an isomorphism.
  \end{enumerate}
\end{rem}

\subsubsection{Galois covers}
\approvals{Behrouz & yes \\ Benoît & yes \\ Stefan & yes}

As we will be working with $G$-sheaves, Galois morphisms are of particular
interest.

\begin{defn}[Cover and covering morphism]\label{def:E:cover}
  Finite, surjective morphisms $γ: Y → X$ between normal varieties $X$, $Y$ are
  called \emph{covers} or \emph{covering morphisms}.
\end{defn}

\begin{defn}[Galois morphism]\label{def:Galois}
  A covering map $γ : X → Y$ of normal varieties is called \emph{Galois} if
  there exists a finite group $G ⊂ \Aut(X)$ such that $γ$ is isomorphic to the
  quotient map.
\end{defn}

\begin{warning}[Galois $\not \Rightarrow$ étale]
  Definition~\ref{def:Galois} does not require $γ$ to be étale.  This will be of
  crucial importance for nearly everything that follows.
\end{warning}

\subsection{Equidimensional morphisms}
\approvals{Behrouz & yes \\ Benoît & yes \\ Stefan & yes}

In a normal variety, Weil divisors need not be Cartier.  Still, it is possible
to define a pull-back map, at least for equidimensional morphisms.

\begin{defn}[Equidimensional morphism]\label{def:equidim}
  Let $f : X → Z$ be a dominant morphism of varieties.  We say that $f$ is
  \emph{equidimensional} if there exists a number $d$ such for any $x ∈ X$,
  the associated fibre $f^{-1}f(x)$ is of pure dimension $d$.  The number $d$ is
  called \emph{relative dimension}.
\end{defn}

\begin{rem}[Preimages of big and small sets]\label{rem:piob}
  In the setting of Definition~\ref{def:equidim}, if $Z' ⊆ Z$ is any algebraic
  set, then $\codim_X f^{-1}(Z') ≥ \codim_Z Z'$.  In particular, preimages of
  small sets are small, and preimages of big sets are big.
\end{rem}

\begin{lem}[Equidimensional morphism and normalisation]
  Let $f : X → Z$ be a dominant, equidimensional morphism of varieties.  If $X$
  is normal and $Z'$ the normalisation of $Z$, then the natural morphism $f' : X
  → Z'$ is likewise equidimensional.
\end{lem}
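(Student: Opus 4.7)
The proof plan exploits the crucial fact that the normalisation morphism $\nu\colon Z'\to Z$ is finite, so every fibre of $\nu$ is a finite set of closed points. The existence of the factorisation $f = \nu\circ f'$ is granted for free by the universal property of normalisation applied to the dominant morphism $f$ from the normal variety $X$. Dominance of $f'$ follows at once: $\nu(f'(X)) = f(X)$ is dense in $Z$, and the unique preimage under $\nu$ of the generic point of $Z$ is the generic point of $Z'$, so $f'(X)$ contains this generic point and is dense in $Z'$.

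The heart of the argument is a set-theoretic decomposition of the fibres. Pick $x \in X$ and set $z' := f'(x)$ and $z := \nu(z') = f(x)$. Since $\nu^{-1}(z)$ is finite, one has the closed, pairwise disjoint decomposition
\[
f^{-1}(z) \;=\; \bigsqcup_{z''\in \nu^{-1}(z)} (f')^{-1}(z''),
\]
as subsets of $X$. By assumption the left-hand side has pure dimension $d$. Each irreducible component of $f^{-1}(z)$ therefore lies entirely in exactly one of the pieces, and each non-empty piece is a finite union of such components, hence of pure dimension $d$. Applying this observation to the piece indexed by $z'' = z'$ --- which contains $x$ and is therefore non-empty --- yields that $(f')^{-1}(z')$ is of pure dimension $d$, as required.

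I do not anticipate a genuine obstacle here; the content reduces to elementary dimension-theoretic bookkeeping. The only point that deserves explicit mention is that, although a finite disjoint union of closed subsets generally behaves poorly with respect to dimension, the decomposition above merely partitions the set of irreducible components of the equidimensional set $f^{-1}(z)$, which makes the conclusion automatic.
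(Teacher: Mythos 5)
Your argument is correct and is essentially the paper's proof: both exploit finiteness of the normalisation to see that the fibre $(f')^{-1}(f'(x))$ is a closed-and-open piece of $f^{-1}(f(x))$ --- the paper phrases this as a union of connected components, you as a union of irreducible components --- whence pure dimension $d$ is inherited. The bookkeeping with the disjoint decomposition and the existence of $f'$ via the universal property of normalisation matches what the paper leaves implicit.
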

\begin{proof}
  Recalling that the normalisation morphism $η : Z' → Z$ is finite, it follows
  that for any $x ∈ X$, the fibre $F' := (f')^{-1}f'(x)$ is a union of connected
  components of $F = f^{-1}f(x)$.  If $F$ is of pure dimension $d$, then so is
  $F'$.
\end{proof}

\subsubsection{Pull-back}
\approvals{Behrouz & yes \\ Benoît & yes \\ Stefan & yes}

We now explain the construction of pull-back maps for Weil divisors
in a normal variety.

\begin{construction}[Pull-back of Weil divisor]\label{cons:pull}
  Let $f : X → Z$ be an equidimensional morphism between normal varieties.  We
  define a pull-back morphism $f^* : \WDiv(Z) → \WDiv(X)$ as the composition of
  the following morphisms,
  \begin{multline*}
    \WDiv(Z) \xrightarrow[\text{by \ref{il:zzr} since $Z_{\reg}$ is big}]{\cong}
    \WDiv(Z_{\reg}) \xrightarrow{\cong} \CDiv( Z_{\reg})
    \xrightarrow{\bigl(f|_{f^{-1}(Z_{\reg})}\bigr)^*} \\
    \CDiv \bigl( f^{-1}(Z_{\reg}) \bigr) → \WDiv \bigl( f^{-1}(Z_{\reg}) \bigr)
    \xrightarrow[\text{by \ref{il:zzr} and Rem.~\ref{rem:piob}}]{\cong}
    \WDiv(X).
  \end{multline*}
  Since all morphisms respect linear equivalence, so does $f^*$.
\end{construction}

\subsubsection{Multiplicities, ramification and branch divisors}
\approvals{Behrouz & yes \\ Benoît & yes \\ Stefan & yes}

We briefly review various notions of multiplicities that appear in the following
sections.

\begin{defn}[Multiplicities]\label{defn:mult}
  Let $f : X → Z$ be an equidimensional morphism between normal varieties.  If
  $Δ ∈ \WDiv(X)$ is prime, define the \emph{multiplicity of $f$ along $D$} as
  $$
  \mult_Δ f := \mult_Δ f^* D, \quad \text{where} \quad D := (f_* Δ)_{\red}.
  $$
\end{defn}

\begin{rem}
  In Definition~\ref{defn:mult}, either $\supp Δ$ dominates $Z$, or
  $f_* Δ \ne 0$ and $\mult_Δ f ≥ 1$.
\end{rem}

\begin{defn}[Ramification and branch divisors]\label{def:ramificationandbranch}
  Let $f : X → Z$ be an equidimensional morphism between normal varieties.  The
  \emph{ramification} and \emph{branch divisor} of $f$ are defined as follows,
  \begin{align*}
    \Ramification f & := \sum_{\overset{Δ ∈ \WDiv(X)}{\text{prime}}} \max \bigl\{ 0, \mult_Δ f - 1 \bigr\} · Δ \\
    \Branch f & := \sum_{\qquad\mathclap{\overset{D ∈ \WDiv(Z)\text{ prime}}{D ≤ f_* \Ramification f}}\qquad} \lcm \{ \mult_Δ f^* D \,|\, Δ ⊆ \supp f^* D \text{ a prime div.} \} · D \\
    \OrbiBranch f & := \sum_{\qquad\mathclap{\overset{D ∈ \WDiv(Z)\text{ prime}}{D ≤ \Branch f}}\qquad} \frac{1- \mult_D \Branch f}{\mult_D \Branch f} · D
  \end{align*}
\end{defn}

\begin{rem}\label{rem:brram}
  In the setting of Definition~\ref{def:ramificationandbranch}, observe that
  $$
  f \bigl( \supp \Ramification f \bigr) ⊆ \supp \Branch f.
  $$
  If $f$ is proper, then equality holds.
\end{rem}

\subsubsection{Local normal form}
\approvals{Behrouz & yes \\ Benoît & yes \\ Stefan & yes}

In this section we give explicit description of equidimensional morphisms in a
suitably chosen local analytic coordinate system.

\begin{construction}[Local normal form]\label{cons:E:8}
  Let $f: X → Z$ be an equidimensional morphism of normal varieties, of relative
  dimension $d$.  Let $Z° ⊆ Z$ be the largest open set such that both $Z°$ and
  $Z° ∩ (\supp \Branch f)$ are smooth.  Let $X° ⊆ f^{-1}(Z°)$ be the largest
  open set such that both $X°$ and $X° ∩ (\supp \Ramification f)$ are smooth,
  and such that the following restrictions of $f$ are smooth morphisms,
  \begin{align*}
    X° \setminus (\supp \Ramification f) & → Z°\\
    X° ∩ (\supp \Ramification f) & → Z° ∩ (\supp \Branch f).
  \end{align*}
  Remark~\ref{rem:brram} ensures that the second map is defined.  Observe that
  both $Z°$ and $X°$ are big open subsets of $Z$ and $X$, respectively.  Let
  $\vec x ∈ X°$ be any point and $\vec z := f(\vec x) ∈ Z°$ be its image.

  If $\vec x$ is \emph{not} contained in the support of $\Ramification(f)$, then
  $f$ is smooth at $\vec x$.  If $z_0, …, z_n ∈ \sO_{Z, \vec z}$ are local
  holomorphic coordinates on $Z$ centred about $\vec z$, then
  $x_i := z_i ◦ f ∈ \sO_{X,\vec x}$ can be completed to a system of holomorphic
  coordinates on $X$ centred about $\vec x$.  In these coordinates, $f$ takes
  the form
  \begin{equation}\label{eq:lnf1}
    f : (x_0, …, x_n, x_{n+1}, …, x_{n+d}) \mapsto (x_0, …, x_n).
  \end{equation}

  If $\vec x$ \emph{is} contained in the support of $\Ramification(f)$, then
  there exists a holomorphic function $z_0 ∈ \sO_{Z,\vec z}$ which locally
  generates the ideal of the smooth hypersurface
  $\bigl(\supp \Branch (f)\bigr)$.  Near $\vec x$, there exists a holomorphic
  function $x_0 ∈ \sO_{X°,\vec x}$ such that $z_0 ◦ f = x_0^m$, where $m$ is the
  order of ramification of $f$ along the unique component of $\Ramification(f)$
  that contains $\vec x$.  Completing, we obtain holomorphic coordinate
  functions of the following form,
  $$
  z_0, …, z_n ∈ \sO_{Z, \vec z} \quad \text{and} \quad x_0,
  \underbrace{x_1}_{\mathclap{:= z_1 ◦ f}}, …, \underbrace{x_n}_{\mathclap{:=
      z_n ◦ f}}, x_{n+1}, …, x_{n+d} ∈ \sO_{X, \vec x}.
  $$
  In these coordinates, $f$ takes the form
  \begin{equation}\label{eq:lnf2}
    f : (x_0, x_1, …, x_n, x_{n+1}, …, x_{n+d}) \mapsto (x_0^m, x_1, …, x_n).
  \end{equation}
\end{construction}

\begin{notation}[Local normal form]\label{not:localNormalForm}
  In the setting of Construction~\ref{cons:E:8}, we refer to the explicit
  description of $f$ in \eqref{eq:lnf1} and \eqref{eq:lnf2} as \emph{local
    normal forms}.  We call $X°$ and $Z°$ the \emph{maximal open sets where $f$
    can locally be written in normal form}.  If $X° = X$, we say that \emph{$f$
    can locally be written in normal form}.
\end{notation}

\subsection{Rational maps}
\approvals{Behrouz & yes \\ Benoît & yes \\ Stefan & yes}

A certain class of rational maps and divisors appear naturally in the proof of
the semipositivity theorem, Theorem~\ref{thm:orbSemPos}.  The current section is
devoted to introducing these maps and reviewing some of their basic properties.

\begin{notation}[Domain of definition, preimages, connected fibres]
  Let $f : X \dasharrow Z$ be a rational map between varieties.  We denote the
  domain of definition of $\Defn(f) ⊆ X$ and write $f_{\Defn}$ for the morphism
  $f|_{\Defn(f)}$.  Given any subset $Z' ⊆ Z$, write $f^{-1}(Z')$ as a shorthand
  for $f_{\Defn}^{-1} (Z')$.  If $z ∈ Z$ is any point, call $X_z := f^{-1}(z)$
  the \emph{fibre over $z$}.  We say that $f$ \emph{has connected fibres} if
  $f_{\Defn}$ has connected fibres.
\end{notation}

\begin{notation}[Horizontal and vertical divisors]\label{not:decomphv}
  Let $f : X \dasharrow Z$ be a rational map between normal varieties.  If $Δ$
  is any prime divisor on $X$, observe that $Δ$ intersects $\Defn(f)$
  non-trivially.  Call $Δ$ \emph{horizontal} if $Δ∩\Defn(f)$ dominates $Z$,
  otherwise call it \emph{vertical}.  If $Δ$ is any $ℚ$-Weil divisor, there
  exists an associated decomposition $Δ = Δ^{horiz} + Δ^{vert}$.
\end{notation}

\begin{defn}\label{defn:esseqdim}
  Let $f : X \dasharrow Z$ be a rational map between normal varieties.  We say
  that $f$ is \emph{essentially equidimensional} if there exists an open set $U
  ⊆ \Defn(f)$ that is big in $X$ such that $f|_U$ is an equidimensional
  morphism.
\end{defn}

\begin{rem}
  Recall from Definition~\ref{def:equidim} that equidimensional morphisms (and
  hence essentially equidimensional maps) are dominant.
\end{rem}

\begin{defn}[Ramification and branch divisors for essentially equidimensional maps]\label{defn:RessEq}
  Let $f : X \dasharrow Z$ be an essentially equidimensional rational map
  between normal varieties as in Definition~\ref{defn:esseqdim}.  Define
  $$
  \Ramification f := \Ramification \bigl( f_{\Defn}|_U \bigr).
  $$
  Ditto for $\Branch f$ and $\OrbiBranch f$.
\end{defn}

\subsubsection{Pull-back and push-forward}
\approvals{Behrouz & yes \\ Benoît & yes \\ Stefan & yes}

In the rest of the current subsection we focus on the behaviour of cycles and
sheaves under $f_*$ and $f^*$, given that $f$ is a rational map between normal
varieties.

\begin{construction}[Push-forward for rational map]
  Let $f : X \dasharrow Z$ be a rational map between normal varieties.  Since
  $\Defn(f)$ is a big subset of $X$, we obtain a canonical identification
  $\WDiv \bigl( \Defn(f) \bigr) \cong \WDiv(X)$.  Construction~\ref{cons:push}
  therefore gives a push-forward map $f_* : \WDiv(X) → \WDiv(Z)$.
\end{construction}

\begin{construction}[Pull-back of sheaves]\label{cons:pbs}
  Let $f : X \dasharrow Z$ be a rational map between normal varieties.  If $\sF$
  is any coherent sheaf of $\sO_Z$-modules, write
  $$
  f^{[*]} \sF := \bigl(ι_* f_{\Defn}^* \sF \bigr)^{**},
  $$
  where $ι : \Defn(f) → X$ is the inclusion.  Since $\Defn(f)$ is a big, open
  set, this is a coherent, reflexive sheaf on $X$.
\end{construction}

\begin{construction}[Pull-back of divisors for essentially equidimensional map]\label{cons:pbwd}
  Let $f : X \dasharrow Z$ be an essentially equidimensional rational map.
  Item~\ref{il:zzr} of Remark~\ref{rem:push} gives a canonical identification
  $\WDiv(U) \cong \WDiv(X)$ that respects linear equivalence.
  Construction~\ref{cons:pull} therefore gives a pull-back map
  $f^* : \WDiv(Z) → \WDiv(X)$, which does not depend on the choice of $U$,
  respects linear equivalence, and therefore induces a morphism between divisor
  class groups.
\end{construction}

\begin{rem}[Pull-back for Weil divisorial sheaves]
  The pull-back Constructions~\ref{cons:pbs} and \ref{cons:pbwd} are compatible
  for Weil divisorial sheaves.  More precisely, if $f : X \dasharrow Z$ is any
  essentially equidimensional rational map between normal varieties and if
  $D ∈ \WDiv(Z)$, then $f^{[*]} \sO_Z(D) \cong \sO_X\bigl( f^* D \bigr)$.
\end{rem}

\subsubsection{Relative tangent sheaves}
\label{subsubsect:relative}
\approvals{Behrouz & yes \\ Benoît & yes \\ Stefan & yes}

The aim of this section is to establish an explicit description for the relative
canonical sheaf of an essentially equidimensional rational map.

\begin{construction}[Relative tangent sheaf]\label{constr:reltgt}
  Let $f : X \dasharrow Z$ be an essentially equidimensional rational map
  between normal varieties.  Recall from Remark~\ref{rem:piob} that there exists
  a big, open set $U ⊆ f^{-1}(Z_{\reg}) ∩ X_{\reg}$ such that
  $f|_U : U → Z_{\reg}$ is an equidimensional morphism.  Denote the inclusion by
  $ι: U → X$, consider the kernel
  $$
  \sT_{U/Z_{\reg}} := \ker \bigl( \sT_U → (f|_U)^* \sT_{Z_{\reg}}),
  $$
  and set $\sT_{X/Z} := ι_* \sT_{U/Z_{\reg}}$.  By construction, $\sT_{X/Z}$ is
  a reflexive subsheaf of $\sT_X$, and in fact a foliation (see
  Notation~\ref{not:foliation} below).  The sheaf $\sT_{X/Z}$ is independent of
  the choice of $U$.
\end{construction}

\begin{construction}[Relative canonical class]
  Let $f : X \dasharrow Z$ be an essentially equidimensional rational map.
  Construction~\ref{cons:pbwd} allows to define the relative canonical class as
  $[K_{X/Z}] := [K_X] - f^* [K_Z] ∈ \Cl(X)$.
\end{construction}

\begin{lem}[Determinant of relative tangent sheaf]\label{lem:relTx}
  In the setting of Construction~\ref{constr:reltgt},
  $$
  \det \sT_{X/Z} \cong \sO_X \bigl( - K_{X/Z} + \Ramification f \bigr).
  $$
\end{lem}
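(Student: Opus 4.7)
Since both sides of the claimed isomorphism are reflexive sheaves of rank one on the normal variety $X$, it suffices to exhibit an isomorphism on a big open subset and then appeal to reflexivity. I would work on the intersection of the big open subset $U$ on which $f$ is equidimensional (from Construction~\ref{constr:reltgt}) with the big open subset from Construction~\ref{cons:E:8} on which $f$ admits a local normal form, and extract the isomorphism from the exact sequence defining $\sT_{X/Z}$.

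Away from $\supp \Ramification f$, the morphism $f$ is smooth, and the standard relative tangent sequence
$$
0 \to \sT_{X/Z} \to \sT_X \to f^* \sT_Z \to 0
$$
is exact. Taking determinants immediately gives
$$
\det \sT_{X/Z} \cong \det \sT_X \otimes f^*(\det \sT_Z)^{-1} \cong \sO_X(-K_X + f^* K_Z) = \sO_X(-K_{X/Z}),
$$
in agreement with the claim since $\Ramification f$ vanishes on this open set.

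The main step is the analysis near a prime component $\Delta \subseteq \supp \Ramification f$. Using a local normal form \eqref{eq:lnf2} with $m := \mult_\Delta f$, a direct calculation shows that the image $\sI$ of $\sT_X \to f^* \sT_Z$ is freely generated over $\sO_X$ by $m x_0^{m-1}\,\partial/\partial z_0$ and by $\partial/\partial z_1, \ldots, \partial/\partial z_n$; hence $\sI$ is locally free of full rank with $\det \sI \cong f^* \det \sT_Z \otimes \sO_X\bigl(-(m-1)\Delta\bigr)$ in a neighbourhood of the chosen point. The induced short exact sequence $0 \to \sT_{X/Z} \to \sT_X \to \sI \to 0$ then yields
$$
\det \sT_{X/Z} \cong \det \sT_X \otimes (\det \sI)^{-1} \cong \sO_X\bigl(-K_{X/Z} + (m-1)\Delta\bigr),
$$
which matches $\sO_X(-K_{X/Z} + \Ramification f)$ locally, since only $\Delta$ contributes to $\Ramification f$ in this chart.

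The main obstacle is the patching: one has to check that the local isomorphisms obtained above, on the smooth locus of $f$ and in a neighbourhood of each prime component of $\Ramification f$, fit together into a single global isomorphism of line bundles on the chosen big open set. This is straightforward because each local isomorphism is canonical, arising from the determinant functor applied to a given exact sequence, so the transition functions agree up to $\sO_X^*$ on overlaps; the resulting global isomorphism on the big open set then extends uniquely to the reflexive hulls on all of $X$.
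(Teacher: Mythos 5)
Your proposal is correct and follows essentially the same route as the paper: both sides are reflexive, so one reduces to the big open set where $f$ admits a local normal form and concludes by the elementary coordinate computation with \eqref{eq:lnf1} and \eqref{eq:lnf2}, which you merely spell out in more detail via the exact sequence $0 \to \sT_{X/Z} \to \sT_X \to \sI \to 0$. The patching concern resolves itself because that exact sequence and the inclusion $\det \sI \subseteq f^*\det\sT_Z$ are defined globally on the big open set, so the determinant isomorphism is canonical there (not merely up to units) and extends by reflexivity.
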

\begin{proof}
  Since both sides of the equation are reflexive sheaves, it suffices to show
  equality on the big, open subset of $U$ where the morphism $f|_U$ can locally
  be written in normal form.  There, the claim follows from an elementary
  computation in coordinates.
\end{proof}

\subsection{Foliations}
\label{subsect:foliate}
\approvals{Behrouz & yes \\ Benoît & yes \\ Stefan & yes}

The notion of a foliation being transversal to a divisor is a recurrent theme in
this chapter.  Let us briefly spell out what is meant by this.

\begin{notation}[Foliation]\label{not:foliation}
  Let $X$ be a normal variety.  A \emph{foliation} is a saturated subsheaf
  $\sT_X$, whose restriction to $X_{\reg}$ is closed under the Lie-bracket.
\end{notation}

\begin{rem}
  In the setting of Notation~\ref{not:foliation}, recall that the tangent sheaf
  $\sT_X := \bigl( Ω^1_X \bigr)^{*}$ is reflexive.  As a saturated subsheaf, the
  foliation $\sF$ is likewise reflexive.
\end{rem}

\begin{notation}\label{not:ONeil}
  For any saturated subsheaf $\sF$ of $\sT_X$, the Lie-bracket, which is defined
  only on $X_{\reg}$, induces an $\sO_{X}$-linear map
  $$
  N: \sF^{[2]} → (\sT_X/\sF)^{**}
  $$
  known as the O'Neil tensor.  Vanishing of $N$ characterises then $\sF$ as
  being a foliation.
\end{notation}

\begin{notation}[Divisors generically transveral to a foliation]\label{not:genTrans}
  In the setting of Notation~\ref{not:foliation}, there exists a big open set
  $U ⊆ X$, contained in $X_{\reg}$ where $\sF$ is a subvectorbundle of $\sT_X$.
  If $D ⊂ X$ is any prime divisor, then $\sF$ is as subvectorbundle of $\sT_X$
  near general points $x ∈ D$.  In particular, one can check whether $\sF$ is
  transversal to $D$ at $x$.  This allows to decompose any $ℚ$-Weil divisor $Δ$
  as $Δ = Δ^{trans} + Δ^{ntrans}$, where $Δ^{trans}$ consists of those
  components that are generically transversal to $\sF$.
\end{notation}

\begin{rem}
  In the setting of Construction~\ref{constr:reltgt}, the decompositions of
  $ℚ$-Weil divisors given in Notations~\ref{not:decomphv} and
  \ref{not:genTrans} agree.  More precisely, if $Δ$ is any $ℚ$-Weil divisor on
  $X$, then $Δ^{horiz} = Δ^{trans}$ and $Δ^{vert} = Δ^{ntrans}$.
\end{rem}

\subsection{Adapted morphisms}
\label{subsect:adapted}
\approvals{Behrouz & yes \\ Benoît & yes \\ Stefan & yes}

In this section we introduce a class of morphisms, called ``adapted'', that is
indispensable to even formulate our main result.

\begin{defn}[Adapted and strongly adapted cover cover]\label{def:E:5}
  Let $(X, Δ)$ be a pair and decompose $Δ$ into irreducible components,
  $Δ = \sum δ_j D_j$.  Let $γ: Y → X$ be an essentially equidimensional morphism
  to a normal variety $Y$.  The morphism $γ$ is called \emph{adapted to $(X,Δ)$}
  if $γ^* Δ$ is an integral divisor.  The morphism $γ$ is called \emph{strongly
    adapted to $(X,Δ)$} if for any index $j$ with $δ_j ∈ (0,1)$, written as
  $δ_j = \frac{a_j}{b_j}$ with $a_j$, $b_j ∈ ℕ$ coprime, and any Weil divisor
  $E \subsetneq Y$, we have
  $$
  \mult_E (γ^*D_j) ∈ \{ 0, b_j\}.
  $$
  In other words, if $E$ appears in $γ^*D_j$ at all, then its multiplicity must
  be $b_j$ precisely.
\end{defn}

\subsubsection{Existence}
\approvals{Behrouz & yes \\ Benoît & yes \\ Stefan & yes}

The existence of adapted morphisms, when $(X,Δ)$ is an snc pair, was established
by Kawamata, cf.~\cite[Prop.~4.1.12]{Laz04-I}.  Here we briefly review the case
where $(X,Δ)$ is \emph{not} snc.  See~\cite[Subsect.~1.2]{CP13} for an
alternative construction.

\begin{prop}[Existence of strongly adapted, Galois covers]\label{prop:exsam}
  Let $(X, Δ)$ be a pair.  Then, there exists a cyclic Galois cover $γ: Y → X$
  that is strongly adapted to $(X,Δ)$.
\end{prop}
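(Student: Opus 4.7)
The plan is to produce $\gamma$ as the normalization of a classical cyclic cover of $X$ of degree equal to the least common multiple of the denominators appearing in $\Delta$. First I write $\Delta = \sum_{j=1}^{r} (a_j/b_j) D_j + (\text{components with coefficient } 0 \text{ or } 1)$, keeping the indices with $\delta_j \in (0,1)$, and in each case $\gcd(a_j, b_j) = 1$ and $b_j \geq 2$. Set $N := \lcm(b_1, \ldots, b_r)$ and form the integral effective Weil divisor $D := \sum_{j=1}^{r} (N/b_j)\,D_j$.

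Next I would construct a section of an $N$-th power that realizes $D$ plus a tame reduced complement. Fix an ample Cartier divisor $A$ on the quasi-projective variety $X$. For $m \gg 0$ the sheaf $\sO_X(mNA - D)$ is very ample, so Bertini supplies a reduced effective Cartier divisor $D' \in |mNA - D|$ whose support shares no component with any of the $D_j$. Setting $H := mA$ and picking $s \in H^{0}(X, \sO_X(NH))$ with zero divisor $D + D'$, I form the cyclic algebra $\sA := \bigoplus_{i=0}^{N-1} \sO_X(-iH)$ with multiplication induced by $s$, set $\widetilde{Y} := \Spec_X \sA$, and take $Y$ to be its normalization. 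The natural $\mu_N$-action scaling the $i$-th summand by $\zeta^i$ then exhibits $\gamma : Y \to X$ as Galois with cyclic group $\mu_N$. Because $D'$ has multiplicity one in $\mathrm{div}(s)$, the section $s$ is not a proper power in any $\sO_X(kH)$, hence $\widetilde{Y}$ is integral and $Y$ is an irreducible normal variety; finiteness of $\gamma$ makes it automatically equidimensional, hence essentially equidimensional in the sense of Definition~\ref{defn:esseqdim}.

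Finally I verify the strongly adapted condition, which is a local question at a general point of each $D_j$. In analytic coordinates with $D_j = \{u = 0\}$, the choice of $D'$ forces $s = u^{N/b_j} \cdot (\text{unit})$, so $\widetilde{Y}$ is locally cut out by $t^N = u^{N/b_j}$. Setting $d := \gcd(N, N/b_j) = N/b_j$, this factors as $\prod_{k=0}^{d-1}(t^{b_j} - \zeta^k u)$, and the normalization $Y$ is locally a disjoint union of $d$ smooth branches of the form $\{t^{b_j} = u\}$. Consequently every prime divisor $E \subset Y$ lying over $D_j$ satisfies $\mult_E \gamma^* D_j = b_j$, exactly as required by Definition~\ref{def:E:5}. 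The principal technical obstacle is this local normalization calculation, ensuring that the $N$-fold cyclic cover ramifies to order precisely $b_j$ after normalization; producing the auxiliary divisor $D'$ when $X$ has singularities is a secondary subtlety, best handled by working on the smooth locus and taking Zariski closures.
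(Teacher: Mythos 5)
Your construction is correct, but it follows a genuinely different route from the paper. The paper reduces to the snc case: it takes a log resolution $\pi\colon \wtilde X \to X$, quotes the known existence of a cyclic Galois cover strongly adapted to the snc pair $(\wtilde X, \pi_*^{-1}\Delta)$ (Kawamata's covering construction, via \cite[Prop.~2.9]{MR2860268}), and then descends to $X$ by Stein factorisation of $\wtilde Y \to X$, invoking the equivariant Zariski Main Theorem \cite[Thm.~A.1]{GKP13} to see that the finite part is again cyclic Galois; strong adaptedness over $X$ is then read off at general points of the $D_j$, where $\pi$ is an isomorphism. You instead build the cover directly on the possibly singular $X$: a single cyclic cover of degree $N=\lcm(b_j)$ attached to a section of $\sO_X(NH)$ with divisor $\sum (N/b_j)D_j + D'$, followed by normalisation, and you verify strong adaptedness by the tame-ramification computation $e = N/\gcd(N, N/b_j) = b_j$. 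This is essentially a self-contained re-proof of the Kawamata-type construction in the normal setting, avoiding both the log resolution and the equivariant ZMT; the paper's argument is shorter because it outsources the covering trick to the literature, and its Stein-factorisation mechanism is the general-purpose way to transport covers across birational modifications. Two small points you should clean up: since $D$ is only a Weil divisor, $mNA - D$ is not Cartier, so speak of global generation of the reflexive sheaf $\sO_X(mNA-D)$ and of a general \emph{reduced Weil} divisor $D'\sim mNA-D$ (Bertini applied on $X_{\reg}$ away from the base locus, noting that base components and the non-invertible locus sit in a small set); and the reduction to a general point of $D_j$ deserves one sentence, namely that by finiteness of $\gamma$ every prime divisor $E$ with $\mult_E \gamma^* D_j \neq 0$ maps onto $D_j$, so the multiplicity is computed at the generic point of $D_j$, which lies in the locus where your local normal form $s = u^{N/b_j}\cdot(\text{unit})$ is valid.
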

\begin{proof}
  Let $π : \wtilde X → X$ be a log resolution of singularities, and consider the
  $ℚ$-Weil divisor given as the strict transform, $\wtilde D := γ_*^{-1}Δ$.  The
  existence of cyclic Galois cover $\wtilde{γ} : \wtilde Y → \wtilde X$ that is
  strongly adapted to $(\wtilde X, \wtilde D)$ has been recalled in
  \cite[Prop.~2.9]{MR2860268}.  We obtain the desired covering by Stein
  factorisation of the composed morphism $\wtilde Y → X$,
  $$
  \xymatrix{ %
    \wtilde Y \ar[rr]_{α} \ar@/^5mm/[rrrr]^{\wtilde{γ}◦ π } && \wtilde X \ar[rr]_{β} && X.
  }
  $$
  The equivariant version of Zariski's Main Theorem, \cite[Theorem~A.1]{GKP13},
  guarantees that $β$ is Galois, and that its Galois group equals that of
  $\wtilde{γ}$.
\end{proof}

\subsubsection{Relation to earlier definitions}
\approvals{Behrouz & yes \\ Benoît & yes \\ Stefan & yes}

Definition~\ref{def:E:5} is equivalent to various other definitions of adapted
morphisms that appear in the literature ---it goes without saying that all are
various takes on the original definition of Campana.  To see this, it is
convenient to first introduce the following definition of the round-up of a
$ℚ$-divisor.

\begin{defn}[$\mathcal C$-round-up]\label{def:E:4}
  Let $(X, Δ)$ be a pair and decompose $Δ$ into irreducible components,
  $Δ = \sum δ_j D_j$.  If $j$ is any index with $δ_j ∈ (0,1)$, write
  $δ_j = \frac{a_j}{b_j}$ with $a_j$, $b_j ∈ ℕ$ coprime.  Finally, set
  $$
  \lceil δ_j \rceil_{\mathcal C} =
  \begin{cases}
    0 & \text{if $δ_j = 0$} \\
    1 & \text{if $δ_j = 1$} \\
    \frac{b_j-1}{b_j} & \text{otherwise.}
  \end{cases}
  $$
  We call the divisor $\lceil Δ \rceil_{\mathcal C} := \sum_j \lceil δ_j
  \rceil_{\mathcal C} · D_j$ the \emph{$\mathcal C$-round-up} of $Δ$.  If $Δ
  = \lceil Δ \rceil_{\mathcal C}$, we call $(X,Δ)$ a $\mathcal C$-pair.
\end{defn}

\begin{rem}[Comparison with earlier definitions]
  Let $(X,Δ)$ be a pair and $γ: Y → X$ a covering map.  Then the following are
  equivalent.
  \begin{enumerate}
  \item The cover $γ$ is adapted to $(X,Δ)$ in the sense of
    Definition~\ref{def:E:5}.
  \item The cover $γ$ is adapted to $(X, \lceil Δ \rceil_{\mathcal C})$ in the
    sense of Definition~\ref{def:E:5}.
  \item The morphism $γ$ is adapted to $(X, \lceil Δ \rceil_{\mathcal C})$ in
    the sense of \cite[Definition~2.7]{MR2860268}.
  \end{enumerate}
  Ditto for strongly adapted covers.
\end{rem}

\subsection{Numerical classes, positivity}
\approvals{Behrouz & yes \\ Benoît & yes \\ Stefan & yes}

Over a $ℚ$-factorial projective variety the determinant of any coherent sheaf
naturally defines an element of $N^1(X)_{ℚ}$.  To avoid potentially cumbersome
notations, let us fix a notation for such numerical classes.

\begin{notation}[Numerical classes]
  Let $X$ be a $ℚ$-factorial, projective variety and $\sF$ a coherent sheaf of
  $\sO_X$-modules.  Consider the Weil divisorial sheaf
  $\det \sF := (Λ^{\rank \sF} \sF)^{**}$ ---when $\sF$ is torsion and its rank
  is zero, then $\det \sF$ is nothing but the zero sheaf.  The numerical class
  of $\det \sF$ will be written as $[\sF] ∈ N^1(X)_{ℚ}$.
\end{notation}

\begin{warning}[Lack of additivity]
  Note that the numerical class operator $[\bullet]$ is not necessarily additive
  in exact sequences.  In fact, since the reflexive hull of any torsion sheaf is
  zero, the ideal sheaf sequence of any Cartier divisor will give a
  counterexample.
\end{warning}

\begin{notation}[Harder-Narasimhan filtration, generic positivity]
  Let $X$ be a normal, projective variety and $H$ be an ample Cartier divisor on
  $X$.  If $\sF$ is any torsion free, coherent sheaf of $\sO_X$-modules,
  consider the associated Harder-Narasimhan filtration
  $$
  0 = \sF_0 \subsetneq \sF_1 \subsetneq \cdots \subsetneq \sF_r = \sF.
  $$
  With this notation in place, write
  $$
  μ^{\max}_H(\sF) := μ_H(\sF_1) \quad\text{and}\quad μ^{\min}_H(\sF) :=
  μ_H(\sF_r/\sF_{r-1}).
  $$
  We call $\sF$ \emph{generically semipositive with respect to $H$} if
  $μ^{\min}_H(\sF) ≥ 0$.  We call $\sF$ \emph{generically semipositive} if $\sF$
  is generically semipositive with respect to any ample divisor.
\end{notation}

\part{Fractional semipositivity and application to hyperbolicity}
%
%
\svnid{$Id: 03-fractDifferentials.tex 176 2016-10-25 12:24:02Z kebekus $}

\section{Logarithmic differentials with fractional pole order}
\label{sect:log-diffs}
\subversionInfo
\approvals{Behrouz & yes \\ Benoît & yes \\ Stefan & yes}

In this section we define the sheaves of \emph{adapted differential forms}.
These sheaves are, in a sense, the natural generalisation of sheaves of
log-differential forms for pairs $(X,D)$ with reduced boundary divisor $D$, to
the context of pairs $(X,Δ)$ with $Δ=\sum δ_i·Δ_i$, where $δ_i ∈ [0,1]∩ ℚ$ are
fractional.  Their construction depends on the choice of the adapted morphism.
Campana realised that, even in the purely logarithmic setting of Viehweg's
hyperbolicity problem, they provide great flexibility in dealing with birational
problems.  We begin this section by explaining the local description of these
sheaves when $(X,Δ)$ is snc.

\subsection{Informal explanation and local computation}
\label{sec:localComp}
\approvals{Behrouz & yes \\ Benoît & yes \\ Stefan & yes}

Throughout the present Section~\ref{sec:localComp}, we consider the following
particularly simple setting.

\begin{setting}[Setup and notation for Section~\ref{sec:localComp}]\label{setting:3-1}
  Let $(X,Δ)$ be an an snc pair.  Let $γ : Y → X$ be a cover that is adapted to
  $(X,Δ)$ and can locally be written in normal form.  Assume that
  $\supp (Δ+\Branch γ)$ and $\supp (γ^* Δ+\Ramification γ)$ are both smooth.
  Choose a point $\vec y ∈ Y$ and set $\vec x := γ(y)$.  Observe that if
  $\vec x ∈ \supp Δ$, then there exists exactly one component of $D ⊆ Δ$ that
  contains $\vec x$.  Let $δ$ be the coefficient of this component.  If
  $\vec x \not ∈ \supp Δ$, set $δ := 0$.

  We choose coordinates
  $$
  x_0, …, x_n ∈ \sO_{X,\vec x} \quad\text{and}\quad y_0, …, y_n ∈ \sO_{Y, \vec y},
  $$
  centred about $\vec x$ and $\vec y$, respectively, that present $γ$ in local
  normal form.  In particular, there exists a number $m$ such that
  $x_0 ◦ γ = y_0^m$.  If $δ = 1$ and if $γ$ happens to be unramified at
  $\vec y$, we may assume that locally near $\vec x$, the divisor $D$ is given
  as $\{ x_0 = 0\}$.
\end{setting}

We are interested in writing formal fractional-exponent-differential forms
$σ := x_0^{-δ} · dx_0$.  While this is not well-defined on $X$, one \emph{can}
write down the formal pull-back of $σ$ to $Y$; this will lead to the definition
of adapted differentials.  For the convenience of the reader, we discuss the
cases where $δ = 0$, where $0 < δ < 1$ and where $δ = 1$ separately.

\subsubsection{The case where $0 < δ < 1$}
\approvals{Behrouz & yes \\ Benoît & yes \\ Stefan & yes}

In this case, the divisor $D$ is necessarily contained in the branch locus of
$γ$, and locally given as $\{x_0 = 0\}$.  One may formally write
$$
(dγ) (σ) = y_0^{-m · δ} · d (y_0^m) = m · y_0^{(m-1)-mδ} · d
y_0.
$$
The assumption that $γ$ is adapted ensures that $m· δ$ is integral, and hence so
is the exponent of $y_0$.  The fact that $δ < 1$ ensures that the exponent is
not negative.  We aim to define a \emph{sheaf of adapted differentials}, in
symbols $Ω^1_{(X,Δ,γ)}$, as a subsheaf of $Ω^1_Y$, whose stalk at $\vec y$ is
generated by the forms
$$
y_0^{(m-1)-mδ} · d y_0 \quad\text{and}\quad dy_1,\, …,\, dy_n ∈ Ω^1_{Y, \vec y}.
$$

\begin{warning}
  It might seem tempting to take this as a definition for the sheaf of adapted
  differentials.  However, the following example shows that this is quite
  delicate.  Let $Z$ be a smooth variety and $H$ a smooth hypersurface on $Z$.
  Let $\vec z ∈ H$ be any and $z_0, …, z_n ∈ \sO_{Z, \vec z}$ a regular system
  of parameters, where $z_0$ generates the ideal of $H$.  Then note that the
  span
  $$
  \left\langle z_0^2 · d z_0,\, d z_1,\, …,\, d z_n \right\rangle ⊂ Ω^1_{Z,
    \vec z}
  $$
  \emph{does depend in a non-trivial way on the choice of coordinates}.  To give
  a proper definition, it will always be necessary to take the morphism $γ$ into
  account.
\end{warning}

In order to define $Ω^1_{(X,Δ,γ)}$ properly, in a coordinate-free way, we
compare its set of generators-to-be to the well-known set of generators for the
image of the pull-back map $dγ: γ^* Ω^1_X → Ω^1_Y$,
\begin{align*}
  Ω^1_{(X,Δ,γ),\vec y} & = \left\langle y_0^{(m-1)-mδ} · d y_0,\, dy_1,\, …,\, dy_n \right\rangle & ⊆ Ω^1_{Y, \vec y}\phantom{.}\\
  \Image(dγ)_{\vec y} & = \left\langle y_0^{m-1} · d y_0,\, dy_1,\, …,\, dy_n \right\rangle & ⊆ Ω^1_{Y, \vec y}.
\end{align*}
This suggests to define $Ω^1_{(X,Δ,γ)}$ near the point $\vec y$ in one of the
two following, equivalent ways,
\begin{align}
\label{eq:A} Ω^1_{(X,Δ,γ)} & := Ω^1_Y ⊗ \left( y_0^{(m-1)-mδ} \right) + \Image(dγ) \\
\label{eq:B} Ω^1_{(X,Δ,γ)} & := \left( \Image(dγ) ⊗ \bigl(y_0^{-mδ} \bigr)
\right) ∩ Ω^1_Y.
\end{align}

\begin{explanation}[Ideals in \eqref{eq:A}]
  In \eqref{eq:A}, we view $\bigl(y_0^{(m-1)-mδ}\bigr)$ as an ideal, view
  $Ω^1_Y ⊗ \bigl(y_0^{(m-1)-mδ} \bigr)$ as a subsheaf of $Ω^1_Y$, and the sum is
  the sum of coherent subsheaves there.
\end{explanation}

\begin{explanation}[Weil divisorial sheaves in \eqref{eq:B}]
  In \eqref{eq:B}, we view $\bigl(y_0^{-mδ} \bigr)$ as the Weil divisorial sheaf
  generated by the rational function $y_0^{-mδ}$, view $Ω^1_Y$ as a subsheaf of
  $\Image(dγ) ⊗ \bigl(y_0^{-mδ} \bigr)$, and the intersection is the
  intersection of coherent subsheaves there.
\end{explanation}

In order to avoid the awkward use of adapted coordinates, observe that the
divisor given by $y_0^{m-1}$ equals the ramification divisor of $γ$, while the
divisor given by $y_0^{mδ}$ is the pull-back divisor $γ^*Δ$.
Definitions~\eqref{eq:A}--\eqref{eq:B} thus simplify as follows,
\begin{align}
\label{eq:C} Ω^1_{(X,Δ,γ)} & := Ω^1_Y ⊗ \sO_Y \bigl( -γ^*Δ - \Ramification(γ)\bigr) + \Image(dγ) \\
\label{eq:D} Ω^1_{(X,Δ,γ)} & := \bigl( \Image(dγ) ⊗ \sO_Y( γ^*Δ) \bigr) ∩ Ω^1_Y.
\end{align}

\subsubsection{The case where $δ = 0$}
\approvals{Behrouz & yes \\ Benoît & yes \\ Stefan & yes}

In this case, $γ$ may or may not be ramified at $\vec y$.  The form
$σ = x_0^{-δ}· dx_0 = d x_0$ is an ordinary differential form, and so is its
pull-back $(dγ)(σ) = m · y_0^{m-1} · d y_0$.  We would then set
$$
Ω^1_{(X,Δ,γ)} := \Image(dγ)
$$ near the point $\vec y$.  Observe that this
definition agrees with \eqref{eq:C}--\eqref{eq:D} above.

\subsubsection{The case where $δ = 1$}
\approvals{Behrouz & yes \\ Benoît & yes \\ Stefan & yes}

In this case, $γ$ may or may not be ramified at $\vec y$.  If $γ$ \emph{is}
ramified at $\vec y$, then the assumption that $\supp (Δ+\Branch γ)$ is smooth
implies that near $\vec x$, the divisor $D$ equals the branch locus, and is
given as $\{x_0=0\}$.  The form $σ = x_0^{-1}· dx_0 = d \log x_0$ is a
logarithmic differential form, and so is its pull-back $(dγ)(σ) = d \log y_0$.
In this case, we would like to define the sheaf of adapted differentials near
$\vec y$ as
$$
Ω^1_{(X,Δ,γ)} := Ω^1_Y(\log Δ_γ) = γ^* Ω^1_X(\log \lfloor Δ \rfloor),
\quad\text{where } Δ_γ := (γ^* \lfloor Δ \rfloor )_{\red}.
$$
Formulas \eqref{eq:C}--\eqref{eq:D} include this case after the following minor
adjustment.  In fact, extending the pull-back morphism $dγ$ to include
logarithmic differentials, $dγ : γ^* Ω^1_X(\log \lfloor Δ \rfloor) → Ω^1_Y(\log
Δ_γ)$, we can write
\begin{align*}
  Ω^1_{(X,Δ,γ)} & := Ω^1_Y(\log Δ_γ) ⊗ \sO_Y \Bigl( -γ^*\{Δ\} - \Ramification(γ)\Bigr) + \Image(dγ) \\
  Ω^1_{(X,Δ,γ)} & := \Bigl( \Image(dγ) ⊗ \sO_Y \bigl( γ^*\{Δ\} \bigr) \Bigr) ∩ Ω^1_Y(\log Δ_γ).
\end{align*}
These formulas will re-appear in the succeeding Section~\ref{ssec:defns}, where adapted
differentials are formally introduced.

\subsection{Formal definition}
\label{ssec:defns}
\approvals{Behrouz & yes \\ Benoît & yes \\ Stefan & yes}

We now give a formal and coordinate-free definition of ``adapted
differentials'', following the discussion of the previous subsection.  A local
description is also included.

\subsubsection{Adapted differentials for a good cover}
\label{ssec:adgagc}
\approvals{Behrouz & yes \\ Benoît & yes \\ Stefan & yes}

We define adapted differentials first for covers that satisfy all the
assumptions of Setting~\ref{setting:3-1}.  We call such covers \emph{good}.  To
be more precise, the following definition will be used.

\begin{defn}\label{defn:goodcover}
  Let $(X,Δ)$ be a pair, and $γ: Y → X$ be a cover.  The cover is called
  \emph{good} if the following properties hold.
  \begin{enumerate}
  \item The variety $X$ and its subvariety $\supp (Δ+\Branch γ)$ are smooth.
  \item The variety $Y$ and its subvariety $\supp (γ^* Δ+\Ramification γ)$ are
    smooth.
  \item The cover $γ$ is adapted.
  \item The cover $γ$ can locally be written in normal form.
  \end{enumerate}
\end{defn}

As indicated above, we take \eqref{eq:D} as the definition of adapted
differentials, at least for good covers.

\begin{defn}[Adapted differentials for good cover]\label{def:E:6a}
  Let $(X,Δ)$ be a pair, and $γ: Y → X$ a be good cover.  Consider the pull-back
  map of logarithmic differentials,
  $$
  dγ : γ^* Ω^1_X \bigl(\log \lfloor Δ \rfloor \bigr) → Ω^1_Y(\log Δ_γ), \quad
  \text{where }Δ_γ := (γ^* \lfloor Δ \rfloor )_{\red}.
  $$
  The \emph{sheaf of adapted differentials on $Y$} is then defined as
  $$
  Ω^1_{(X,Δ,γ)} := \Bigl( \Image (dγ) ⊗ \sO_Y\bigl( γ^*\{Δ\} \bigr) \Bigr) \:∩\: Ω^1_Y(\log Δ_γ),
  $$
  where the intersection is the intersection of subsheaves in
  $Ω^1_Y (\log Δ_γ) ⊗ \sO_Y(γ^* \{ Δ\})$.
\end{defn}

\begin{rem}[Inclusions between sheaves of adapted differentials]\label{rem:X0}
  In the setting of Definition~\ref{def:E:6a}, it follows immediately from the
  definition that there exist inclusions,
  \begin{equation}\label{eq:X0}
    γ^* Ω^1_X(\log \lfloor Δ \rfloor) \;⊆\; Ω^1_{(X,Δ,γ)} \;⊆\; Ω^1_Y \bigl(
    \log Δ_γ \bigr),
  \end{equation}
  satisfying the following properties.
  \begin{enumerate}
  \item\label{il:X0-1} The first inclusion in \eqref{eq:X0} is an equality away
    from $\supp γ^* \{Δ\}$.
  \item\label{il:X0-2} The three terms of \eqref{eq:X0} are equal away from
    $\supp \Ramification(f)$, and near $\supp Δ_γ$.
  \item\label{il:X0-3} If the covering morphism $γ$ is Galois, say with group
    $G$, then all sheaves appearing in \eqref{eq:X0} carry natural structures of
    $G$-sheaves, and the inclusions are inclusions of $G$-sheaves.
  \end{enumerate}
\end{rem}

\begin{rem}[Local description of adapted differentials]
  The inclusions \eqref{eq:X0} can be written down in local coordinates, near
  any given point $\vec y ∈ Y$.  If $γ$ is étale at $\vec y$, or if
  $\vec y ∈ \supp Δ_γ$ then all three sheaves agree, and there is nothing much
  to do.  Let us therefore assume that
  $\vec y ∈ \Ramification(f) \setminus \supp Δ_γ$.  Choose local coordinates as
  in Setup~\ref{setting:3-1} and follow the notation introduced there.

  Near $\vec y$ the sheaves $Ω^1_Y(\log Δ_γ)$ and $Ω^1_Y$ agree, and so do
  $γ^* Ω^1_X(\log \lfloor Δ \rfloor)$ and $γ^* Ω^1_X$.  The sheaf $Ω^1_Y$ is
  freely generated as an $\sO_Y$-module by symbols $d y_0, …, d y_n$.  The
  sheaves of \eqref{eq:X0} are then generated as follows,
  $$
  \begin{matrix}
    γ^* Ω^1_X & = & \big\langle & y_0^{m-1}·dy_0, & dy_1, & …, & dy_n & \big\rangle \\
    Ω^1_{(X,Δ,γ)} & = & \big\langle & y_0^{(m-1)-mδ}·dy_0, & dy_1, & …, & dy_n & \big\rangle \\
    Ω^1_Y & = & \big\langle & d y_0, & d y_1, & …, & d y_n & \big\rangle.
  \end{matrix}
  $$
\end{rem}

The following is now an immediate consequence of the local description.

\begin{cor}[Determinants and Chern classes of adapted differentials]\label{cor:det}
  In the setting of Definition~\ref{def:E:6a}, we have equalities of sheaves,
  \begin{align*}
    \det\: Ω^1_{(X,Δ,γ)} & = \det \left(γ^* \; Ω^1_X(\log \lfloor Δ \rfloor) \right) ⊗ \det \sO_Y\bigl( γ^* \{ Δ\} \bigr) \\
                           & = \sO_Y \bigl( γ^*(K_X+ \lfloor Δ \rfloor ) + γ^* \{ Δ\} \bigr) \\
                           & = \sO_Y \bigl( γ^*(K_X+ Δ) \bigr).
  \end{align*}
  In particular, $c_1 \bigl( Ω^1_{(X,Δ,γ)} \bigr) = [γ^*(K_X+ Δ)]$.  \qed
\end{cor}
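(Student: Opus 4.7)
The plan is to verify the first of the three displayed equalities by a short local computation in adapted coordinates; the remaining two equalities are then routine bookkeeping involving logarithmic cotangent sheaves and the decomposition $Δ = \lfloor Δ \rfloor + \{Δ\}$.

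I begin by observing that every sheaf appearing on either side is a rank-one reflexive sheaf on the smooth variety $Y$, hence is determined by its associated divisor class, and agreement on a big open subset implies global agreement. Since $γ$ is a good cover in the sense of Definition~\ref{defn:goodcover}, the local description of $Ω^1_{(X,Δ,γ)}$ by generators recalled in the preceding remark applies at \emph{every} point of $Y$. It therefore suffices to work in a chart around an arbitrary point $\vec y ∈ Y$, with coordinates as in Setting~\ref{setting:3-1}.

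Fix such a chart. Away from $\supp \Ramification γ ∪ \supp Δ_γ$ all three sheaves appearing in \eqref{eq:X0} coincide with $Ω^1_Y$ and the claim is trivial. Near a point of $\supp Δ_γ$, Remark~\ref{rem:X0} ensures that the three sheaves in \eqref{eq:X0} agree with $Ω^1_Y(\log Δ_γ)$, so the first equality reduces to the standard pull-back formula for log-cotangent bundles along a morphism totally ramified along the boundary. The essential case is therefore $\vec y ∈ \supp \Ramification γ \setminus \supp Δ_γ$, where the local ramification component $E := \{y_0 = 0\}$ appears with multiplicity $m-1$ in $\Ramification γ$ and with multiplicity $mδ$ in $γ^*Δ$. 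Reading off the generators from the table in the preceding remark, $\det Ω^1_{(X,Δ,γ)}$ is locally generated inside $\det Ω^1_Y$ by $y_0^{(m-1)-mδ}\, dy_0 \wedge \cdots \wedge dy_n$, whereas $\det γ^*Ω^1_X(\log \lfloor Δ \rfloor)$ is locally generated by $y_0^{(m-1)-m\lfloor δ \rfloor}\, dy_0 \wedge \cdots \wedge dy_n$. The ratio of these exponents equals $m\{δ\}$, which is precisely the multiplicity of $γ^*\{Δ\}$ along $E$; this yields the first equality after twisting.

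The second equality then unwinds the identity $\det Ω^1_X(\log \lfloor Δ \rfloor) = \sO_X(K_X + \lfloor Δ \rfloor)$ together with the compatibility of reflexive pull-back with Weil divisorial sheaves recorded after Construction~\ref{cons:pbwd}, and the third equality is the tautology $Δ = \lfloor Δ \rfloor + \{Δ\}$. The only minor subtlety worth flagging is that the case $δ = 1$ deserves a separate look, since there the defining intersection collapses to the full log-cotangent sheaf while $\{δ\}$ vanishes; this case, however, is already subsumed by the computation on $\supp Δ_γ$ above. The $c_1$-statement then follows immediately from the sheaf-level identity.
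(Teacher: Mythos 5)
Your argument is correct and is essentially the paper's: the corollary is stated there as an immediate consequence of the local generator description of $γ^* Ω^1_X(\log \lfloor Δ \rfloor) ⊆ Ω^1_{(X,Δ,γ)} ⊆ Ω^1_Y(\log Δ_γ)$, which is exactly the computation you carry out chart by chart (your case analysis, including the remark that $δ=1$ is absorbed into the $\supp Δ_γ$ case, just makes explicit what the paper leaves implicit).
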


\subsubsection{Adapted reflexive differentials in the general setting}
\approvals{Behrouz & yes \\ Benoît & yes \\ Stefan & yes}

We now extend the definition of the adapted differentials from good covers to
arbitrary ones.

\begin{defn}[Adapted reflexive differentials]\label{def:E:6b}
  Let $(X,Δ)$ be a pair, and let $γ: Y → X$ be a cover that is adapted to
  $(X,Δ)$.  Let $X° ⊆ X$ and $Y° ⊆ Y$ be the maximal open sets where $f$ can
  locally be written in normal form and where $\supp (Δ+\Branch γ)$ and
  $\supp (γ^* Δ+\Ramification γ)$ are both smooth; these are big open subsets of
  $X$ and $Y$, respectively.  Let $ι : Y° → Y$ be the inclusion map, and set
  $Δ° := Δ|_{X°}$ and $γ° := γ|_{Y°}$.  We define the \emph{sheaf of adapted
    reflexive differentials on $Y$} as
  $$
  Ω^{[1]}_{(X,Δ,γ)} := ι_* \: Ω^1_{(X°,Δ°,γ°)}.
  $$
  where $Ω^1_{(X°,Δ°,γ°)}$ is the sheaf that has been introduced in
  Definition~\vref{def:E:6a}.
\end{defn}

\begin{rem}[Inclusions between sheaves of adapted reflexive differentials]\label{rem:X1}
  In the setting of Definition~\ref{def:E:6b}, observe that $Ω^1_{(X°,Δ°,γ°)}$
  is locally free on $Y°$.  It follows that the sheaf of adapted reflexive
  differentials is reflexive.  Using \eqref{eq:X0}, push-forward from $Y°$ to
  $Y$ induces inclusions of reflexive sheaves as follows,
  \begin{equation}\label{eq:X1}
    γ^{[*]} Ω^{[1]}_X(\log \lfloor Δ \rfloor) \;⊆\; Ω^{[1]}_{(X,Δ,γ)} \;⊆\; Ω^{[1]}_Y \bigl(
    \log Δ_γ \bigr).
  \end{equation}
  where $Δ_γ := (γ^* \lfloor Δ \rfloor )_{\red}$.  Remark~\ref{rem:X0} and
  Corollary~\ref{cor:det} also have direct analogues.
  \begin{enumerate}
  \item\label{il:X1-1} The first inclusion in \eqref{eq:X1} is an equality away
    from $\supp γ^* \{Δ\}$.
  \item\label{il:X1-2} The three terms of \eqref{eq:X1} are equal away from
    $\supp \Ramification(f)$, and near general points of $\supp Δ_γ$.
  \item\label{il:X1-3} If the covering morphism $γ$ is Galois, say with group
    $G$, then all sheaves appearing in \eqref{eq:X1} carry natural structures of
    $G$-sheaves, and the inclusions are inclusions of $G$-sheaves.
  \item\label{il:X1-4} We have an equality of sheaves,
    $$
    \det\: Ω^{[1]}_{(X,Δ,γ)} = \sO_Y \bigl( γ^*(K_X+ Δ) \bigr).
    $$
    If $K_X+Δ$ is $ℚ$-factorial, then
    $c_1 \bigl( Ω^1_{(X,Δ,γ)} \bigr) = [γ^*(K_X+ Δ)]$.
  \end{enumerate}
\end{rem}

\subsubsection{Relation to earlier definitions}
\approvals{Behrouz & yes \\ Benoît & yes \\ Stefan & yes}

If $(X,Δ)$ is a $\mathcal C$-pair, the notion of ``adapted differentials''
agrees with the notion introduced in earlier papers of Campana \emph{et al.},
cf.~\cite[Sect.~2.D]{MR2860268} and Campana's work referenced there.

%
%
\svnid{$Id: 04-tangentsAndFoliations.tex 169 2016-10-18 13:42:18Z kebekus $}

\section{Fractional tangents and foliations}
\label{sect:fractional}
\subversionInfo
\approvals{Behrouz & yes \\ Benoît & yes \\ Stefan & yes}

The aim of this section is to lay down the technical groundwork for
Section~\ref{sect:failure}.  There, we construct a certain subsheaf of $\sT_X$
and study its integrability properties.  The key technical result here is
Proposition~\ref{prop:sddffg}, whose proof requires some preliminary
observations about the local description of vector fields that are transversal
(resp.\ tangential) to the branch locus of an adapted cover.

\subsection{Adapted tangents}
\label{subsect:local}
\approvals{Behrouz & yes \\ Benoît & yes \\ Stefan & yes}

We first define, in the obvious way, the notion of an adapted tangent sheaf, by
dualizing the adapted sheaf of differentials.

\begin{defn}[Adapted tangents]\label{def:E:7}
  Given a pair $(X,Δ)$ and an adapted cover $γ: Y → X$, set
  $$
  \sT_{(X,Δ,γ)} := \bigl( Ω^{[1]}_{(X,Δ,γ)} \bigr)^*,
  $$
  where $Ω^{[1]}_{(X,Δ,γ)}$ is the sheaf of adapted reflexive differentials that
  was introduced in Definition~\ref{def:E:6b}.  We call $\sT_{(X,Δ,γ)}$ the
  \emph{adapted tangent sheaf} or \emph{sheaf of adapted tangents}.
\end{defn}

\begin{rem}[Inclusions between tangent sheaves]\label{rem:X2}
  Dualising, Remark~\ref{rem:X1} yields inclusions
  \begin{equation}\label{eq:X2}
    \sT_Y \bigl( -\log Δ_γ \bigr) \;⊆\; \sT_{(X,Δ,γ)} \;⊆\;
    γ^{[*]} \sT_X(-\log \lfloor Δ \rfloor).
  \end{equation}
  where $Δ_γ := (γ^* \lfloor Δ \rfloor )_{\red}$.  The following additional
  properties hold.
  \begin{enumerate}
  \item\label{il:X2-1} The second inclusion in \eqref{eq:X2} is an equality away
    from $\supp γ^* \{Δ\}$.
  \item\label{il:X2-2} The three terms of \eqref{eq:X2} are equal away from
    $\supp \Ramification(f)$, and near general points of $\supp Δ_γ$.
  \item\label{il:X2-3} If the covering morphism $γ$ is Galois, say with group
    $G$, then all sheaves appearing in \eqref{eq:X2} carry natural structures of
    $G$-sheaves, and the inclusions are inclusions of $G$-sheaves.
  \item\label{il:X2-4} We have an equality of sheaves,
    $$
    \det\: \sT_{(X,Δ,γ)} = \sO_Y \bigl( -γ^*(K_X+ Δ) \bigr).
    $$
    If $K_X+Δ$ is $ℚ$-factorial, then
    $c_1 \bigl( \sT_{(X,Δ,γ)} \bigr) = [-γ^*(K_X+ Δ)]$.
  \end{enumerate}
\end{rem}

\subsubsection{Adapted tangents for good covers}
\approvals{Behrouz & yes \\ Benoît & yes \\ Stefan & ---}

For a good adapted cover, we were able to give a complete descriptions of
adapted differentials in Section~\ref{ssec:adgagc}.  The following is the direct
analogue of this for adapted tangents.

\begin{rem}[Local description of adapted tangents]\label{rem:localtgts}
  Setup as in Definition~\ref{def:E:7}.  If the cover $γ$ is good, then the
  inclusions \eqref{eq:X2} can be written down in local coordinates, near any
  given point $\vec y ∈ Y$.  If $γ$ is étale at $\vec y$, or if
  $\vec y ∈ \supp Δ_γ$ then all three sheaves agree, and there is nothing much
  to do.  Let us therefore assume that
  $\vec y ∈ \Ramification(f) \setminus \supp Δ_γ$.  Choose local coordinates as
  in Setup~\ref{setting:3-1} and follow the notation introduced there.

  Near $\vec y$, the sheaves $γ^* \sT_X(- \log \lfloor Δ \rfloor)$ and
  $γ^* \sT_X$ agree, and so do the sheaves
  $\sT_Y \bigl( -\log (γ^* \lfloor Δ \rfloor)_{\red} \bigr)$ and $\sT_Y$.  The
  sheaf $γ^* \sT_X$ is freely generated as an $\sO_Y$-module by symbols
  $γ^* \frac{\partial}{\partial x_0}, …, γ^* \frac{\partial}{\partial x_n}$.
  The sheaves of \eqref{eq:X2} are then generated as follows,
  \begin{equation}\label{eq:cvbcu}
    \begin{matrix}
      γ^* \sT_X & = & \big\langle & γ^* \frac{\partial}{\partial x_0}, & γ^* \frac{\partial}{\partial x_1}, & …, & γ^* \frac{\partial}{\partial x_n} & \big\rangle \\
      \sT_{(X,Δ,γ)} & = & \big\langle & y_0^{mδ}·γ^* \frac{\partial}{\partial x_0}, & γ^* \frac{\partial}{\partial x_1}, & …, & γ^* \frac{\partial}{\partial x_n} & \big\rangle \\
      \sT_Y & = & \big\langle & y_0^{m-1}·γ^* \frac{\partial}{\partial x_0}, & γ^* \frac{\partial}{\partial x_1}, & …, & γ^* \frac{\partial}{\partial x_n} & \big\rangle.
    \end{matrix}
  \end{equation}
\end{rem}

The local description has the following consequence, which will be relevant in
the study of foliations.

\begin{lem}\label{lem:trafalgar}
  In the setting of Definition~\ref{def:E:7}, let
  $\vec V ∈ H^0 \bigl( X,\, \sT_X(-\log \lfloor Δ \rfloor) \bigr)$ be any
  logarithmic vector field on $X$, with associated pull-back
  $$
  γ^* \vec V ∈ H^0 \bigl( Y,\, γ^* \sT_X(-\log \lfloor Δ \rfloor) \bigr).
  $$
  Then, the following holds.
  \begin{enumerate}
  \item\label{il:nelson} If $\vec V$ is everywhere transversal to the smooth
    subvariety $\supp (\{Δ\} + \Branch γ)$, then $γ^* \vec V$ generates the
    quotient $γ^* \sT_X(-\log \lfloor Δ \rfloor) / \sT_{(X,Δ,γ)}$.

  \item\label{il:napoleon} If $\vec V$ is everywhere tangential to the smooth
    subvariety $\supp (\{Δ\} + \Branch γ)$, then $γ^* \vec V$ is contained in
    $H^0 \bigl( Y,\, \sT_Y ( -\log Δ_γ ) \bigr)$.
  \end{enumerate}
\end{lem}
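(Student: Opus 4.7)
The plan is to reduce both assertions to local computations in good normal form. Because both sheaves $\sT_{(X,Δ,γ)}$ and $\sT_Y(-\log Δ_γ)$ are reflexive and the question is local on $Y$, it suffices to verify the claims at points of the big open set where $γ$ can locally be written in normal form and the supports of $Δ+\Branch γ$ and $γ^*Δ+\Ramification γ$ are smooth. In other words, we may pretend we are in Setting~\ref{setting:3-1}. Then Remark~\ref{rem:localtgts} tells us that the only interesting stalks sit at points $\vec y \in \Ramification(γ)\setminus \supp Δ_γ$: at étale points, and at points of $\supp Δ_γ$, the three sheaves in \eqref{eq:X2} coincide, so for \ref{il:nelson} the quotient is zero, and for \ref{il:napoleon} the inclusion $γ^* \vec V \in \sT_Y(-\log Δ_γ)$ follows from the assumption $\vec V \in \sT_X(-\log \lfloor Δ \rfloor)$.

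At an interesting point, I would write $γ$ in the coordinates of Setup~\ref{setting:3-1}, so $x_0 \circ γ = y_0^m$ and $x_i \circ γ = y_i$ for $i \geq 1$. Since $\vec y \notin \supp Δ_γ$ means that no integer-coefficient component passes through $\vec x$, we have $\sT_X(-\log \lfloor Δ \rfloor) = \sT_X$ locally. Writing $\vec V = \sum_i a_i \tfrac{\partial}{\partial x_i}$, the transversality or tangentiality hypothesis translates directly into a statement about $a_0(\vec x)$, because $\supp(\{Δ\}+\Branch γ)$ is locally cut out by $x_0$ alone. The pullback becomes $γ^* \vec V = \sum_i (γ^* a_i)\, γ^* \tfrac{\partial}{\partial x_i}$, and the calculation then appeals to the explicit generators in the display \eqref{eq:cvbcu}.

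For part~\ref{il:nelson}, transversality gives that $γ^* a_0$ is a unit at $\vec y$. From \eqref{eq:cvbcu} the stalk of the quotient $γ^{[*]}\sT_X(-\log \lfloor Δ \rfloor)/\sT_{(X,Δ,γ)}$ is the cyclic $\sO_{Y,\vec y}$-module generated by the class of $γ^* \tfrac{\partial}{\partial x_0}$ modulo $y_0^{mδ}\cdot γ^* \tfrac{\partial}{\partial x_0}$; since $(γ^* a_i)\,γ^*\tfrac{\partial}{\partial x_i}$ for $i \geq 1$ already lies in $\sT_{(X,Δ,γ)}$, we get $γ^* \vec V \equiv (γ^* a_0)\, γ^*\tfrac{\partial}{\partial x_0}$, which generates because $γ^* a_0$ is a unit. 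For part~\ref{il:napoleon}, tangentiality gives $a_0 = x_0\,\tilde a_0$, hence $γ^* a_0 = y_0^m\,(γ^* \tilde a_0)$, and then
$$
γ^* \vec V = y_0\,(γ^* \tilde a_0)\cdot\bigl(y_0^{m-1}\,γ^* \tfrac{\partial}{\partial x_0}\bigr) + \sum_{i \geq 1} (γ^* a_i)\,γ^*\tfrac{\partial}{\partial x_i},
$$
which by \eqref{eq:cvbcu} lies in $\sT_Y$, and a fortiori in $\sT_Y(-\log Δ_γ)$.

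I do not expect any real obstacle: the proof is bookkeeping once the correct reduction is made. The one subtlety to state carefully is that although Remark~\ref{rem:X2}\ref{il:X2-2} only records equality of the three sheaves near \emph{general} points of $\supp Δ_γ$, in the good-cover situation Remark~\ref{rem:localtgts} gives this equality at \emph{every} point of $\supp Δ_γ$, which is what allows one to dismiss the contributions there and isolate the genuine ramification locus as the only place to verify both statements.
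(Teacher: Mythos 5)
Your proposal is correct and follows essentially the same route as the paper: reduce to the local normal form of Setting~\ref{setting:3-1}, observe that only points of $\Ramification(γ)\setminus\supp Δ_γ$ are interesting (elsewhere the three sheaves of \eqref{eq:X2} coincide), and then use transversality resp.\ tangentiality to control the coefficient of $\partial/\partial x_0$ before comparing with the explicit generators in \eqref{eq:cvbcu}. The only difference is that you make the reduction to the big open set and the good-cover situation explicit, which the paper leaves implicit.
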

\begin{proof}
  Both items can be shown locally, near given points $\vec y ∈ Y$.  Again, we
  choose local coordinates as in Setup~\ref{setting:3-1} and follow the notation
  introduced there.  Write the vector field $\vec V$ and its pull-back locally
  as
  $$
  \vec V = \sum_{i=0}^n f_i·\frac{\partial}{\partial x_i} \quad\text{and}\quad
  γ^* \vec V = \sum_{i=0}^n (f_i◦ γ)·γ^* \frac{\partial}{\partial x_i}
  $$
  where $f_i ∈ \sO_{X,\vec x}$.

  \subsubsection*{Proof of \ref{il:nelson}}
  
  If $\vec y$ is not contained in $\supp γ^* \{Δ\}$, then we have seen in
  Item~\ref{il:X2-1} of Remark~\ref{rem:X2} that
  $\sT_{(X,Δ,γ)} = γ^* \sT_X(-\log \lfloor Δ \rfloor)$, so there is nothing to
  show.  Let us therefore assume that $\vec y ∈ \supp γ^* \{Δ\}$.  This allows
  to use the local description of adapted tangent from
  Remark~\ref{rem:localtgts}.  The assumption that $\vec V$ is transversal
  implies that $f_0$ does not vanish at $\vec x$.  But then $f_0 ◦ γ$ will not
  vanish at $\vec y$, and the explicit description in \eqref{eq:cvbcu} yields
  the claim.

  \subsubsection*{Proof of \ref{il:napoleon}}

  If $\vec y$ is contained in $\supp Δ_γ$ or in the complement of
  $\supp \Ramification(f)$, then we have seen in Item~\ref{il:X2-2} of
  Remark~\ref{rem:X2} that the three sheaves of \eqref{eq:X2} agree, and there
  is nothing to show.  We will therefore assume without loss of generality that
  $\lfloor Δ \rfloor = 0$, and that $\vec y ∈ \Ramification(f)$.  The assumption
  that $\vec V$ is tangential implies that $f_0$ vanishes along $\{x_0 = 0\}$,
  so that $f_0 = x_0·g_0$ and
  $$
  γ^* \vec V = y_0^m·(g_0◦ γ)·γ^* \frac{\partial}{\partial x_0} +
  \sum_{i=1}^n (f_i◦ γ)·γ^* \frac{\partial}{\partial x_i}.
  $$
  Again, a comparison with the explicit description in \eqref{eq:cvbcu} yields
  the claim.
\end{proof}

\subsection{Foliations}
\approvals{Behrouz & yes \\ Benoît & yes \\ Stefan & yes}

In this subsection we use the local machinery developed in
Subsection~\ref{subsect:local} to establish a technical tool that will play a
significant role in the proof of Theorem~\ref{thm:orbSemPos}.

\subsubsection{Lifting the O'Neil tensor}
\label{ssec:Neil}
\approvals{Behrouz & yes \\ Benoît & yes \\ Stefan & yes}

A key problem in the proof of the main semipositivity result for a given pair
$(X,Δ)$ is to relate the integrability of a certain subsheaf $\sF$ of $\sT_X$ to
the behaviour of the pull-back of the O'Neil tensor\footnote{As introduced in Notation \ref{not:ONeil}.} on $γ^{[*]}\sF$ and
$\sG_{γ}:=(γ^{[*]}\sF∩ \sT_{(X,Δ,γ)})$.  In the next proposition we show that
the restriction of the lift of the O'Neil tensor maps the smaller sheaf
$\sG_{γ}$ to $\sT_{(X,Δ,γ)}/\sG_{γ}$, after taking reflexive hulls.

\begin{prop}[Lifting the O'Neil tensor]\label{prop:sddffg}
  Let $(X,Δ)$ be a pair and let $γ: Y → X$ be an adapted cover.  Let
  $\sF ⊆ \sT_X(- \log \lfloor Δ \rfloor)$ be a saturated subsheaf.  Consider
  reflexive sheaves on $\sG_Y$, $\sG_{γ}$ and $\sG$ on $Y$ as follows,
  \begin{equation}\label{eq:cmnfgj}
    \underbrace{\sT_Y \bigl( -\log Δ_γ \bigr)}_{\text{contains }\sG_Y \::=\: \sG ∩
      \sT_Y} \quad ⊆ \quad \underbrace{\sT_{(X,Δ,γ)}}_{\mathclap{\text{contains
        }\sG_γ \::=\: \sG ∩ \sT_Y}} \quad ⊆ \quad \underbrace{γ^{[*]} \sT_X(-\log
      \lfloor Δ \rfloor)}_{\text{contains }\sG \::=\: γ^{[*]} \sF}.
  \end{equation}
  Next, consider the O'Neil tensor
  $$
  N : \sF^{[2]} → \Bigl(\factor{\sT_X(- \log \lfloor Δ \rfloor)}{\sF}
  \Bigr)^{**},
  $$
  its reflexive pull-back,
  $$
  γ^{[*]}N : \sG^{[2]} → γ^{[*]} \Bigl(\factor{\sT_X(- \log \lfloor Δ \rfloor)}{\sF}
  \Bigr),
  $$
  and write $N_γ$ for the restriction of $γ^{[*]}N$ to the subsheaf
  $\sG_γ^{[2]} ⊆ \sG^{[2]}$.  Then $N_γ$ factorises as follows,
  $$
  \xymatrix{ %
    \sG_γ^{[2]} \ar[r]_(.3){α} \ar@/^7mm/[rr]^{N_γ} &
    \bigl(\factor{\sT_{(X,Δ,γ)}}{\sG_γ} \bigr)^{**} \ar@{^(->}[r]_(.4){β} &
    γ^{[*]} \Bigl( \factor{\sT_X(- \log \lfloor Δ \rfloor)}{\sF} \Bigr).  }
  $$
\end{prop}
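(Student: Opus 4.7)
The source $\sG_γ^{[2]}$ and target $\bigl(\sT_{(X,Δ,γ)}/\sG_γ\bigr)^{**}$ of the claimed factorisation are both reflexive, so it suffices to construct it on a big open subset of $Y$. I would restrict to the intersection of the big open from Definition~\ref{def:E:6b} (where $γ$ is in local normal form and the various branch and ramification divisors are smooth) with the big open where $\sF$ is a subbundle of $\sT_X(-\log \lfloor Δ \rfloor)$. On this open, every sheaf in \eqref{eq:cmnfgj} is locally free, and one can verify the factorisation pointwise in the coordinates of Setting~\ref{setting:3-1}, using the explicit description from Remark~\ref{rem:localtgts}. At any point $\vec y$ where $γ$ is étale, $\vec y \notin \supp γ^*\{Δ\}$, or $\vec y \in \supp Δ_γ$, the first two items of Remark~\ref{rem:X2} make the inclusions \eqref{eq:X2} into equalities and the factorisation is automatic; so the only genuine case is $\vec y \in \supp γ^*\{Δ\}\setminus \supp Δ_γ$.

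In that case, pick a local basis $\vec W_1, \dots, \vec W_r$ of $\sF$ and decompose each basis vector as $\vec W_k = \vec W_k^{\mathrm{tan}} + t_k \partial_{x_0}$, where $\vec W_k^{\mathrm{tan}}$ is tangential to the branch locus $\{x_0 = 0\}$ and $t_k \in \sO_X$. By Lemma~\ref{lem:trafalgar}, $γ^*\vec W_k^{\mathrm{tan}} \in \sT_Y(-\log\{y_0 = 0\})$, and a general section $V_j = \sum_k a_{j,k}\,γ^*\vec W_k$ of $\sG$ lies in $\sG_γ$ precisely when its $γ^*\partial_{x_0}$-coefficient $c_j := \sum_k a_{j,k}(t_k \circ γ)$ belongs to the ideal $(y_0^{mδ})$, which is meaningful by the adaptedness of $γ$ ($mδ \in \bN$). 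By $\sO$-bilinearity of $γ^{[*]}N$, the class $γ^{[*]}N(V_1 \wedge V_2)$ is represented modulo $\sG$ by $\sum_{k,l} a_{1,k}\,a_{2,l}\,γ^*[\vec W_k, \vec W_l]$, and the task reduces to showing that its $γ^*\partial_{x_0}$-coefficient lies in $(y_0^{mδ}) + \sum_k \sO_Y (t_k \circ γ)$, which is precisely the $γ^*\partial_{x_0}$-coefficient of $\sT_{(X,Δ,γ)} + \sG$.

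Using the identity $[\vec W_k, \vec W_l](x_0) = \vec W_k(t_l) - \vec W_l(t_k)$, the pullback-derivation rule $(γ^*\vec W_k^{\mathrm{tan}})(f \circ γ) = (\vec W_k^{\mathrm{tan}}(f)) \circ γ$, and the Leibniz rule, this coefficient rearranges as $\tilde V_1(c_2) - \tilde V_2(c_1)$ modulo $\sum_k \sO_Y (t_k \circ γ)$ and the ideal $(c_1, c_2)$, where $\tilde V_j := \sum_k a_{j,k}\,γ^*\vec W_k^{\mathrm{tan}}$ is an honest logarithmic tangent field in $\sT_Y(-\log\{y_0 = 0\})$. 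As a log-tangent field, $\tilde V_j$ preserves the ideal $(y_0)$ and hence also $(y_0^{mδ})$, so $\tilde V_i(c_j) \in (y_0^{mδ})$; together with $(c_1, c_2) \subseteq (y_0^{mδ})$, every contribution lies in $(y_0^{mδ}) + \sum_k \sO_Y(t_k \circ γ)$, which completes the proof. The main obstacle is this Leibniz bookkeeping: one must isolate the $\tilde V_j$-action on the $c_i$, so that the defining constraint of $\sG_γ$ can be invoked through the fact that logarithmic tangent fields preserve the logarithmic ideal.
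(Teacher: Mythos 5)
Your argument is correct, but it takes a route genuinely different from the paper's. After reducing to a good cover, the paper makes the additional simplification that on the big open set under consideration, for each component $D$ of $\supp(\Delta+\Branch\gamma)$ the subsheaf $\sF$ is \emph{either} everywhere transversal to $D$ \emph{or} everywhere tangent to $D$; this dichotomy splits the problem into two clean cases. In the transversal case, Lemma~\ref{lem:trafalgar}\ref{il:nelson} and the snake lemma show that $\beta$ is already an isomorphism, so there is nothing to factor. In the tangential case, Lemma~\ref{lem:trafalgar}\ref{il:napoleon} gives $\sG\subseteq\sT_Y$, so $\sG=\sG_\gamma=\sG_Y$ and $N_\gamma$ visibly factors through the O'Neil tensor of $\sG_Y\subseteq\sT_Y$. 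You instead avoid the case split entirely: you pick a local basis of $\sF$, split each basis field into tangential part plus a multiple of $\partial_{x_0}$, and track the $\gamma^*\partial_{x_0}$-coefficient of the pulled-back bracket explicitly via the Leibniz rule and the fact that log-tangent fields preserve $(y_0)$ and hence $(y_0^{m\delta})$. Your membership criterion for $\sG_\gamma$ (the $\gamma^*\partial_{x_0}$-coefficient $c_j$ lies in $(y_0^{m\delta})$, up to the ideal $\sum_k\sO_Y(t_k\circ\gamma)$ which the difference $f_{k,0}-t_k\in(x_0)$ absorbs) is correct, and the rearrangement into $\tilde V_1(c_2)-\tilde V_2(c_1)$ modulo $(y_0^{m\delta})+\sum_k\sO_Y(t_k\circ\gamma)$ checks out. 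The paper's dichotomy buys conceptual clarity at no cost since the non-generic locus on each $D$ has codimension at least two in $X$; your computation is more uniform and also clarifies why the adaptedness ($m\delta\in\bN$) and the log condition on $\tilde V_j$ are exactly what is needed.
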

\begin{proof}
  This is easier than the involved notation suggests.  An elementary diagram
  chase shows that the natural morphism $β$ is injective.

  \subsubsection*{Step 1: Simplification}
  \approvals{Behrouz & yes \\ Benoît & yes \\ Stefan & yes}

  To prove that a morphism of reflexive sheaves factorises via a third, it
  suffices to prove the existence of a factorisation on a big open set.  We may
  therefore assume that the following additional properties hold.
  \begin{enumerate}
  \item The cover $γ$ is good.
  \item The saturated subsheaf $\sF ⊆ \sT_X(- \log \lfloor Δ \rfloor)$ is
    actually a subbundle.
  \item The subsheaf $\sG_Y ⊆ \sT_Y \bigl( -\log Δ_γ \bigr)$ is a
    subbundle.  Ditto for $\sG_Y ⊆ \sT_{(X,Δ,γ)}$ and
    $\sG ⊆ γ^{[*]} \sT_X(-\log \lfloor Δ \rfloor)$.
  \item If $D ⊆ \supp (Δ+\Branch γ)$ is any component, then either $\sF$ is
    everywhere transversal to $D$, or everywhere tangent to $D$.
  \end{enumerate}
  This simplifies notation greatly, as all sheaves in question will be locally
  free, so there is no need to take reflexive hulls in each step.  There is more
  we can do.  Recalling from Item~\ref{il:X2-2} of Remark~\ref{rem:X2} that the
  three terms of \eqref{eq:cmnfgj} are equal near general points of $\supp Δ_γ$,
  so that the claim of the proposition is certainly true there, we can also
  assume that the following holds.
  \begin{enumerate}
  \item The integral part of $Δ$ is empty, so $\lfloor Δ \rfloor = 0$ and
    $Δ_γ = 0$.
  \end{enumerate}
  Again, this simplifies notation substantially, allowing us to drop all
  ``$\log \lfloor Δ \rfloor$'' and ``$\log Δ_γ$'' from sheaves of tangents and
  differentials.

  \subsubsection*{Step 2: reduction to the local case}
  \approvals{Behrouz & yes \\ Benoît & yes \\ Stefan & yes}

  The statement of the proposition is clearly local; it suffices to prove the
  factorisation in an analytic neighbourhood of any given point $\vec y ∈ Y$.
  Again, if $γ$ is étale at $\vec y$, then Item~\ref{il:X2-2} of
  Remark~\ref{rem:X2} that the three terms of \eqref{eq:cmnfgj} are equal, and
  there is nothing to show.  We will therefore assume that $γ$ is ramified at
  $\vec y$.  Set $\vec x = γ(\vec y)$ and let $D ⊆ \supp (Δ+\Branch γ)$ be the
  unique component that contains $\vec x$ ---the component is unique because
  $\supp (Δ+\Branch γ)$ is smooth by the assumption that $γ$ is a good cover.
  Replacing $X$ with a suitably small neighbourhood of $\vec x$, if need be, we
  can assume that the following holds in addition.
  \begin{enumerate}
  \item The sheaf $\sF$ is free, say generated by global sections $σ_1$, …,
    $σ_r$.
  \item The support of $Δ+\Branch γ$ is irreducible, $D = \supp (Δ+\Branch γ)$.
  \item If $\sF$ is everywhere transversal to $D$, then $σ_1$ is everywhere
    transversal to $D$.
  \end{enumerate}

  \subsubsection*{Step 3: Proof in case that $\sF$ is everywhere transversal to $D$}
  \approvals{Behrouz & yes \\ Benoît & yes \\ Stefan & yes}

  The following diagram summarises the sheaves in question.
  $$
  \xymatrix{ %
    0 \ar[r] & \sG_γ \ar[r] \ar@{^(->}[d] & \sG \ar[r] \ar@{^(->}[d] & \factor{\sG}{\sG_γ} \ar[r] \ar[d]^{η} & 0 \\
    0 \ar[r] & \sT_{(X,Δ,γ)} \ar[r] & γ^* \sT_X \ar[r] & \factor{γ^* \sT_X}{\sT_{(X,Δ,γ)}} \ar[r] & 0 \\
  }
  $$
  The morphism $η$ is injective by construction.  Lemma~\ref{lem:trafalgar}
  asserts that it is also surjective.  More is true: Item~\ref{il:nelson} even
  asserts that the image is generated by the class of the section $γ^* σ_1$.
  The snake lemma thus implies that the natural map
  $$
  \factor{\sT_{(X,Δ,γ)}}{\sG_γ} → \factor{γ^* \sT_X}{\sG}
  $$
  is isomorphic.  The question of factorisation is therefore void and
  Proposition~\ref{prop:sddffg} is shown in case that $\sF$ is everywhere
  transversal.

  \subsubsection*{Step 4: Proof in case that $\sF$ is everywhere tangential to $D$}
  \approvals{Behrouz & yes \\ Benoît & yes \\ Stefan & yes}

  If $\sF$ is everywhere tangential to $D$, then Lemma~\ref{lem:trafalgar}
  asserts that $\sG$ is already contained in $\sT_Y$, so that the sheaves $\sG$,
  $\sG_γ$ and $\sG_Y$ are actually equal.  The composed morphism $μ$,
  $$
  \xymatrix{ %
    \sG_Y^{[2]} \ar[rrr]_{\text{O'Neil tensor on }Y} \ar@/^5mm/[rrrrrr]^{μ} &&&
    \factor{\sT_Y}{\sG_Y} \ar[rrr] &&& \factor{γ^* \sT_X}{\sG} %
  }
  $$
  clearly equals $N_γ$ over the open set where $γ$ is étale.  Since all sheaves
  in question are locally free, hence torsion free, this means that $N_γ$ equals
  the map $μ$ everywhere.  But $μ$ factors as desired.  This shows
  Proposition~\ref{prop:sddffg} in the last remaining case.
\end{proof}

\subsubsection{Chern classes}
\approvals{Behrouz & yes \\ Benoît & yes \\ Stefan & yes}

When proving the main result of this chapter, we will need to pull-back
foliations from the variety $X$ to an adapted cover, saturate, intersect and
keep track of how Chern classes change in the process.  The following somewhat
technical lemma summarises everything that we will need.

\begin{prop}[Chern classes of foliations]\label{prop:xad}
  Let $(X,Δ)$ be a pair and let $γ: Y → X$ be an adapted cover.  Further, let
  $\sF ⊆ \sT_X$ be a foliation.  Write $Δ = Δ^{trans} + Δ^{ntrans}$, as in
  Notation~\ref{not:genTrans} and consider the following sheaves.
  \begin{align*}
    \sG & := \sF ∩ \sT_X(- \log \lfloor Δ \rfloor) && \text{… saturated subsheaf of } \sT_X(-\log \lfloor Δ \rfloor) \\
    \sH & := γ^{[*]} \sG && \text{… saturated subsheaf of } γ^{[*]} \sT_X(-\log \lfloor Δ \rfloor) \\
    \sH_{(X,Δ,γ)} & := \sH ∩ \sT_{(X,Δ,γ)} && \text{… saturated subsheaf of } \sT_{(X,Δ,γ)}
  \end{align*}
  The determinant sheaves are then related as follows.
  \begin{align}
    \label{il:Y0} \det \sG & = \left[\det (\sF) ⊗ \sO_X( -\lfloor Δ^{trans} \rfloor ) \right]^{**}\\
    \label{il:Y1} \det \sH & = \left[\det \bigl( \sH_{(X,Δ,γ)} \bigr) ⊗ \sO_Y \bigl(γ^* \{ Δ^{trans} \} + \text{effective} \bigr) \right]^{**} \\
    \intertext{In summary,}
    \label{il:Y2} γ^{[*]} \det \sF & = \left[ \det \bigl(\sH_{(X,Δ,γ)} \bigr) ⊗ \sO_Y \bigl(γ^* Δ^{trans} + \text{effective} \bigr) \right]^{**}.
  \end{align}
\end{prop}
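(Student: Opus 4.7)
The plan is to establish each of the three equalities locally on a suitably chosen big open subset. Since every sheaf appearing in the statement is reflexive and the determinants on both sides are Weil divisorial, it suffices to verify the identities after restriction to a big open set $V \subseteq Y$ (with image $U := \gamma(V) \subseteq X$ also big) on which the following holds: the restriction of $\gamma$ is a good cover in the sense of Definition~\ref{defn:goodcover}; the sheaves $\sF$, $\sG$, $\sH$ and $\sH_{(X,\Delta,\gamma)}$ are locally free; and each component of $\supp(\Delta + \Branch \gamma)$ meets the support of $\sF$ either in a generically transversal or in a generically tangential way. Using Construction~\ref{cons:E:8} and the definition of an adapted cover, such $U$ and $V$ exist. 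From now on I work locally near points $\vec x \in U$ and $\vec y \in V$, in coordinates as in Setting~\ref{setting:3-1}.

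For \eqref{il:Y0}, I argue componentwise on $\lfloor \Delta \rfloor$. Fix a component $D \subseteq \supp \lfloor \Delta \rfloor$ and work near a general point $\vec x \in D$. If $\sF$ is tangent to $D$, then $\sF \subseteq \sT_X(-\log D)$ near $\vec x$, and since no other component of $\lfloor \Delta \rfloor$ passes through $\vec x$, we have $\sG = \sF$ near $\vec x$ and nothing changes. If instead $\sF$ is transversal to $D$, then the projection onto the normal direction of $D$ gives a short exact sequence of locally free sheaves
\begin{equation*}
  0 \;\to\; \sG \;\to\; \sF \;\to\; \sO_D \;\to\; 0,
\end{equation*}
so $\det \sG = \det \sF \otimes \sO_X(-D)$ locally. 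Since only transversal components of $\lfloor \Delta \rfloor$ drop the determinant, and these are precisely the components of $\lfloor \Delta^{trans} \rfloor$, taking reflexive hulls yields \eqref{il:Y0}.

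For \eqref{il:Y1}, by Item~\ref{il:X2-1} of Remark~\ref{rem:X2} the inclusion $\sH_{(X,\Delta,\gamma)} \subseteq \sH$ is an isomorphism away from $\supp \gamma^* \{\Delta\}$, so only components of $\supp \gamma^* \{\Delta\}$ can contribute. Fix a general point $\vec y$ on such a component, whose image $\vec x$ lies on a unique fractional component $D_0 \subseteq \{\Delta\}$ with coefficient $\delta_0 \in (0,1)$. If $\sF$ is tangent to $D_0$ at $\vec x$, Lemma~\ref{lem:trafalgar}\ref{il:napoleon} shows that local generators of $\sF$ pull back into $\sT_Y(-\log \Delta_\gamma) \subseteq \sT_{(X,\Delta,\gamma)}$, so the saturated image $\sH$ is already contained in $\sT_{(X,\Delta,\gamma)}$ near $\vec y$, and there is no contribution. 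If $\sF$ is transversal to $D_0$, Lemma~\ref{lem:trafalgar}\ref{il:nelson} together with the explicit description in \eqref{eq:cvbcu} implies that, after choosing a local basis $\sigma_1, \ldots, \sigma_r$ of $\sF$ with $\sigma_1$ transversal to $D_0$, the quotient $\sH / \sH_{(X,\Delta,\gamma)}$ is locally isomorphic to $\sO_Y / (y_0^{m \delta_0})$, and contributes exactly $m \delta_0 \cdot \{y_0 = 0\} = \gamma^* (\delta_0 D_0)$ to the determinant. Summing over all such components and noting that $\gamma^* \{\Delta^{trans}\}$ is precisely the sum of these local contributions proves \eqref{il:Y1} with the indicated effective correction.

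The summary formula \eqref{il:Y2} follows directly: reflexive pull-back of \eqref{il:Y0} yields $\gamma^{[*]} \det \sF = \left[\det \sH \otimes \sO_Y(\gamma^* \lfloor \Delta^{trans} \rfloor)\right]^{**}$, and substituting \eqref{il:Y1} together with $\gamma^* \lfloor \Delta^{trans} \rfloor + \gamma^* \{\Delta^{trans}\} = \gamma^* \Delta^{trans}$ gives the stated identity.

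The main obstacle lies in the local bookkeeping of paragraph three, where one has to translate the abstract intersection $\sH \cap \sT_{(X,\Delta,\gamma)}$ into the coordinate description of Remark~\ref{rem:localtgts} and confirm that the transversal fractional components contribute exactly $\gamma^* \{\Delta^{trans}\}$ rather than an a priori larger effective divisor; this relies crucially on Lemma~\ref{lem:trafalgar}, which pins down how pull-backs of transversal, respectively tangential, vector fields sit inside the adapted tangent sheaf.
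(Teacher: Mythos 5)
Your proof is correct and follows essentially the same approach as the paper: reduce to a big open set where $\gamma$ is a good cover, then use the local coordinate description from Remark~\ref{rem:localtgts} together with Lemma~\ref{lem:trafalgar} to control the difference between $\sH$ and $\sH_{(X,\Delta,\gamma)}$ near $\supp\gamma^*\{\Delta^{trans}\}$. You spell out \eqref{il:Y0} and the local bookkeeping for \eqref{il:Y1} in more detail than the paper (which dismisses \eqref{il:Y0} as elementary), but the route is the same.
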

\begin{proof}
  Equation~\eqref{il:Y0} is elementary.  Equation~\eqref{il:Y2} is a combination
  of \eqref{il:Y0} and \eqref{il:Y1}.  It remains to prove \eqref{il:Y1}.  As an
  equation between reflexive sheaves, \eqref{il:Y1} can therefore be checked on
  a big open set.  We may therefore assume that $γ$ is a good cover.  Recalling
  from Item~\ref{il:X2-1} of Remark~\ref{rem:X2} that the sheaves
  $\sT_{(X,Δ,γ)}$ and $γ^{[*]} \sT_X$ agree away from the support of
  $γ^* \{Δ\}$, it will suffice to understand the difference between $\sH$ and
  its subsheaf $\sH_{(X,Δ,γ)}$ near a given point $\vec y$ in the support of
  $γ^* \{Δ^{trans}\}$.  There, the statement follows from the local description
  \eqref{eq:cvbcu} of Remark~\ref{rem:localtgts}, using Item~\ref{il:nelson} of
  Lemma~\ref{lem:trafalgar} as we have done in the proof of
  Proposition~\ref{prop:sddffg}.
\end{proof}

%
%
\svnid{$Id: 05-mainResult.tex 189 2016-10-28 12:52:40Z taji $}

\section{Fractional semipositivity}
\label{sect:semipositivity}
\subversionInfo
\approvals{Behrouz & yes \\ Benoît & yes \\ Stefan & yes}

A celebrated result of Miyaoka, \cite[Cor.~8.6]{Miy85}, shows that for a smooth
projective variety $X$ (or more generally a normal projective variety with only
canonical singularities) positivity properties of the canonical sheaf $ω_X$ are
deeply related to those of the sheaf of (pluri-)differential forms.  More
precisely, if $K_X$ is pseudo-effective, then, for every positive integer $m$
and ample divisor $H⊂ X$, the sheaf $(Ω_X^1)^{⊗ m}$ is semipositive with
respect to $H$.  In other words, $c_1(\sQ)·[H]^{n-1} ≥ 0$, for all coherent
quotients $\sQ$ of $(Ω_X^1)^{⊗ m}$.  When $c_1(X)=0$ or $<0$, these results
can be traced back to Yau's theorem on the existence of Kähler-Einstein metrics,
\cite{MR0451180}.  Miyaoka's approach, on the other hand, is purely algebraic
and involves deep and delicate characteristic $p$ methods.  Campana and Păun
extend Miyaoka's results to the context of pairs.  Before we state their result
in Theorem~\ref{thm:orbSemPos} below, we recall the definition of generic
semipositivity with respect to an adapted cover.

\begin{defn}[\protect{Generic semipositivity w.r.t.\ an adapted cover, cf.~\cite[Def.~1.7]{CP13}}]
  Let $(X,Δ)$ be a projective pair where $X$, and let $γ : Y → X$ be an adapted
  morphism that is Galois with group $G$.  We say that $Ω^{[1]}_{(X,Δ,γ)}$ is
  \emph{$γ$-generically semipositive} if it is generically semipositive with
  respect to any ample divisor of the form $γ^*(\text{ample})$.
\end{defn}

\begin{rem}[Intersection with nef classes]\label{rem:polbynef}
  Nef divisors are limits of ample divisors.  If $Ω^{[1]}_{(X,Δ,γ)}$ is
  $γ$-generically semipositive, continuity of intersection products therefore
  implies that $c_1(\sQ)·[γ^*N]^{n-1} ≥ 0$, for all coherent quotients $\sQ$ of
  $(Ω^{[1]}_{(X,Δ,γ)})^{⊗ m}$ and all nef divisors $N ∈ ℚ\Div(X)$.
\end{rem}

\begin{thm}[\protect{Generic semipositivity of $Ω^{[1]}_{(X,Δ,γ)}$, cf.~\cite[Thm.~2.1]{CP13}}]\label{thm:orbSemPos}
  Let $(X,Δ)$ be a log canonical, projective pair, and let $γ : Y → X$ be an
  adapted cover that is Galois with group $G$.  If $K_X+Δ$ is pseudo-effective,
  then $Ω^{[1]}_{(X,Δ,γ)}$ is $γ$-generically semipositive.
\end{thm}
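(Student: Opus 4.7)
The plan is to argue by contradiction following the Miyaoka and Bogomolov--McQuillan--Campana--Păun strategy for foliations: from a hypothetical destabilising subsheaf, manufacture a foliation $\sF$ on $X$, invoke algebraic integrability to obtain rationally connected leaves, and use a covering family of curves in a general leaf to contradict the pseudo-effectivity of $K_{X}+\Delta$. Assume therefore that $\Omega^{[1]}_{(X,\Delta,\gamma)}$ is not $\gamma$-generically semipositive: there exist an ample divisor $A$ on $X$ and, with $H:=\gamma^{*}A$, a saturated subsheaf of $\sT_{(X,\Delta,\gamma)}$ with strictly positive $H$-slope. The maximal term $\sH$ of the Harder--Narasimhan filtration of $\sT_{(X,\Delta,\gamma)}$ is canonical and hence $G$-invariant by Remark~\ref{rem:X2}\,(\ref{il:X2-3}); it satisfies $\mu_{H}^{\min}(\sH)=\mu_{H}(\sH)>0$ and $\mu_{H}^{\max}\bigl(\sT_{(X,\Delta,\gamma)}/\sH\bigr)<\mu_{H}(\sH)$. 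Using $G$-equivariance and the inclusions of Remark~\ref{rem:X2} one descends $\sH$ to a saturated subsheaf $\sF\subseteq\sT_{X}$ such that $\sH_{(X,\Delta,\gamma)}:=\gamma^{[*]}\sF\cap\sT_{(X,\Delta,\gamma)}$ agrees with $\sH$ on a big open set. Proposition~\ref{prop:xad}\,(\ref{il:Y2}) then translates the positivity of $\sH$ into the inequality $\bigl(c_{1}(\sF)-\Delta^{trans}\bigr)\cdot A^{n-1}>0$ on $X$.

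The crucial step is to show that $\sF$ is closed under the Lie bracket. Invoke Proposition~\ref{prop:sddffg}: the pulled-back O'Neil tensor factors through a morphism $\alpha\colon \sH_{(X,\Delta,\gamma)}^{[2]}\to\bigl(\sT_{(X,\Delta,\gamma)}/\sH_{(X,\Delta,\gamma)}\bigr)^{**}$. Since $\sH_{(X,\Delta,\gamma)}=\sH$ generically, the target has $\mu_{H}^{\max}<\mu_{H}(\sH_{(X,\Delta,\gamma)})$; in characteristic zero symmetric powers of semistable sheaves remain semistable, so $\mu_{H}^{\min}\bigl(\sH_{(X,\Delta,\gamma)}^{[2]}\bigr)=2\mu_{H}(\sH_{(X,\Delta,\gamma)})>\mu_{H}(\sH_{(X,\Delta,\gamma)})$. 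The standard slope comparison forces $\alpha=0$, and the injectivity of the auxiliary map $\beta$ in Proposition~\ref{prop:sddffg}, together with $G$-equivariance and passage to the big open set where $\sH_{(X,\Delta,\gamma)}$ and $\gamma^{[*]}\sF$ coincide, yields the vanishing of the O'Neil tensor of $\sF$. Hence $\sF$ is a foliation on $X$.

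Next, invoke the algebraic-integrability theorem for positive-slope foliations (Campana--Păun, after Bogomolov--McQuillan): $\sF$ is algebraically integrable and its general leaf is rationally connected. Consequently there is a covering family of curves $(C_{t})_{t\in T}$ tangent to $\sF$, sweeping out a general leaf, which one may take to avoid any prescribed small set in the sense of Notation~\ref{not:curves}. Because a general $C_{t}$ lies in a rationally connected leaf, $c_{1}(\sF)\cdot C_{t}$ is strictly positive and in fact grows with the rational connectivity; combined with the Chern class identities of Proposition~\ref{prop:xad}\,(\ref{il:Y2}) and Remark~\ref{rem:X1}\,(\ref{il:X1-4}), together with the log-canonical hypothesis on $(X,\Delta)$ used to control the non-transversal contribution $\Delta^{ntrans}\cdot C_{t}$, a direct intersection-theoretic computation yields $(K_{X}+\Delta)\cdot C_{t}<0$ for generic $t$. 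By Remark~\ref{rem:polbynef}, applied to the nef class associated with this movable family, this contradicts the pseudo-effectivity of $K_{X}+\Delta$.

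The main obstacle is the integrability step: the classical slope argument, which shows that the maximally destabilising subsheaf of $\sT_{X}$ is a foliation, does not transfer directly, because $\sT_{(X,\Delta,\gamma)}$ differs from both $\sT_{Y}(-\log\Delta_{\gamma})$ and $\gamma^{[*]}\sT_{X}(-\log\lfloor\Delta\rfloor)$ precisely over the branch and fractional loci of $\gamma$; it is exactly Proposition~\ref{prop:sddffg} that rescues the comparison. A secondary delicate point is the clean curve-intersection computation with fractional boundary, where the log-canonical hypothesis and the Chern class bookkeeping of Proposition~\ref{prop:xad} must be used in concert to keep all error terms non-negative.
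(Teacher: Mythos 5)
The first half of your plan --- extracting a $G$-invariant maximal destabilising subsheaf of $\sT_{(X,\Delta,\gamma)}$, descending it to a saturated subsheaf $\sF\subseteq\sT_X$, proving integrability by the slope argument through Proposition~\ref{prop:sddffg}, and doing the Chern-class bookkeeping with Proposition~\ref{prop:xad} --- is exactly the content of the paper's Theorem~\ref{thm:exMor}, and that part of your proposal is sound. The genuine gap is in how you close the argument. The algebraicity criterion (Kebekus--Sol\'a Conde--Toma, after Bogomolov--McQuillan) gives algebraic leaves with rationally connected closures through a general Mehta--Ramanathan curve, but rational connectedness of the leaves cannot by itself contradict pseudo-effectivity of $K_X+\Delta$: in the pair setting a variety swept out by rational curves can perfectly well have $K_X+\Delta$ pseudo-effective (already $\bP^1$ with a boundary of degree at least $2$), so the asserted ``direct intersection-theoretic computation'' yielding $(K_X+\Delta)\cdot C_t<0$ does not exist, and Remark~\ref{rem:polbynef} (which concerns quotients of adapted differentials paired with pull-backs of nef divisors) is not the statement you would need. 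What the destabilising subsheaf actually produces, after Proposition~\ref{prop:xad}, is the inequality $[\sT_{X/Z}]\cdot[C_t]>[\Delta^{horiz}]\cdot[C_t]$ for a family of curves dominating $X$ and avoiding small sets, where $\psi:X\dasharrow Z$ is the essentially equidimensional map defined by the leaves; by Lemma~\ref{lem:relTx} this is equivalent to $[K_{X/Z}+\Delta^{horiz}-\Ramification\psi]\cdot[C_t]<0$.

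The missing ingredient is the paper's Theorem~\ref{thm:neat}: for a log canonical pair with $K_X+\Delta$ pseudo-effective and an essentially equidimensional map with connected fibres, one has $[K_{X/Z}+\Delta^{horiz}-\Ramification\psi]\cdot[C_t]\geq 0$ for curves avoiding small sets. This is not formal. It is where the deep positivity input enters, namely twisted weak positivity for direct images of relative log pluricanonical sheaves (Theorem~\ref{thm:pdis}, via Fujino), applied after passing to a strongly adapted cover of $Z$ branched along $\OrbiBranch\psi$ and suitable resolutions; and it is also there, not in any curve computation on the leaves, that the log-canonicity of $(X,\Delta)$ is actually used, through the adjunction discrepancies $a_i\geq -1$ and the hypotheses of the weak positivity theorem. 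Without this relative positivity statement, or a substitute of comparable depth controlling the contribution of $\psi^*K_Z$ and of the ramification against the movable curves, your contradiction does not go through.
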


Part II of this chapter is devoted to a proof of Theorem~\ref{thm:orbSemPos}.
After some preparatory sections, the proof is given in
Section~\vref{ssec:potosp}.

%
%
\svnid{$Id: 06-hyperbolicity.tex 189 2016-10-28 12:52:40Z taji $}

\section{Application to hyperbolicity}
\label{sect:hyperbol}
\subversionInfo
\approvals{Behrouz & yes \\ Benoît & yes \\ Stefan & yes}

As recalled in the introduction, the first step in proving Viehweg's
hyperbolicity Conjecture~\ref{conj:Vhypo} was carried out by Viehweg and Zuo in
Theorem~\ref{thm:VZ}, where it was shown that the variation in a smooth family
of canonically-polarised manifolds $f°: X° → Y°$ manifests itself as a
birationally-positive subsheaf $\sL$ of pluri-log forms.  The aim of this
section is to use Theorem~\ref{thm:orbSemPos} to extract bigness of the log
canonical divisor $K_Y+D$ from the positivity of $\sL$.  This is the content of
the following theorem of Campana and Păun.

\begin{thm}[\protect{Existence of pluri-log-canonical forms, cf.~\cite[Thm.~4.1]{CP13}}]\label{thm:CP-hypo}
  Let $(X,D)$ be a projective snc pair, where $D$ is reduced.  Let $N ∈ ℕ^+$ be
  a positive integer and let
  $$
  \sL ⊆ \bigotimes^N \, Ω^1_X\log (D)
  $$
  be an invertible subsheaf.  If $κ(X, \sL)= \dim X$, then $κ(K_X+D)=\dim X$.
\end{thm}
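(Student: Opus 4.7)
The plan is to argue by contradiction: assume $\kappa(K_X+D) < \dim X$ and deduce that $\mathcal{L}$ cannot be big. The strategy combines the generic semipositivity of $\Omega^1_X(\log D)$ granted by Theorem \ref{thm:logCP13} with a Harder-Narasimhan slope estimate and the duality between the pseudo-effective and movable cones of Boucksom-Demailly-Păun-Peternell.

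\textbf{Pseudo-effectivity of $K_X+D$.} To apply Theorem \ref{thm:logCP13}, one first needs $K_X+D$ pseudo-effective. The bigness of $\mathcal{L}$ provides many tensor sections of the log cotangent: the inclusion $\mathcal{L}^{\otimes k} \hookrightarrow \bigotimes^{kN}\Omega^1_X(\log D)$ gives $h^0\bigl(X, \bigotimes^{kN}\Omega^1_X(\log D)\bigr) \geq h^0(X, \mathcal{L}^{\otimes k})$, which grows like $k^{\dim X}$ as $k \to \infty$. By the logarithmic version of Miyaoka's uniruledness criterion, such non-vanishing is incompatible with the existence of a covering family of rational curves in $X\setminus D$, and therefore forces $K_X+D$ to be pseudo-effective.

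\textbf{Semipositivity and slope bound.} With pseudo-effectivity at hand, Theorem \ref{thm:logCP13} yields $\mu^{\min}_H\bigl(\Omega^1_X(\log D)\bigr) \geq 0$ for every ample divisor $H$ on $X$. Setting $n := \dim X$, all Harder-Narasimhan slopes of $\Omega^1_X(\log D)$ are non-negative and sum, weighted by ranks of the graded pieces, to $c_1(\Omega^1_X(\log D)) \cdot H^{n-1} = (K_X+D) \cdot H^{n-1}$. Hence $\mu^{\max}_H(\Omega^1_X(\log D)) \leq (K_X+D) \cdot H^{n-1}$. Since in characteristic zero tensor products of semistable sheaves are semistable (Ramanan-Ramanathan), this lifts to $\mu^{\max}_H\bigl(\bigotimes^N \Omega^1_X(\log D)\bigr) = N\mu^{\max}_H(\Omega^1_X(\log D))$. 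Applying the bound to the rank-one subsheaf $\mathcal{L}$ yields the key inequality
\[
c_1(\mathcal{L}) \cdot H^{n-1} \;\leq\; N(K_X+D) \cdot H^{n-1} \qquad \text{for every ample } H \text{ on } X.
\]

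\textbf{BDPP duality and conclusion.} The class $\alpha := N(K_X+D) - c_1(\mathcal{L})$ therefore pairs non-negatively with every $H^{n-1}$ for $H$ ample. The duality theorem of Boucksom-Demailly-Păun-Peternell, which identifies the pseudo-effective cone with the dual of the cone generated by classes of the form $H^{n-1}$ for $H$ ample, forces $\alpha$ to be pseudo-effective. Hence $N(K_X+D) = c_1(\mathcal{L}) + \alpha$ is the sum of a big class and a pseudo-effective one, and is itself big, contradicting the initial assumption. The main obstacle in this plan is the first step: Theorem \ref{thm:logCP13} delivers its decisive positivity only once pseudo-effectivity of $K_X+D$ is known, and the passage from bigness of $\mathcal{L}$ to pseudo-effectivity of $K_X+D$ requires the log-Miyaoka uniruledness input, a non-trivial result of the same spirit as the generic semipositivity theorems developed in Part II.
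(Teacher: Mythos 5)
Your slope computation is fine as far as it goes: granted pseudo-effectivity of $K_X+D$, Theorem~\ref{thm:logCP13} gives $\mu^{\min}_H\bigl(\Omega^1_X(\log D)\bigr)\geq 0$, hence $\mu^{\max}_H \leq (K_X+D)\cdot H^{n-1}$, and the tensor-product formula for maximal slopes then yields $c_1(\sL)\cdot H^{n-1} \leq N\,(K_X+D)\cdot H^{n-1}$ for every ample $H$ on $X$. The decisive step, however, is wrong: the duality theorem of Boucksom--Demailly--P\u{a}un--Peternell does \emph{not} identify the pseudo-effective cone with the dual of the cone generated by the classes $H^{n-1}$, $H$ ample on $X$; it is dual to the cone of \emph{movable} curve classes, i.e.\ pushforwards $\mu_*(\widetilde H_1\cdots \widetilde H_{n-1})$ from arbitrary birational models $\mu:\widetilde X \to X$, which in dimension $\geq 3$ is strictly larger than the cone of complete-intersection classes on $X$ itself. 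So non-negativity against all $H^{n-1}$ does not give pseudo-effectivity of $N(K_X+D)-c_1(\sL)$. Nor can you repair this by running your argument on models: on $\widetilde X$ one has $\mu^*\sL \subseteq \bigotimes^N \Omega^1_{\widetilde X}(\log \widetilde D)$ only after enlarging the boundary by the exceptional divisors, and $K_{\widetilde X}+\widetilde D = \mu^*(K_X+D)+E$ with $E$ effective, so the resulting inequality against $\mu_*(\widetilde H^{n-1})$ carries the error term $N\,E\cdot \widetilde H^{n-1}\geq 0$ with the wrong sign. This is precisely the difficulty the paper's proof is built to circumvent: it perturbs $D$ via Lemma~\ref{lem:perturb}, runs the MMP of \cite{BCHM10} on the klt pairs $(X,G_m)$, applies the \emph{fractional} semipositivity Theorem~\ref{thm:orbSemPos} (not merely Theorem~\ref{thm:logCP13}) on a resolution of the resulting birational map, and converts the intersection-number inequality into a volume bound via the Teissier inequality, concluding bigness in the limit. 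What your cone-duality step implicitly assumes is the later, stronger result of \cite{CP15} (pseudo-effectivity, i.e.\ positivity against all movable classes, of quotients of the orbifold cotangent sheaf); the paper explicitly remarks that it is exactly that result which renders the MMP redundant --- it is not a consequence of Theorem~\ref{thm:logCP13}.

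A secondary problem is your first step. The implication ``$K_X+D$ not pseudo-effective $\Rightarrow$ $X\setminus D$ is covered by rational curves'' is not a correct form of any logarithmic Miyaoka criterion: the complement of an ample boundary can be affine and contain no complete curves at all (e.g.\ $\bP^2$ minus a conic, where $K_X+D$ is anti-ample), so the contradiction you aim for cannot be set up this way. The genuine statement needed here --- pseudo-effectivity of the perturbed divisors $K_X+D+\frac{1}{m}B_D$ in the presence of the big subsheaf $\sL$ --- is nontrivial and is exactly what the paper imports as item (5) of Lemma~\ref{lem:perturb}, with references to \cite{CP13} and \cite{Taji16}. Taking it as an input is legitimate (the paper does), but your proposed derivation of it would not work as written.
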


\subsection{Preparation for the proof of Theorem~\ref*{thm:CP-hypo}}
\approvals{Behrouz & yes \\ Benoît & yes \\ Stefan & yes}

Before we sketch the proof of Theorem~\ref{thm:CP-hypo} we gather some technical
details in the following lemma, whose proof is contained in the arguments
of~\cite[Sect.~4]{CP13} or those of~\cite[Thm~5.2]{Taji16}
and~\cite[Claim~5.2.1]{Taji16}.
 
\begin{lem}\label{lem:perturb}
  Setting as in Theorem~\ref{thm:CP-hypo}.  If $κ(X,\sL)=\dim X$, then there
  exists a $ℚ$-divisor $B_D ∈ ℚ\Div(X)$, a positive integer $M$, and for
  every $m≥M$ a $ℚ$-divisor $G_m ∈ ℚ\Div(X)$ such that the following holds.
  \begin{enumerate}
  \item The divisor $G_m$ is big.  Its round-down $\lfloor G_m \rfloor$ is zero.
  \item\label{il:xb} There exists a $ℚ$-linear equivalence
    $D + \frac{1}{m}B_D \sim_{ℚ} G_m$.
  \item The divisors $D+\frac{1}{m}B_D$ and $G_m$ have snc support.
  \item\label{il:xd} The pair $(X,G_m)$ is klt.  The pair
    $\bigl(X,D+\frac{1}{m}B_D\bigr)$ is dlt.
  \item The divisor $K_X+D+\frac{1}{m}B_D$ is pseudo-effective.  \qed
  \end{enumerate}
\end{lem}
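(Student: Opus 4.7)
The plan is to use Kodaira's lemma on the big line bundle $\sL$ to produce auxiliary effective divisors, then apply standard perturbation and Bertini arguments to obtain $B_D$ and $G_m$ with the required snc, klt/dlt, and round-down properties. The pseudo-effectivity condition then follows from Kodaira's decomposition together with a logarithmic version of the Boucksom--Demailly--Păun--Peternell criterion applied to the hypothesis $\sL \subseteq (\Omega^1_X(\log D))^{\otimes N}$.

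First, Kodaira's lemma applied to $\sL$ produces an integer $k \geq 1$ and a decomposition
$$
k \cdot c_1(\sL) \sim_\mathbb{Q} H + F,
$$
with $H$ very ample and $F$ effective. By Bertini I may assume that the support of $H + F + D$ is simple normal crossings. I then define $B_D$ to be a suitable effective $\mathbb{Q}$-combination of the components of $H + F + D$, chosen so that $m D + B_D$ is $\mathbb{Q}$-linearly equivalent to an snc effective divisor all of whose coefficients lie in $[0, m)$; dividing by $m$ yields the desired representative $G_m$ with $\lfloor G_m \rfloor = 0$. The bigness of $G_m$, its snc support, and the dlt/klt conditions for the two pairs then follow once $M$ is chosen large enough that the fractional perturbation keeps all coefficients strictly below $1$.

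The serious point, and what I expect to be the main obstacle, is the pseudo-effectivity of $K_X + D + \frac{1}{m} B_D$. Since $\frac{1}{m} B_D$ is effective, this reduces to pseudo-effectivity of $K_X + D$, which in turn is a logarithmic analogue of the Boucksom--Demailly--Păun--Peternell criterion. Concretely, if $K_X + D$ were not pseudo-effective, then a movable covering family of curves $\{C_t\}$ with $(K_X + D) \cdot C_t < 0$ would exist; restricting the inclusion $\sL \hookrightarrow (\Omega^1_X(\log D))^{\otimes N}$ to a general $C_t$ would then force the big line bundle $\sL|_{C_t}$ to sit inside a vector bundle of negative total degree, and a Harder--Narasimhan argument in tandem with a Mehta--Ramanathan-type result for the restriction of semistable sheaves to curves in a movable family would yield the desired contradiction. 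Making this argument precise in the log setting, including the proper choice of the movable family and the slope bookkeeping, is precisely the technical content of \cite[Sect.~4]{CP13} and of the proof of \cite[Thm~5.2]{Taji16}, which the statement of the lemma already cites as the source of the construction.
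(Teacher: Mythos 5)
Your construction of $B_D$ and $G_m$ via Kodaira's lemma plus Bertini is plausible and in the spirit of what \cite[Sect.~4]{CP13} and \cite[Thm~5.2]{Taji16} do, and the observation that $M$ must be taken large enough to keep the klt/dlt conditions is correct.  The weak point is item~5.

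Your reduction ``since $\frac{1}{m}B_D$ is effective, pseudo-effectivity of $K_X + D + \frac{1}{m}B_D$ reduces to pseudo-effectivity of $K_X+D$'' is logically sound when $B_D$ is chosen effective and disjoint from $D$, but it does not help: pseudo-effectivity of $K_X+D$ is essentially the substance of Theorem~\ref{thm:CP-hypo} itself, so one has merely replaced the target statement by an only marginally weaker one.  More importantly, the slope argument you then sketch does not close the gap.  If a movable family $(C_t)$ existed with $(K_X+D)\cdot C_t < 0$, then $\bigl(\Omega^1_X(\log D)\bigr)^{\otimes N}\big|_{C_t}$ has negative total degree, while $\sL|_{C_t}$ is a sub-line bundle of positive degree.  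This is not a contradiction: it only shows the restricted bundle is unstable, since a positive-degree sub-line bundle is perfectly compatible with negative total degree whenever the quotient drops the slope.  A Mehta--Ramanathan-type restriction theorem does not repair this, because $\Omega^1_X(\log D)$ is not assumed semistable, and the Harder--Narasimhan filtration of its tensor power restricted to $C_t$ gives you no slope bound on the quotients.

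What is actually needed is either a run of the log MMP for $(X,G_m)$ with a careful analysis of the Mori fibre space alternative using the orbifold generic semipositivity package (Theorems~\ref{thm:orbSemPos}, \ref{thm:neat}, \ref{thm:exMor}), or an appeal to the much stronger pseudo-effectivity of orbifold cotangent quotients established in \cite{CP15}.  Both of these are genuinely substantial inputs, not a restriction-to-curves slope count.  So while your construction of the divisors is fine, the pseudo-effectivity item~\ref{il:xd} cannot be dispatched by the Harder--Narasimhan/Mehta--Ramanathan argument you propose; it is precisely the content that the cited sources spend their effort on, and your sketch gives an incorrect account of why it is true.
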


\subsection{Proof of Theorem~\ref*{thm:CP-hypo}}
\approvals{Behrouz & yes \\ Benoît & yes \\ Stefan & yes}
\CounterStep

Let $M$ be the number and let $B_D$ and $(G_m)_{m ∈ ℕ^+}$ be the divisors of
Lemma~\ref{lem:perturb}.  Choose an ample divisor $A ∈ \Div(X)$, and choose a
divisor $L ∈ \Div(X)$ that represents $\sL$.  Write $n := \dim X$.

\begin{rem}\label{rem:xa}
  As $\sL$ is big, Kodaira's lemma \cite[Prop.~2.2.6]{Laz04-I} implies that for
  any sufficiently positive integer $a$, we have $a·L ≥ A$.
\end{rem}

The next step sets the stage for the proof.

\subsubsection*{Step 1: Minimal models for $(X,G_m)$}
\approvals{Behrouz & yes \\ Benoît & yes \\ Stefan & yes}

The properties listed in Lemma~\ref{lem:perturb} allow us to
use~\cite[Thm.~1.1]{BCHM10} and to conclude that the log-minimal models program
for the pairs $(X,G_m)$ terminate.  In other words, for every number $m$, there
exists a birational map $π_m: X \dashrightarrow X_m'$ consisting of a finite
number of flips and divisorial contractions, such that $(X_m', G_m')$ is klt and
$K_{X_m'}+G_m'$ is nef, where $G_m'$ denotes the strict transform of $G_m$.

We resolve the indeterminacies of $π_m$ by blowing up.  More precisely, choose
log resolutions $μ_m$ of the pair $(X, D+ B_D)$ that fit into a commutative
diagram of birational maps and morphisms as follows,
$$
\xymatrix{
  && \wtilde X_m \ar@/_2mm/[dll]_{μ_m} \ar@/^2mm/[drr]^{\wtilde{π}_m} \\
  X \ar@{-->}[rrrr]_{π_m\text{, MMP for }(X,D_m)} &&&& X_m'.
}
$$
The following will then hold.
\begin{enumerate}
\item The variety $\wtilde X_m$ is smooth.
\item The $μ_m$-exceptional set is of pure codimension one in $\wtilde{X}_m$.
  Let $E_m$ denote the associated reduced divisor.
\item The divisor $\wtilde D_m + \wtilde {B_D}_m + E_m$ has simple normal
  crossing support, where $\wtilde D_m$ and $\wtilde {B_D}_m$ denote the strict
  transforms $D$ and $B_D$, respectively.
\end{enumerate}
Write $\wtilde A_m:=μ_m^*A$, $\wtilde L_m = μ_m^*(L)$ and
$\wtilde \sL_m :=μ_m^*(\sL)$.  With this notation, the standard pull-back of
logarithmic forms then gives an inclusion
$$
\wtilde \sL_m ⊆ \bigotimes^N Ω^1_{\wtilde X_m}\log(\wtilde D_m+ E_m).
$$

\subsubsection*{Step 2: Volume estimates}
\approvals{Behrouz & yes \\ Benoît & yes \\ Stefan & yes}

As a second step in the proof of Theorem~\ref{thm:CP-hypo}, we aim to bound the
volume of $(X_m', G_m')$ from below.  More precisely, the following two claims
will be shown.

\begin{claim}\label{claim:keyineq0}
  Set
  $Δ_m := \wtilde D_m + {\textstyle\frac{1}{m}}\wtilde{B_D}_m+E_m ∈ ℚ\Div
  \bigl(\wtilde{X}_m \bigr)$ and consider the number
  $c := aN·n^{aN-1} ∈ ℕ^+$.  Then, the following inequalities hold for all
  $m ≥ M$, for every ample divisor $H_m ∈ \Div(X_m')$ and every $r∈ ℕ^+$,
  \begin{equation}\label{ineq:first}
    \left[ c· \bigl(K_{\wtilde X_m}+ Δ_m \bigr) - a· \wtilde L_m \right] ·
    \left[ \wtilde{π}_m^*(K_{X_m'}+G_m'+ {\textstyle\frac{1}{r}}H_m) \right]^{n-1} ≥ 0.
  \end{equation}
\end{claim}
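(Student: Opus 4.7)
The plan is to apply the generic semipositivity Theorem~\ref{thm:orbSemPos} to the snc pair $(\wtilde X_m, Δ_m)$ via an adapted Galois cover, and then translate the resulting inequality of Chern classes into the claim by projection formula. First I would verify the hypotheses of Theorem~\ref{thm:orbSemPos}: by construction $\wtilde X_m$ is smooth and $Δ_m$ has snc support with coefficients in $[0,1]$, so $(\wtilde X_m, Δ_m)$ is log canonical. For pseudo-effectivity of $K_{\wtilde X_m}+Δ_m$, observe that since $(X, D+\tfrac{1}{m}B_D)$ is dlt by Lemma~\ref{lem:perturb}~\ref{il:xd} and $E_m$ is the reduced $μ_m$-exceptional divisor, comparing log-discrepancies gives
$$
K_{\wtilde X_m}+Δ_m \;=\; μ_m^*\bigl(K_X+D+\tfrac{1}{m}B_D\bigr) + F,
$$
with $F$ effective $μ_m$-exceptional; pseudo-effectivity is then inherited from Lemma~\ref{lem:perturb}.

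Next, I would apply Proposition~\ref{prop:exsam} to produce an adapted Galois cover $γ\colon Y → \wtilde X_m$ for $(\wtilde X_m, Δ_m)$, and invoke Theorem~\ref{thm:orbSemPos} to conclude that $Ω^{[1]}_{(\wtilde X_m, Δ_m, γ)}$ is $γ$-generically semipositive. Since $\lfloor Δ_m\rfloor = \wtilde D_m+E_m$, the given inclusion rewrites as $\wtilde\sL_m ⊆ \bigotimes^N Ω^1_{\wtilde X_m}(\log \lfloor Δ_m\rfloor)$. Passing to the $a$-th tensor power and pulling back reflexively under $γ$, the inclusion \eqref{eq:X1} together with tensor-compatibility of reflexive pull-back on a big open set yields
$$
γ^{[*]}\bigl(\wtilde\sL_m^{⊗ a}\bigr) \;⊆\; \bigl(Ω^{[1]}_{(\wtilde X_m, Δ_m, γ)}\bigr)^{[⊗ aN]}.
$$

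Let $\sL^{\mathrm{sat}}$ denote the saturation of this subsheaf and $\sQ$ the torsion-free quotient. The divisor $K_{X_m'}+G_m'$ is nef by construction of the minimal model, so $K_{X_m'}+G_m'+\tfrac{1}{r}H_m$ is ample, and therefore $N_0 := γ^*\wtilde π_m^*(K_{X_m'}+G_m'+\tfrac{1}{r}H_m)$ is nef on $Y$. By Remark~\ref{rem:polbynef} we get $c_1(\sQ)\cdot N_0^{n-1} ≥ 0$. Using Remark~\ref{rem:X1}~\ref{il:X1-4} and $\rank Ω^{[1]}_{(\wtilde X_m, Δ_m, γ)} = n$, the standard tensor-product formula $c_1(\sE^{⊗ k}) = k\,(\rank\sE)^{k-1}\, c_1(\sE)$ gives
$$
c_1\Bigl(\bigl(Ω^{[1]}_{(\wtilde X_m, Δ_m, γ)}\bigr)^{[⊗ aN]}\Bigr) \;=\; aN\cdot n^{aN-1}\cdot γ^*(K_{\wtilde X_m}+Δ_m) \;=\; c\cdot γ^*(K_{\wtilde X_m}+Δ_m).
$$
Since saturation only adds an effective divisor $E'$, we have $c_1(\sL^{\mathrm{sat}}) = a\,γ^*\wtilde L_m + E'$ with $E'$ effective, and hence
$$
c_1(\sQ) \;=\; c\,γ^*(K_{\wtilde X_m}+Δ_m) - a\,γ^*\wtilde L_m - E'.
$$
Intersecting with $N_0^{n-1}$, the term $E'\cdot N_0^{n-1}$ is non-negative (effective vs.\ nef), so $\bigl[c\,γ^*(K_{\wtilde X_m}+Δ_m) - a\,γ^*\wtilde L_m\bigr]\cdot N_0^{n-1}\geq 0$. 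The projection formula then permits descent from $Y$ to $\wtilde X_m$ (dividing by $\deg γ$), producing \eqref{ineq:first}.

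The main obstacle is the careful bookkeeping in the middle paragraph, namely verifying that tensor powers of $Ω^1_{\wtilde X_m}(\log \lfloor Δ_m\rfloor)$ pull back reflexively into tensor powers of $Ω^{[1]}_{(\wtilde X_m, Δ_m, γ)}$; this relies on the fact that \eqref{eq:X1} is an inclusion compatible with tensor products on the big open subset where all sheaves in sight are locally free, after which reflexive extension handles the small locus. The other minor subtlety is the additivity of $c_1$ for the saturation sequence, which is legitimate because the quotient $\sQ$ is torsion-free and $c_1$ is additive on a big open set where all sheaves are locally free.
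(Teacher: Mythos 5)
Your proposal is correct and follows essentially the same route as the paper: pseudo-effectivity of $K_{\wtilde X_m}+Δ_m$ via the dlt/lc discrepancy comparison, Theorem~\ref{thm:orbSemPos} applied through an adapted Galois cover, the saturation of $γ^*(\wtilde\sL_m^{⊗ a})$ inside $\bigotimes^{aN}Ω^{[1]}_{(\wtilde X_m,Δ_m,γ)}$ with the torsion-free quotient intersected against the pull-back of the nef class, the $c_1$ computation via Corollary~\ref{cor:det}, and descent by the projection formula. The only (harmless) deviations are that you take the cover from Proposition~\ref{prop:exsam} where the paper uses a good (Kawamata-type) cover so that the adapted differentials are locally free, and that you write out the ``elementary computation'' of $c_1(\sQ_m)$ that the paper leaves implicit.
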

\begin{proof}[Proof of Claim~\ref{claim:keyineq0}]
  Pseudo-effectivity of $K_X+D+\frac{1}{m}B_D$ implies that the pull-back
  $μ_m^*(K_X+D+\frac{1}{m}B_D)$ is likewise pseudo-effective, and
  Item~\ref{il:xd} implies that so is $K_{\wtilde X_m}+ Δ_m$.  In particular,
  Theorem~\ref{thm:orbSemPos} (``Generic semipositivity'') applies to the pair
  $(\wtilde X_m, Δ_m)$.  To this end, consider a morphism
  $\wtilde{γ}_m : \wtilde Y → \wtilde X_m$ that is adapted to the divisor
  $Δ_m$.  Recall from \cite[Prop.~4.1.12]{Laz04-I} that we may assume that the
  the cover $\wtilde Y$ is good, in the sense of
  Definition~\ref{defn:goodcover}.  In particular, we may assume that the sheaf
  $Ω^1_{(\wtilde X_m, Δ_m, \wtilde{γ}_m)}$ of adapted differentials is locally
  free.  Next, consider the exact sequence of sheaves
  $$
  0 → \sF_m \longrightarrow \bigotimes^{aN} Ω^1_{(\wtilde X_m, Δ_m,
    \wtilde{γ}_m)} → \sQ_m → 0,
  $$
  where $\sF_m$ is the saturation of $γ_m^* \bigl(\wtilde \sL_m^{⊗ a} \bigr)$
  inside the middle term.  Thanks to Theorem~\ref{thm:orbSemPos} and
  Remark~\ref{rem:polbynef}, the torsion free sheaf $\sQ_m$ verifies the
  inequality:
  $$
  [\sQ_m]·\Bigl[\wtilde{γ}_m^* \Bigl(\underbrace{\wtilde{π}_m^*\bigl(K_{X_m'}
    +G_m' + {\textstyle\frac{1}{r}} H_m\bigr)}_{\text{big and
      nef}}\Bigr)\Bigr]^{n-1}≥ 0.
  $$
  Inequality~\eqref{ineq:first} now follows from an elementary computation of
  the first Chern class of $\sQ_m$, using the equation
  $c_1 \bigl( Ω^1_{(X,Δ,γ)} \bigr) = [γ^*(K_X+ Δ)]$ found in
  Corollary~\ref{cor:det}.  Claim~\ref{claim:keyineq0} is thus shown.
\end{proof}

\begin{claim}\label{claim:keyineq}
  Setting and notation as in Claim~\ref{claim:keyineq0}.  Then, the following
  inequalities hold for all $m ≥ M$, for every ample divisor
  $H_m ∈ \Div(X_m')$ and every $r∈ ℕ^+$,
  \begin{equation}\label{eq:claim-ineq}
    c · \vol \Bigl( K_{X_m'} + G'_m + {\textstyle\frac{1}{r}}H_m \Bigr) ≥
    \bigl[\wtilde A_m\bigr]· \Bigl[ \wtilde{π}_m^* \bigl( K_{X_m'} +G_m' + {\textstyle\frac{1}{r}}H_m \bigr) \Bigr]^{n-1}.
  \end{equation}
\end{claim}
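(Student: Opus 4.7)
The plan is to convert the intersection-number inequality of Claim~\ref{claim:keyineq0} into a volume bound on $X_m'$ by means of an MMP-comparison on $\wtilde X_m$. Write $\beta := K_{X_m'} + G_m' + \frac{1}{r}H_m$ and $\alpha := \wtilde \pi_m^*\beta$. A first reduction invokes Kodaira's lemma (Remark~\ref{rem:xa}): since $aL - A$ is effective, so is $a\wtilde L_m - \wtilde A_m$, and intersecting with the nef class $\alpha^{n-1}$ is non-negative. Combined with \eqref{ineq:first}, this yields
\[
c \cdot (K_{\wtilde X_m}+\Delta_m) \cdot \alpha^{n-1} \;\geq\; a \, \wtilde L_m \cdot \alpha^{n-1} \;\geq\; \wtilde A_m \cdot \alpha^{n-1},
\]
so it remains to prove $(K_{\wtilde X_m}+\Delta_m) \cdot \alpha^{n-1} \leq \vol(\beta)$.

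The heart of the proof is to exhibit a $\bQ$-linear equivalence
\[
K_{\wtilde X_m} + \Delta_m \;\sim_{\bQ}\; \wtilde \pi_m^*(K_{X_m'}+G_m') + E,
\]
with $E \geq 0$ effective and $\wtilde \pi_m$-exceptional, so that the projection formula kills $E \cdot \alpha^{n-1}$. I would construct $E$ as $E_1 + E_2$. The first piece $E_1$ is the effective, $\wtilde\pi_m$-exceptional divisor produced by the negativity lemma applied to the $(K_X + G_m)$-MMP $\pi_m$, yielding $\mu_m^*(K_X + G_m) = \wtilde \pi_m^*(K_{X_m'}+G_m') + E_1$. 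The second piece $E_2 := \sum_i (a_i+1)\,E_{m,i}$ collects the log-discrepancies of the dlt pair $(X, D + \frac{1}{m}B_D)$ (Item~\ref{il:xd}): each $\mu_m$-exceptional component $E_{m,i}$ has discrepancy $a_i > -1$, so $E_2$ is effective and $\mu_m$-exceptional. Adding back $E_m = \sum E_{m,i}$ and using the $\bQ$-linear equivalence $D + \frac{1}{m}B_D \sim_{\bQ} G_m$ from Item~\ref{il:xb} identifies $\mu_m^*(K_X+D+\frac{1}{m}B_D)$ with $\mu_m^*(K_X+G_m)$ as classes, so the two relations combine to the desired $E = E_1 + E_2$.

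The hard part will be arguing that $E_2$ is not merely $\mu_m$-exceptional but also $\wtilde \pi_m$-exceptional. For this I would use that $\pi_m$ is a birational contraction: since $\pi_m^{-1}$ extracts no divisor, every prime divisor $D' \subset X_m'$ is the birational image of a unique prime divisor $D \subset X$, and the strict transform $\wtilde D \subset \wtilde X_m$ is a prime divisor satisfying $\wtilde\pi_m(\wtilde D) = D'$ and $\mu_m(\wtilde D) = D$. If a $\mu_m$-exceptional prime component $F$ of $E_2$ were not $\wtilde\pi_m$-exceptional, then $\wtilde\pi_m(F) = D'$ for some prime divisor $D'$, and birationality of $\wtilde\pi_m$ would force $F = \wtilde D$, contradicting that $\mu_m(F)$ has codimension $\geq 2$ while $\mu_m(\wtilde D) = D$ is a divisor.

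With $E$ known to be $\wtilde \pi_m$-exceptional, the projection formula and the ampleness of $\frac{1}{r}H_m$ close the argument:
\[
(K_{\wtilde X_m}+\Delta_m) \cdot \alpha^{n-1} \;=\; (K_{X_m'}+G_m') \cdot \beta^{n-1} \;\leq\; \beta^n \;=\; \vol(\beta),
\]
the middle inequality because $\beta - (K_{X_m'}+G_m') = \frac{1}{r}H_m$ is ample and $\beta$ itself is ample (nef plus ample), and the final equality because the volume of an ample $\bQ$-divisor equals its top self-intersection. Chaining with the inequalities of the first paragraph yields \eqref{eq:claim-ineq}.
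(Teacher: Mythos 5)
Your proposal is correct and takes essentially the same route as the paper: the key point in both is that every $\mu_m$-exceptional divisor is $\wtilde{\pi}_m$-exceptional because $\pi_m^{-1}$ extracts no divisors, which together with Item~\ref{il:xb} gives $K_{\wtilde X_m}+Δ_m \sim_{\bQ} \wtilde{\pi}_m^*(K_{X_m'}+G_m')$ up to $\wtilde{\pi}_m$-exceptional divisors, after which the projection formula, Remark~\ref{rem:xa}, Inequality~\eqref{ineq:first} and the nef/ample comparison $(K_{X_m'}+G_m')·\beta^{n-1}\leq \beta^n=\vol(\beta)$ yield \eqref{eq:claim-ineq} exactly as in the paper's chain of inequalities. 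Two harmless remarks: effectivity of $E$ is never used (only exceptionality, which is why the paper writes ``$\pm$ exceptional''), so the negativity-lemma input is superfluous, and your claim that dlt forces $a_i>-1$ should be weakened to log canonical, $a_i\geq -1$, which is all you need.
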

\begin{proof}[Proof of Claim~\ref{claim:keyineq}]
  The birational map $π_m$ is a contraction.  In other words, its inverse
  $π_m^{-1}$ does not contract any divisors.  As a consequence, we see that any
  divisor in $\wtilde{X}_m$ which get contracted by $μ_m$ must also be
  contracted by $\wtilde{π}_m$.  In other words, the $μ_m$-exceptional set
  $E_m ⊆ \wtilde{X}_m$ is also $\wtilde{π}_m$-exceptional.  Together with
  Item~\ref{il:xb}, this observation yields the following $ℚ$-linear
  equivalence,
  \begin{equation}\label{eq:cbbjh}
    K_{\wtilde X_m} + Δ_m \sim_{ℚ} \wtilde{π}_m^* (K_{X_m'}+G'_m) \pm (\wtilde{π}_m\text{-exceptional}).
  \end{equation}
  Claim~\ref{claim:keyineq} now follows by putting things together.
  \begin{align*}
    \mathrlap{\bigl[\wtilde A_m\bigr]· \Bigl[ \wtilde{π}_m^* \bigl( K_{X_m'} +G_m' + {\textstyle\frac{1}{r}}H_m \bigr) \Bigr]^{n-1}}\qquad \\
 & ≤ a·\bigl[\wtilde L_m\bigr]· \Bigl[ \wtilde{π}_m^* \bigl( K_{X_m'} +G_m' + {\textstyle\frac{1}{r}}H_m \bigr) \Bigr]^{n-1} && \text{Remark~\ref{rem:xa}}\\
 & ≤ c·\bigl[K_{\wtilde X_m}+Δ_m\bigr]· \Bigl[ \wtilde{π}_m^* \bigl( K_{X_m'} +G_m' + {\textstyle\frac{1}{r}}H_m \bigr) \Bigr]^{n-1} && \text{Inequality~\eqref{ineq:first}}\\
 & ≤ c·\bigl[\wtilde{π}_m^* \bigl( K_{X_m'} +G_m' \bigr) \bigr]· \Bigl[ \wtilde{π}_m^* \bigl( K_{X_m'} +G_m' + {\textstyle\frac{1}{r}}H_m \bigr) \Bigr]^{n-1} && \text{Equation~\eqref{eq:cbbjh}} \\
 & ≤ c·\vol \Bigl( K_{X_m'} + G'_m + {\textstyle\frac{1}{r}}H_m \Bigr)
  \end{align*}
  Claim~\ref{claim:keyineq} is thus established.
\end{proof}

\subsubsection*{Step 3: Application of Claim~\ref{claim:keyineq}, end of proof}
\approvals{Behrouz & yes \\ Benoît & yes \\ Stefan & yes}

According to Teissier inequality for nef divisors, \cite[Thm.~1.6.1]{Laz04-I},
Inequality~\eqref{eq:claim-ineq} implies that
$$
c·\vol \Bigl( K_{X_m'} +G_m' + {\textstyle\frac{1}{r}}H_m \Bigr) ≥
\vol \Bigl(\wtilde A_m \Bigr)^{1/n} · \vol \Bigl(K_{X_m'}+G_m'+ {\textstyle\frac{1}{r}}H_m \Bigr)^{(n-1)/n}.
$$
In other words,
$$
\vol \Bigl( K_{X_m'}+G_m' + \frac{1}{r}H_m \Bigr) ≥ c^{-n} · \vol \bigl(\wtilde A_m \bigr).
$$
By taking $r→ \infty$, we find that
$$
\vol \bigl(K_{X_m'}+G_m' \bigr)≥ c^{-n}·\vol(\wtilde A_m).
$$
On the other hand, we know thanks to the negativity lemma in the minimal models
program, that $\vol(K_{X_m'}+G_m') = \vol(K_X+G_m)$ and as
$\vol(\wtilde A_m) = \vol(A)$, we have
$$
\vol \Bigl(K_X+D+ {\textstyle\frac{1}{m}}B_D\Bigr) ≥ c^{-n}·\vol(A).
$$
Theorem~\ref{thm:CP-hypo} now follows by taking $m→ \infty$.  \qed

\part{Proof of the semipositivity result}
%
%
\svnid{$Id: 07-positivity.tex 149 2016-08-19 06:52:03Z kebekus $}

\section{Positivity of relative dualising sheaves}
\subversionInfo
\approvals{Behrouz & yes \\ Benoît & yes \\ Stefan & yes}

As we shall see in Section~\ref{sect:failure}, the orbifold generic
semipositivity result, Theorem~\ref{thm:orbSemPos}, is proved by contradiction.
More precisely, given a pair $(X,Δ)$ with pseudo-effective $K_X+Δ$, and after
integrability considerations (Subsection~\ref{subsect:integrability}), the
existence of a subsheaf of $\sT_{(X,γ,Δ)}$ with positive slope leads to an
algebraic foliation on $X$.  The negativity properties of the relative canonical
sheaf of the rational map associated to this foliation brings about the required
contradiction to the pseudo-effectivity assumption of $K_X+Δ$.

\begin{thm}[\protect{\cite[Thm.~2.11]{CP13}}]
\label{thm:neat}
  Let $f : X \dasharrow Z$ be a rational map with connected fibres between
  normal, projective varieties.  Assume that $f$ is essentially
  equidimensional\footnote{See Definition~\vref{defn:esseqdim}.}, that $X$ is
  $ℚ$-factorial, that there exists a $ℚ$-Weil divisor $Δ$ on $X$ such that
  $(X,Δ)$ is log canonical, and that $K_X+Δ$ is pseudo-effective.  If
  $(C_t)_{t ∈ T}$ is a family of curves that dominates $X$ and avoids small
  sets\footnote{See Notation~\vref{not:curves}.}, then
  $$
  \bigl[ K_{X/Z}+Δ^{horiz}-\Ramification f \bigr]·[C_t] ≥ 0,\quad
  \text{for all $t ∈ T$}.
  $$
\end{thm}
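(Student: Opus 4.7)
First, using the identities $\Ramification f = f^*\OrbiBranch f$ (immediate from Definition~\ref{def:ramificationandbranch}) and $K_{X/Z}=K_X-f^*K_Z$, I would rewrite the target quantity as
\[
\bigl(K_{X/Z}+\Delta^{horiz}-\Ramification f\bigr)\cdot C_t \;=\; (K_X+\Delta)\cdot C_t \;-\; \bigl(\Delta^{vert}+f^*(K_Z+\OrbiBranch f)\bigr)\cdot C_t.
\]
The family $(C_t)$ dominates $X$, so its class lies in the interior of the movable cone of $N_1(X)$; by the Boucksom--Demailly--P\u{a}un--Peternell duality with pseudo-effective divisors, the pseudo-effectivity of $K_X+\Delta$ already yields $(K_X+\Delta)\cdot C_t \ge 0$. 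The theorem therefore amounts to showing that the corrective intersection $\bigl(\Delta^{vert}+f^*(K_Z+\OrbiBranch f)\bigr)\cdot C_t$ does not exceed $(K_X+\Delta)\cdot C_t$; equivalently, that the relative log-canonical divisor $K_{X/Z}+\Delta^{horiz}-\Ramification f$ itself pairs non-negatively with the movable class $[C_t]$.

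Next, I would reduce to a favourable birational model. By taking a log resolution, replacing $f$ by its Stein factorisation, and---if useful---passing to the adapted Galois cover $\gamma\colon Y\to X$ produced by Proposition~\ref{prop:exsam}, one can arrange that $f$ is an equidimensional morphism in local normal form (Construction~\ref{cons:E:8}) outside a set of codimension $\ge 2$, that $\supp(\Delta+\Ramification f)$ has snc, and that $\gamma^*\Delta$ is integral. Because $(C_t)$ \emph{avoids small sets} by hypothesis, none of these modifications affects the intersection numbers of interest, so we may freely work on the good model.

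On this model I would invoke positivity of direct images in the style of Viehweg, applied to the pluri-log-canonical sheaves
\[
f_*\sO_X\!\bigl(m(K_{X/Z}+\Delta^{horiz}-\Ramification f)\bigr) \;=\; f_*\sO_X\!\bigl(m(K_X+\Delta^{horiz}) - m\,f^*(K_Z+\OrbiBranch f)\bigr),
\]
combined with orbifold adjunction along the ramification, to produce the required lower bound on $C_t$. The main obstacle is precisely this last step: turning global pseudo-effectivity of $K_X+\Delta$ on $X$ into a pointwise inequality for the relative log-canonical divisor on curves of a covering family requires an orbifold-canonical-bundle-formula type comparison. The log-canonicity of $(X,\Delta)$ is what keeps the vertical discrepancies $\Delta^{vert}$ under control, the equidimensionality of $f$ is what makes the orbifold base $(Z,\OrbiBranch f)$ well-behaved and normal forms available, and the avoidance of small sets by $(C_t)$ is what permits us to ignore codimension-$\ge 2$ pathologies where these tools degenerate. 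This intricate combination is the technical heart of \cite[Thm.~2.11]{CP13}.
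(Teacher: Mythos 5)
Your outline points at the right tool (weak positivity of direct images à la Viehweg/Fujino) but misidentifies where and how it applies, and it rests on an identity that is in general false.

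\textbf{The identity $\Ramification f = f^*\OrbiBranch f$ is wrong.} By Definition~\ref{def:ramificationandbranch}, the coefficient of $\Branch f$ along a prime divisor $D \subset Z$ is the \emph{lcm} of the ramification indices of the prime divisors of $X$ lying over $D$. If two such divisors $\Delta_1, \Delta_2$ have distinct multiplicities $m_1 \ne m_2$, then $\mult_D\Branch f = \lcm(m_1,m_2)=:\ell$ and $f^*\OrbiBranch f$ has coefficient $m_i\cdot\frac{\ell-1}{\ell}$ along $\Delta_i$, while $\Ramification f$ has coefficient $m_i-1$; these agree only when $\ell = m_i$ for every $i$. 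So your opening rewriting of the target intersection number does not hold, and the framing ``the corrective term $(\Delta^{vert}+f^*(K_Z+\OrbiBranch f))\cdot C_t$ must not exceed $(K_X+\Delta)\cdot C_t$'' is not a valid reformulation of the statement.

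\textbf{The cover must be taken on the base $Z$, not on $X$.} You suggest invoking Proposition~\ref{prop:exsam} to pass to an adapted cover $\gamma\colon Y\to X$. The paper's proof instead uses Proposition~\ref{prop:exsam} to construct a \emph{strongly adapted cover $\beta\colon\what Z\to Z$ of the base} for the pair $\bigl(Z,\OrbiBranch f\bigr)$, then forms (and resolves) the fibre product $\tilde Z \times_Z \bar X$. This is the technical heart you flagged as the ``main obstacle'': a local normal form computation (Construction~\ref{cons:E:8}, applied in Step~4 of the paper's proof) shows that after this base change
$$
K_{\wtilde X^\circ/\wtilde Z^\circ} \sim_{\mathbb Q} (b^\circ\circ a^\circ)^*\bigl(K_{X^\circ/Z^\circ} - \Ramification f\bigr),
$$
i.e.\ the base change absorbs the $-\Ramification f$ term and turns the ill-behaved divisor $K_{X/Z}+\Delta^{horiz}-\Ramification f$ into an honest relative log-canonical divisor $K_{\wtilde X/\wtilde Z}+\wtilde\Delta^h$ of an lc pair. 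Without this step, applying weak positivity to $f_*\sO_X\bigl(m(K_{X/Z}+\Delta^{horiz}-\Ramification f)\bigr)$ directly does not work: the $-\Ramification f$ is not part of an lc boundary, and the object is not of the form treated by Viehweg--Fujino.

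\textbf{Where pseudo-effectivity of $K_X+\Delta$ actually enters.} Your observation that $[C_t]$ is a movable class and hence $(K_X+\Delta)\cdot C_t \ge 0$ by BDPP duality is correct, but this is not how pseudo-effectivity is used. In the paper, the log canonicity of $(X,\Delta)$ is used in the adjunction for the resolution $b$ to produce an effective lc boundary $\bar\Delta$ on $\bar X$ with $K_{\bar X}+\bar\Delta$ pseudo-effective; its restriction to a \emph{general $\bar f$-fibre} is then pseudo-effective, which is exactly the hypothesis needed to invoke Theorem~\ref{thm:pdis} (Fujino's twisted weak positivity) and conclude that $K_{\wtilde X/\wtilde Z}+\wtilde\Delta^h$ is pseudo-effective on $\wtilde X$. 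Only then is the class paired with the movable curve $\wtilde C_t := (b\circ a)^{-1}(C_t)$, and the projection formula transfers the inequality back to $C_t$. So the movable-curve argument is the last step, applied on the resolved and base-changed model, not on $X$.

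In short: you have correctly spotted that weak positivity is the engine, but the missing and decisive idea is the strongly adapted base change to $\what Z \to Z$ together with the normal-form computation that identifies $K_{\wtilde X/\wtilde Z}$ with the pull-back of $K_{X/Z}-\Ramification f$. That is the step that makes the Viehweg--Fujino positivity applicable.
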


\begin{rem}[$ℚ$-factoriality in Theorem~\ref*{thm:neat}]
  The assumption that $X$ is $ℚ$-factorial is posed for notational convenience.
  It implies that the intersection numbers in the displayed formula are
  well-defined.  Note, however, that almost all curves in the family
  $(C_t)_{t ∈ T}$ stay away from the singularities of $X$.  Restricting to these
  curves, and replacing $X$ with a suitable log-resolution, it is possible to
  obtain a more general result at the cost of additional and more complicated
  notation.
\end{rem}

Theorem~\ref{thm:neat} is a consequence of positivity results for direct images
of relative dualising sheaves, which we present in Theorem~\ref{thm:pdis} in the
form of a pseudo-effectivity result for the relative dualising sheaf.  These
results have a long history, cf.~\cite{MR2674856} for a survey in the setting
where $Δ = 0$.  In case where the coefficients of $Δ$ are of the form
$\frac{m-1}{m}$, the relevant positivity results for the push-forward of
$\sO_X \bigl( m·(K_{X/Z}+Δ) \bigr)$ appear in \cite[Sect.~9]{Lu02} and
\cite[Thm.~4.11]{Cam04}; the paper \cite{MR2449950} approaches the case where
$Δ$ is integral by analytic methods.  The general case, where the coefficients
of $Δ$ are arbitrary, is treated in \cite[Thm.~1.1]{Fujino14} and (again in the
analytic setting) in \cite[Cor.~5.2.1]{PT14}.  For notational convenience, we
choose \cite{Fujino14} as our main reference.

\begin{thm}[Pseudo-effectivity of relative dualising sheaves]\label{thm:pdis}
  Let $f : X → Z$ be a surjective morphism with connected fibres between smooth,
  projective varieties.  Let $Δ$ be a $ℚ$-divisor with snc support on $X$ such
  that $(X,Δ)$ is log canonical.  Assume further that the restriction of
  $K_{X/Z}+Δ$ to the general $f$-fibre is is pseudo-effective.  Then $K_{X/Z}+Δ$
  is pseudo-effective.
\end{thm}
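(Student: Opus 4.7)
The plan is to reduce to the weak positivity theorem for direct images of twisted relative log canonical sheaves and then to bootstrap weak positivity on $Z$ into pseudo-effectivity on $X$ via a standard evaluation-and-limit argument. This is essentially the route of Fujino~\cite{Fujino14}, and what follows is a sketch of how I would organise it.

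First I would observe that, since pseudo-effectivity is a closed condition, it suffices to prove that $K_{X/Z}+Δ + \varepsilon H$ is pseudo-effective for every $\varepsilon \in ℚ^+$ and some fixed ample divisor $H$ on $X$. The point of this perturbation is twofold. On the one hand, the restriction $(K_{X/Z}+Δ+\varepsilon H)|_F$ to a general fibre $F$ is big (pseudo-effective plus ample), so that for $m = m(\varepsilon)$ sufficiently divisible, the direct image sheaf
\[
\sE_{m,\varepsilon} \;:=\; f_* \sO_X\bigl( m(K_{X/Z}+Δ+\varepsilon H) \bigr)
\]
has positive rank on a dense open subset of $Z$. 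On the other hand, by passing to a log resolution of $(X, Δ+\varepsilon H)$ and slightly lowering coefficients (or replacing $H$ by an $\varepsilon'$-multiple), one can arrange that the perturbed pair remains log canonical and that the hypothesis of Fujino's theorem is met.

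The central step is to apply the weak positivity result \cite[Thm.~1.1]{Fujino14}: the sheaf $\sE_{m,\varepsilon}$ is weakly positive on $Z$. This is the hard part of the argument and is not something I would try to reprove in full; its proof rests on Hodge-theoretic semipositivity of Hodge bundles associated to variations of Hodge structure, extended to the log canonical setting via mixed Hodge modules, together with a delicate cyclic covering construction adapted to the fractional coefficients of $Δ$ (and careful control of the singularities of the resulting cover). Granting this, weak positivity means that for every ample $A$ on $Z$ and every positive integer $a$, the reflexive symmetric power $\widehat{\Sym}^{\,a} \sE_{m,\varepsilon} \otimes \sO_Z(A)$ is generically generated by global sections.

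To extract the desired pseudo-effectivity on $X$, I would use the natural multiplication map $f^{*} \widehat{\Sym}^{\,a} \sE_{m,\varepsilon} \to \sO_X\bigl( am(K_{X/Z}+Δ+\varepsilon H) \bigr)$, which is injective on a big open subset of $X$. Any global section of $\widehat{\Sym}^{\,a} \sE_{m,\varepsilon} \otimes \sO_Z(A)$ then pulls back to an effective divisor in the class $am(K_{X/Z}+Δ+\varepsilon H) + f^{*}A$. Dividing by $am$ gives
\[
K_{X/Z}+Δ+\varepsilon H + \tfrac{1}{am}\, f^{*}A \;\text{ is effective (up to $ℚ$-equivalence).}
\]
Letting $a\to\infty$ yields pseudo-effectivity of $K_{X/Z}+Δ+\varepsilon H$, and letting $\varepsilon\to 0$ concludes. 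The main obstacle is thus the invocation of Fujino's theorem: everything outside it is formal manipulation of direct images, but the weak positivity statement itself is a deep Hodge-theoretic result, and a self-contained argument would have to revisit the $C^{\infty}$-semipositivity of Hodge bundles together with Viehweg's covering trick.
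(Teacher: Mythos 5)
Your argument is correct and is essentially the proof given in the paper: both perturb by a general ample divisor $\varepsilon H$ to make the restriction to general fibres big, invoke Fujino's twisted weak positivity theorem \cite[Thm.~1.1]{Fujino14} for the direct image $f_*\sO_X\bigl(m(K_{X/Z}+\Delta+\varepsilon H)\bigr)$, push that positivity back through the evaluation map $f^*f_*\to\sO_X$, and let $\varepsilon\to 0$. The only inessential difference is that the paper picks $H$ a general very ample \emph{prime} divisor so that $\Delta+\varepsilon H$ automatically has snc support and the perturbed pair stays log canonical, which makes your extra step of passing to a log resolution unnecessary; and the paper outsources your ``take $a\to\infty$'' unpacking of weak positivity to Viehweg's Lemmas~2.15 and 2.16 in \cite{Viehweg95} and \cite[Rem.~7.6]{Fujino14}.
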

\begin{proof}
  Choose a very ample prime divisor $H$ on $X$ that is general in its linear
  systems.  For every sufficiently small rational number $0 < ε \ll 1$, the
  divisor $Δ+ε·H$ will then have snc support, the pair $(X,Δ+ε·H)$ will be log
  canonical and if $F ⊆ X$ is any general $f$-fibre, then $(K_{X/Z}+Δ+ε·H)|_F$
  will be big.  Choose a number $m \gg 0$ such that the divisor
  $m·(K_{X/Z}+Δ+ε·H)$ is integral and such that
  $$
  \sE := f_* \sO_X \bigl( m·(K_{X/Z}+Δ+ε·H) \bigr)
  $$
  is of positive rank.  The twisted weak positivity theorem of
  \cite[Thm.~1.1]{Fujino14} asserts that $\sE$ is weakly positive.  In other
  words, cf.~\cite[Sect.~7]{Fujino14}, there exists a dense, Zariski-open set
  $Y° ⊆ Y$ such that $\sE$ is weakly positive over $U$, in the sense of Viehweg,
  \cite[Defn.~2.11]{Viehweg95}.  Its pull-back $f^* \sE$ is then likewise weakly
  positive, \cite[Lem.~2.15]{Viehweg95}, and so is the target of the natural,
  non-trivial morphism
  $$
  f^* \sE = f^* f_* \sO_X \bigl( m·(K_{X/Z}+Δ+ε·H) \bigr) → \sO_X \bigl(
  m·(K_{X/Z}+Δ+ε·H) \bigr),
  $$
  cf.~\cite[Lem.~2.16]{Viehweg95}.  Since the target is invertible, it follows
  immediately from the definition of ``weakly positive'' that the $ℚ$-divisor
  $K_{X/Z}+Δ+ε·H$ is pseudo-effective, \cite[Rem.~7.6]{Fujino14}.  Conclude by
  taking the limit $ε → 0$.
\end{proof}

\subsection{Proof of Theorem~\ref*{thm:neat}}
\approvals{Behrouz & yes \\ Benoît & yes \\ Stefan & yes}

The proof of Theorem~\ref{thm:neat} essentially consists of two parts.  Part one
is focused on modifying the rational map $f: X\dasharrow Z$ and its subsequent
replacement by a morphism that fits the premise of Theorem~\ref{thm:pdis}.  This
is roughly the content of Step.~1--3 and finally Step.~4 (see
Consequences.~\ref{cons:psef3} and~\ref{cons:psef4}).  In Step.~5 it then
becomes evident that Theorem~\ref{thm:neat} is an immediate consequence of
Theorem~\ref{thm:pdis}.

\subsubsection*{Step 1: Resolution and base change}
\approvals{Behrouz & yes \\ Benoît & yes \\ Stefan & yes}

Following the construction steps outlined below, we construct a commutative
diagram of morphisms and maps as follows,
$$
\xymatrix{ %
  \wtilde{X} \ar[rrrrrr]^a_{\text{log resolution of fibre product}} \ar[d]_{\wtilde{f}} &&&&&& \bar{X} \ar[rrr]^b_{\text{resol.\ of $f$ and }(X,Δ)} \ar[d]_{\bar{f}} &&& X \ar@{-->}[d]^f \\
  \wtilde{Z} \ar[rrr]^{α}_{\text{strong log resol.}} &&& \what{Z} \ar[rrr]^{β}_{\substack{\text{strongly adpt.\ cover for}\\(Z,\, \OrbiBranch(f))}} &&& Z \ar@{=}[rrr] &&& Z.
}
$$
\begin{enumerate}
\item Choose a strong log resolution of the morphism $f$ and the pair $(X,Δ)$.
  We obtain a smooth variety $\bar{X}$ and birational morphism $b: \bar{X} → X$
  from a normal variety that is isomorphic over $\Defn(f) ∩ (X,Δ)_{\snc}$, such
  that $\bar{f} := f ◦ b$ is a morphism and such that the $b$-exceptional locus
  $E^{b}$ as well as $E^{b}+ b_*^{-1} Δ$ are divisors with simple normal
  crossing support.
  
\item Consider the divisor $\OrbiBranch(f)$ that was introduced in
  Definition~\vref{defn:RessEq}.  Proposition~\ref{prop:exsam} allows to choose
  a strongly adapted cover $β : \what{Z} → Z$ that is associated with the pair
  $\bigl( Z,\, \OrbiBranch(f)\bigr)$.  Since $β$ is finite,
  Construction~\ref{cons:pull} allows to consider the pull-back divisor
  $β^* \OrbiBranch (f)$.

\item Choose a strong log resolution of the pair
  $\bigl(\what{Z},\, β^* \OrbiBranch (f)\bigr)$.  We obtain a smooth variety
  $\wtilde{Z}$ and a birational morphism $α : \wtilde{Z} → \what{Z}$ that is
  isomorphic wherever $\bigl(\what{Z},\, β^* \OrbiBranch(f)\bigr)$ is snc.  Both
  the $α$-exceptional locus $E^{α}$ as well as
  $E^{α}+ α_*^{-1} \bigl(β^* \OrbiBranch (f) \bigr)$ are divisors with simple
  normal crossing support.
  
\item\label{cons:X4} Choose a strong log resolution of the fibre product
  $\wtilde{Z} ⨯_Z \bar{X}$.  We obtain a smooth variety $\wtilde{X}$.  Composed
  with the projection to the second factor, the resolution yields a generically
  finite morphism $a : \wtilde{X} → \bar{X}$.  Let $E^{b ◦ a} ⊆ \wtilde{X}$ be
  the union of those divisors that are contracted by $b ◦ a$.  Then, both
  $E^{b ◦ a}$ as well as $E^{b ◦ a} + a^* (b_*^{-1} Δ)$ are divisors with simple
  normal crossing support.
\end{enumerate}

\subsubsection*{Step 2: Open sets and local normal forms}
\approvals{Behrouz & yes \\ Benoît & yes \\ Stefan & yes}

\CounterStep Let $Z° ⊆ Z$ and $X° ⊆ f^{-1}(Z°)$ be the maximal open sets such
that the following holds.

\begin{enumerate}
\item The pairs $(X°,Δ+ \Ramification f)$ and $(Z°, \Branch f)$ are both snc.
\item The morphism $f|_{X°}$ is equidimensional and can locally be written in
  normal form, in the sense of Notation~\ref{not:localNormalForm}.
\item\label{il:q-4} Setting $\wtilde{Z}° := (β◦ α)^{-1}(Z°)$, the morphism
  $(β ◦ α)|_{\wtilde{Z}°} : \wtilde{Z}° → Z°$ is finite and can locally be
  written in normal form.
\item\label{il:q-5} Setting $\bar{X}° := b^{-1}(X°)$, the morphism
  $b° := b|_{\bar{X}°} : \bar{X}° → X°$ is isomorphic.  In particular,
  $\bar{f}|_{\bar{X}°} : \bar{X}° → Z°$ can locally be written in normal form.
\item\label{il:q-6} Setting $\wtilde{X}° := a^{-1}(\bar{X}°)$, the morphism
  $a° := a|_{\wtilde{X}°} : \wtilde{X}° → \bar{X}°$ is finite and can locally be
  written in normal form.
\end{enumerate}

\begin{obs}\label{obs:D0}
  Recall from Construction~\ref{cons:E:8} and from Zariski's main theorem that
  $Z°$ and $X°$ are big open sets of $Z$ and $X$, respectively.
\end{obs}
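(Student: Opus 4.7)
The plan is to exhibit both $Z°$ and $X°$ as finite intersections of big open subsets—one per defining condition—and then invoke the elementary fact that a finite intersection of big opens is big. Each condition in Step~2 individually defines such a big open set, and the work is purely organisational: for each condition I need to locate the failure locus and bound its codimension, then track how it propagates through the tower of finite and birational morphisms $\wtilde Z \to \what Z \to Z$ and $\wtilde X \to \bar X \to X$.

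For $Z°$, condition~(1) is standard: on the normal variety $Z$ the singular locus is small, and within $Z_{\reg}$ the non-snc locus of $\Branch f$ is of codimension at least two. For condition~(3) I split the failure locus into two pieces. \emph{Non-finiteness} of $\beta\circ\alpha$: since $\beta$ is a finite cover by Proposition~\ref{prop:exsam}, positive-dimensional fibres of $\beta\circ\alpha$ over a point $z \in Z$ occur exactly when the preimage of $z$ meets the $\alpha$-exceptional locus $E^{\alpha}\subseteq \wtilde Z$; because $\alpha$ is a strong log resolution, $\alpha(E^{\alpha})$ has codimension at least two in $\what Z$, and finiteness of $\beta$ preserves this codimension bound in $Z$. \emph{Failure of local normal form}: after deleting $E^{\alpha}$ the map $(\beta\circ\alpha)|_{\wtilde Z\setminus E^{\alpha}}$ is finite, hence equidimensional of relative dimension zero, so Construction~\ref{cons:E:8} provides a big open in the source on which the map is in normal form, whose complement has small image in $Z$.

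For $X°$, first observe that the containment $X° \subseteq f^{-1}(Z°)$ is compatible with bigness: since $f$ is essentially equidimensional, there is a big open $U \subseteq \Defn(f)$ on which $f|_U$ is equidimensional, and Remark~\ref{rem:piob} then gives that $U \cap f^{-1}(Z°)$ is big in $X$. Now I process the remaining four conditions in turn: (1) reduces to the non-snc locus of $(X,\Delta+\Ramification f)$ being small, exactly as for $Z°$; (2) follows from Construction~\ref{cons:E:8} applied to $f|_U$; (4) uses the standard fact that the non-isomorphism locus of a birational morphism between normal varieties has image of codimension $\geq 2$; and (5) mirrors the argument for~(3) above—$a$ is proper and generically finite, so any $a$-exceptional divisor in $\wtilde X$ must map under $a$ to a subset of $\bar X$ of codimension at least two, whose further image under the birational morphism $b$ remains of codimension $\geq 2$ in $X$, and the normal-form piece is once again supplied by Construction~\ref{cons:E:8}.

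The main obstacle is therefore not conceptual but notational: keeping the five conditions separate, identifying the correct failure locus for each, and verifying in each case that its image in $Z$ (resp.\ $X$) is small. A single general principle is used repeatedly—images of codimension-$\geq 2$ subsets under finite or generically finite proper morphisms remain of codimension $\geq 2$—together with the two key inputs Construction~\ref{cons:E:8} and Remark~\ref{rem:piob} that the excerpt highlights.
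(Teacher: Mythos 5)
Your proposal is correct and follows essentially the same route the paper has in mind: the Observation is justified there only by citing Construction~\ref{cons:E:8} (for the equidimensionality/normal-form and smoothness conditions) together with Zariski's Main Theorem (for the finiteness of $β◦α$ and $a$ and the isomorphy of $b$ over big open sets), which is precisely the toolkit you spell out condition by condition, adding Remark~\ref{rem:piob} to handle the containment $X° ⊆ f^{-1}(Z°)$. Your write-up is just a more explicit version of the paper's one-line recollection, so there is nothing to change beyond minor wording (e.g.\ the relevant locus for $a$ is where its fibres are positive-dimensional, whose image is small by ZMT via Stein factorisation, rather than an ``$a$-exceptional divisor'').
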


\subsubsection*{Step 3: Adjunction for the morphism $b$}
\approvals{Behrouz & yes \\ Benoît & yes \\ Stefan & yes}

Decompose the $b$-exceptional divisor $E^b$ into irreducible components,
$(E^b_i)_{i ∈ I_b}$.  Since $(X,Δ)$ is lc, the standard adjunction for the
morphism $b$ reads
$$
K_{\bar{X}} + b_*^{-1} Δ = b^* \bigl( K_X + Δ \bigr) + \sum a_i E_i^b, \quad
\text{with all } a_i ≥ -1.
$$
The $ℚ$-divisor
$$
\bar{Δ} := b_*^{-1} Δ - \sum_{a_i < 0} a_i E_i^b
$$
is effective with coefficients from the interval $[0,1] ∩ ℚ$, and has simple
normal crossings support.  The pair $(\bar{X}, \bar{Δ})$ is thus log canonical,
and
$$
K_{\bar{X}} + \bar{Δ} = b^* \bigl( \underbrace{K_X + Δ}_{\mathclap{\text{psef by
      assumpt.}}} \bigr) + \sum_{a_i > 0} a_i E_i^b
$$
is again pseudo-effective.  To end with Step~3, set
$\bar{Δ}^h := \bar{Δ} - b_*^{-1} Δ^{vert}$ and observe that if
$\bar{F} ⊆ \bar{X}$ is a general $\bar{f}$-fibre, then $\bar{F}$ is disjoint
from the support of $b_*^{-1} Δ^{vert}$.  The following is thus an immediate
consequence.

\begin{obs}\label{obs:psef2}
  We have $\bar{Δ}^h|_{\bar{X}°} = (b°)^* Δ^{horiz}$.  The pair
  $(\bar{X}, \bar{Δ}^h)$ is log-canonical and the restricted divisor
  $\bigl( K_{\bar{X}} + \bar{Δ}^h \bigr)|_{\bar{F}}$ is pseudo-effective.  \qed
\end{obs}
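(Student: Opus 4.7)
The first two assertions are essentially formal, and I would dispatch them quickly. Unwinding the definitions in Step~3 gives $\bar{\Delta}^h = b_*^{-1}\Delta^{horiz} - \sum_{a_i < 0} a_i E_i^b$. For the equality $\bar{\Delta}^h|_{\bar{X}°} = (b°)^* \Delta^{horiz}$, I would exploit item~\ref{il:q-5}, which guarantees that $b°$ is an isomorphism: no $b$-exceptional divisor meets $\bar{X}°$, so restricting eliminates the exceptional summand, and strict transform coincides with pullback under $b°$. For the log-canonical property of $(\bar{X}, \bar{\Delta}^h)$, the point is that $\bar{\Delta}^h = \bar{\Delta} - b_*^{-1}\Delta^{vert}$ differs from the snc log-canonical divisor $\bar{\Delta}$ of Step~3 by subtracting an effective $\mathbb{Q}$-divisor; decreasing coefficients while keeping them in $[0,1] \cap \mathbb{Q}$ preserves log-canonicity of an snc pair.

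The third assertion is the substantive one. The plan is first to use the discrepancy formula recorded in Step~3 to rewrite
\[
K_{\bar{X}} + \bar{\Delta} = b^*(K_X + \Delta) + \sum_{a_i > 0} a_i E_i^b,
\]
which exhibits $K_{\bar{X}} + \bar{\Delta}$ as the sum of the pullback of a pseudo-effective class and an effective divisor, hence itself pseudo-effective on $\bar{X}$. From the sentence preceding the statement, a general fibre $\bar{F}$ of $\bar{f}$ avoids the support of the vertical divisor $\bar{\Delta} - \bar{\Delta}^h = b_*^{-1}\Delta^{vert}$, yielding the numerical identity $(K_{\bar{X}} + \bar{\Delta}^h)|_{\bar{F}} = (K_{\bar{X}} + \bar{\Delta})|_{\bar{F}}$. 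It therefore suffices to establish pseudo-effectivity of the right-hand side on $\bar{F}$.

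The only non-trivial input---and thus the hard part of the argument---is the principle that restriction of a pseudo-effective $\mathbb{Q}$-divisor to a very general fibre of a surjective morphism between smooth projective varieties remains pseudo-effective. I would invoke this as a standard consequence of the \textsc{bdpp} characterisation of pseudo-effectivity via non-negative intersection with strongly movable curves: any covering family of curves on a general smooth fibre $\bar{F}$ deforms through the fibration $\bar{f}$ to a strongly movable family on $\bar{X}$, so non-negative intersection with $K_{\bar{X}} + \bar{\Delta}$ is inherited by $\bar{F}$. Applying this to the pseudo-effective class constructed above concludes the argument.
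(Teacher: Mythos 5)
Your proposal is correct and follows essentially the same route as the paper, which records the observation as an immediate consequence of Step~3: the adjunction formula $K_{\bar{X}}+\bar{\Delta}=b^*(K_X+\Delta)+\sum_{a_i>0}a_iE_i^b$ (pullback of a pseudo-effective class plus an effective divisor), the fact that $b^\circ$ is an isomorphism so that exceptional divisors miss $\bar{X}^\circ$ and strict transform agrees with pullback there, and the disjointness of a general fibre $\bar{F}$ from $\supp b_*^{-1}\Delta^{vert}$. The only point you make explicit is the standard fact, left implicit in the paper, that a pseudo-effective class restricts pseudo-effectively to a general fibre; your \textsc{bdpp}-style justification is acceptable, though the more elementary argument (write the class as a limit of effective $\mathbb{Q}$-classes and restrict to a very general fibre not contained in any of their supports) avoids the slightly delicate claim that covering families of curves on fibres assemble into strongly movable families on $\bar{X}$.
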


\subsubsection*{Step 4: Adjunction for the morphism $a$}
\approvals{Behrouz & yes \\ Benoît & yes \\ Stefan & yes}

\CounterStep Using Items~\ref{il:q-4} and \ref{il:q-5}, the local normal form of
$(β ◦ α)|_{\wtilde{Z}°}$ and $\bar{f}|_{\bar{X}°}$, as well as the construction
of $β$ as a strongly adapted cover, an elementary computation in local
coordinates gives the following $ℚ$-linear equivalence,
$$
K_{\wtilde{X}°/\wtilde{Z}°} \sim_{ℚ} (b°◦a°)^* \bigl( K_{X°/Z°} -
\Ramification(f) \bigr).
$$
A similar equation holds for pairs.  For a precise formulation, let
$\wtilde{ι}: \wtilde{X}° → \wtilde{X}$ be the obvious inclusion map and set
$$
\wtilde{Δ}^h := \wtilde{ι}_* \Bigl( (a°)^*(\bar{Δ}^h) \Bigr),
$$
where the push-forward $\wtilde{ι}_*$ is taken in the sense of
Construction~\ref{cons:push} and Remark~\ref{rem:push}.  It follows from the
construction of the morphism $a$ in \ref{cons:X4} that every component of
$\wtilde{Δ}^h$ dominates $\wtilde{Z}$, and that no component of
$\wtilde{Δ}^h|_{\wtilde{X}°}$ is contained in the ramification locus of the
finite morphism $a°$.  Likewise, if $\wtilde{F} ⊆ \wtilde{X}$ is any general
fibre of $\wtilde{f}$, it follows from construction that $a$ is étale near
$\wtilde{F}$.  The following are thus immediate consequences of
Observation~\ref{obs:psef2}.

\begin{consequence}\label{cons:psef3}
  The pair $(\wtilde{X}, \wtilde{Δ}^h)$ is log-canonical and satisfies
  $$
  \bigl( K_{\wtilde{X}/\wtilde{Z}} + \wtilde{Δ}^h\bigr)|_{\wtilde{X}°} \sim_{ℚ}
  (b°◦a°)^* \bigl( K_{X°/Z°} - \Ramification(f) \bigr).  \qed
  $$
\end{consequence}

\begin{consequence}\label{cons:psef4}
  The restricted divisor
  $\bigl( K_{\wtilde{X}} + \wtilde{Δ}^h \bigr)|_{\wtilde{F}}$ is
  pseudo-effective.  \qed
\end{consequence}

\subsubsection*{Step 5: End of proof}
\approvals{Behrouz & yes \\ Benoît & yes \\ Stefan & yes}

The family $(C_t)_{t ∈ T}$ avoids small sets by assumption.  Together with
Observation~\ref{obs:D0}, this means that if $t ∈ T$ is general, then the curve
$C_t$ is contained in $X°$.  By \ref{il:q-6}, its preimage
$\wtilde{C}_t := (b ◦ a)^{-1}(C_t)$ is a curve in $\wtilde{X}°$ whose components
are movable curves in $\wtilde{X}$.  Since the cycle-theoretic push-forward
$(b ◦ a)_* (\wtilde{C}_t)$ is a positive multiple of $C_t$, we obtain
\begin{align*}
  0 & ≤ \bigl( K_{\wtilde{X}/\wtilde{Z}} + \wtilde{Δ}^h \bigr)·\wtilde{C}_t && \text{Thm.~\ref{thm:pdis} and Cons.~\ref{cons:psef4}} \\
    & = (b ◦ a)^* \bigl( K_{X/Z}+Δ^{hor}-\Ramification(f) \bigr)·\wtilde{C}_t && \text{Cons.~\ref{cons:psef3}} \\
    & = \const^+·\bigl( K_{X/Z}+Δ^{hor}-\Ramification(f) \bigr)·C_t && \text{Proj.\ formula.}
\end{align*}
This finishes the proof of Theorem~\ref{thm:neat}.  \qed

%
%
\svnid{$Id: 08-integrability.tex 162 2016-08-23 23:43:13Z taji $}

\section{Failure of semipositivity, construction of morphisms}
\label{sect:failure}
\subversionInfo
\approvals{Behrouz & yes \\ Benoît & yes \\ Stefan & yes}

As was mentioned earlier, a key component of Campana-Păun's proof of the generic
semipositivity result is the observation that given a lc pair $(X,D)$ and an
adapted cover $γ:Y → X$, any subsheaf $\sF_{(X,Δ,γ)} ⊆ \sT_{(X,Δ,γ)}$ that is
maximally destabilising respect to $γ^*(\text{ample})$ induces an algebraic
foliation on $X$, whose leaves are often algebraic.  This is the content of the
next theorem.

\begin{thm}[\protect{From positive subsheaves to foliations, cf.~\cite[Sect.~2]{CP13}}]\label{thm:exMor}
  Let $(X,Δ)$ be a projective pair and let $γ : Y → X$ be an adapted morphism
  that is Galois with group $G$.  Assume that $Ω^{[1]}_{(X,Δ,γ)}$ is \emph{not}
  $γ$-generically semipositive.  Then, there exists a normal variety $Z$, a
  dominant, essentially equidimensional rational map $ψ: X \dasharrow Z$, and a
  family $(C_t)_{t ∈ T}$ of curves in $X$ such that the following holds.
  \begin{enumerate}
  \item\label{il:a1} The family $(C_t)_{t ∈ T}$ dominates $X$ and avoids small
    sets.
  \item\label{il:a2} Given any $t ∈ T$, we have
    $[\sT_{X/Z}]·[C_t] > [Δ^{horiz}]·[C_t]$.
  \end{enumerate}
\end{thm}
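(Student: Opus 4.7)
The plan is to exploit the failure of semipositivity to extract a destabilising subsheaf $\sF_Y$ of $\sT_{(X,\Delta,\gamma)}$, descend it to an algebraically integrable foliation $\sF$ on $X$, and then read off the rational map $\psi$ and the family $(C_t)_{t\in T}$ from the resulting geometry.

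First, fix an ample divisor $H$ on $X$ witnessing the failure of $\gamma$-generic semipositivity, and let $\sF_Y\subseteq\sT_{(X,\Delta,\gamma)}$ be the maximal destabilising subsheaf of the Harder--Narasimhan filtration with respect to $\gamma^*H$, so that $\mu(\sF_Y)>0$. Since $\sT_{(X,\Delta,\gamma)}$ carries a natural $G$-structure (Item~\ref{il:X2-3} of Remark~\ref{rem:X2}) and the HN filtration is canonical, $\sF_Y$ is automatically $G$-invariant. Saturating it inside the larger $G$-sheaf $\gamma^{[*]}\sT_X(-\log\lfloor\Delta\rfloor)$ and applying Galois descent yields a saturated subsheaf $\sF\subseteq\sT_X(-\log\lfloor\Delta\rfloor)$, which we may view as a saturated subsheaf of $\sT_X$.

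The next step is to show that $\sF$ is a foliation. Apply Proposition~\ref{prop:sddffg} to $\sF$; by construction $\sG_\gamma=\sF_Y$, so the restricted map $N_\gamma$ factors through $\bigl(\sT_{(X,\Delta,\gamma)}/\sF_Y\bigr)^{**}$. Since $\sF_Y$ is semistable of slope $\mu(\sF_Y)>0$ and maximally destabilising, the target has $\mu^{\max}_{\gamma^*H}<\mu(\sF_Y)$, while in characteristic zero $\sF_Y^{[2]}$ is semistable of slope $2\mu(\sF_Y)>\mu(\sF_Y)$. A slope comparison forces $N_\gamma$ to vanish, and torsion-freeness of the relevant sheaves transfers this to the vanishing of the O'Neil tensor of $\sF$ itself.

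Third, I invoke the Bogomolov--McQuillan/Campana--Păun algebraic integrability theorem. Proposition~\ref{prop:xad} gives
\[
[\gamma^{[*]}\det\sF]\;=\;[\det\sF_Y]+[\gamma^*\Delta^{horiz}]+[\text{effective}],
\]
so $\det\sF$ pairs positively with the movable class $\gamma_*\bigl([\gamma^*H]^{n-1}\bigr)$ modulo the effective correction $\Delta^{horiz}$. This positivity forces $\sF$ to be algebraically integrable, producing a dominant rational map $\psi:X\dasharrow Z$ with connected fibres, essentially equidimensional on a suitable model, and with $[\sT_{X/Z}]=[\det\sF]$ in $N^1(X)_{\mathbb{Q}}$. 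For the family of curves, take $(C_t)_{t\in T}$ to be general complete intersections on $X$ cut out by $(n-1)$ very general members of $|kH|$ for $k\gg0$: this family dominates $X$ and avoids any prescribed small set by genericity, while $[\gamma^*C_t]$ is numerically proportional to $[\gamma^*H]^{n-1}$, so $[\det\sF_Y]\cdot[\gamma^*C_t]>0$. The Chern class identity above, together with the projection formula, then yields
\[
[\sT_{X/Z}]\cdot[C_t]-[\Delta^{horiz}]\cdot[C_t]\;=\;\tfrac{1}{\deg\gamma}\bigl([\det\sF_Y]+[\text{effective}]\bigr)\cdot[\gamma^*C_t]\;>\;0.
\]

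The main obstacle I anticipate is the algebraic integrability step: promoting the infinitesimal foliation $\sF$ to a genuine rational map $\psi$ whose relative tangent sheaf matches $\sF$ numerically requires careful invocation of (and verification of hypotheses for) a deep positivity-based integrability theorem, followed by passing to a model on which $\psi$ is essentially equidimensional. Most of the remaining technical bookkeeping concerns verifying that a single family $(C_t)_{t\in T}$ simultaneously dominates $X$, avoids small sets, and produces the strict intersection inequality when combined with the Chern class identity.
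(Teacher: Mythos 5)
Your overall architecture is the paper's: maximal destabiliser of the Harder--Narasimhan filtration with respect to $\gamma^*(\text{ample})$, $G$-invariance and descent to a saturated subsheaf of $\sT_X(-\log \lfloor Δ \rfloor)$, vanishing of the O'Neil tensor via Proposition~\ref{prop:sddffg} and the slope comparison, and the final intersection inequality via Item~\eqref{il:Y2} of Proposition~\ref{prop:xad}. Steps 1, 2 and 4 of your proposal are essentially correct and match the paper.

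The genuine gap is your third step. Positivity of $\det\sF$ against a movable class (which is all your displayed Chern-class identity gives you) is \emph{not} a sufficient criterion for algebraic integrability of a foliation: a foliation can have positive determinant slope while containing direction fields of negative degree, and no integrability theorem applies under that hypothesis alone. The criterion actually needed — and what the paper verifies — is positivity of the \emph{whole} restricted sheaf: one takes a Mehta--Ramanathan general complete-intersection curve $C_γ ⊂ Y$ for the polarisation $γ^*A$, so that the semistable, positive-slope sheaf $\sF_{(X,Δ,γ)}|_{C_γ}$ is ample, deduces ampleness of $γ^{[*]}\sF|_{C_γ}$ and hence of $\sF|_C$ with $C = γ(C_γ)$ (ampleness descends along the finite map $γ|_{C_γ}$, \cite[Prop.~6.1.8]{Laz04-II}), and only then invokes the Bogomolov--McQuillan criterion in the form of \cite[Thm.~1]{KST07} to conclude that the leaves through $C$ are algebraic. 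Your choice of curves does not repair this: complete intersections cut out by members of $|kH|$ on $X$ pull back to curves that are \emph{not} Mehta--Ramanathan general for $γ^*H$ on $Y$, so you cannot conclude semistability (hence ampleness) of $\sF_Y$ restricted to them; they suffice for the purely numerical inequality in your last display, but not for integrability. (The stronger slope-positivity criterion of \cite{CP15} would also need $μ^{\min}>0$ of the descended foliation, which you have not established and which the paper deliberately avoids.) Finally, the passage from algebraic leaves to the map $ψ$ is not just bookkeeping: the paper constructs $Z$ as the normalised image in the Chow variety, uses Zariski's Main Theorem to get an open immersion of a big open set $U ⊆ X$ into the normalised universal family, and thereby obtains essential equidimensionality of $ψ$ \emph{on $X$ itself} together with the sheaf-level identity $\sT_{X/Z} = \sF^{\sat}$ (saturation in $\sT_X$, not merely in $\sT_X(-\log\lfloor Δ\rfloor)$); your ``essentially equidimensional on a suitable model'' and the purely numerical assertion $[\sT_{X/Z}]=[\det\sF]$ fall short of what the statement and the Step-4 computation require, since the decomposition $Δ = Δ^{horiz}+Δ^{vert}$ must be the one induced by $ψ$ and must agree with the transversal decomposition for the foliation.
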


\begin{rem}
  The family $(C_t)_{t ∈ T}$ avoids small sets, and its general members are thus
  contained in $X_{\reg}$.  The intersection numbers of Item~\ref{il:a1} are
  therefore well-defined, even if $X$ is not necessarily $ℚ$-factorial.
\end{rem}

\subsection{Proof of Theorem~\ref*{thm:exMor}}
\label{subsect:integrability}
\approvals{Behrouz & yes \\ Benoît & yes \\ Stefan & yes}

As before, the proof is subdivided into a number of relatively independent
steps.

\subsubsection*{Step 1: Setup}
\approvals{Behrouz & yes \\ Benoît & yes \\ Stefan & yes}

By assumption, there exists a very ample divisor $A$ on $X$ such that
$Ω^{[1]}_{(X,Δ,γ)}$ is \emph{not} generically semipositive with respect to the
ample divisor $A_γ := γ^* A$.  Let and $C_γ ⊂ Y$ be a a general complete
intersection curve for the ample divisor $A_Y$ on $Y$, in the sense of
Mehta-Ramanathan.  Consider the Harder-Narasimhan filtration of the sheaf
$\sT_{(X,Δ,γ)}$ of adapted tangents with respect to $A_γ$ and let
$$
\sF_{(X,Δ,γ)} ⊆ \sT_{(X,Δ,γ)}
$$
denote the maximally destabilising subsheaf, which is saturated in
$\sT_{(X,Δ,γ)}$ and hence reflexive.  By assumption, its slope is positive,
$μ_{A_γ} \bigl( \sF_{(X,Δ,γ)} \bigr) > 0$.  Remark~\ref{rem:X2} yields an
inclusion
$$
\sF_{(X,Δ,γ)} ⊆ γ^{[*]} \sT_X(- \log \lfloor Δ \rfloor).
$$
We denote the associated saturation by
$\sF_{(X,Δ,γ)}^{\sat} ⊆ γ^{[*]} \sT_X(- \log \lfloor Δ \rfloor)$.  Since
$\sF_{(X,Δ,γ)} ⊆ \sT_{(X,Δ,γ)}$ is itself saturated, we have an equality
$$
\sF_{(X,Δ,γ)} = \sF_{(X,Δ,γ)}^{\sat} ∩ \sT_{(X,Δ,γ)}.
$$

\begin{obs}[Regularity along $C_γ$]\label{obs:o1}
  The curve $C_γ$ is a general member in a dominating family of curves that
  avoids small sets.  In particular, the following holds.
  \begin{enumerate}
  \item The curve $C_γ$ is entirely contained in the smooth locus of $Y$ and is
    itself smooth.
  \item The sheaves $\sF_{(X,Δ,γ)}|_{C_γ}$ and $\sF_{(X,Δ,γ)}^{\sat}|_{C_γ}$ are
    locally free.
  \end{enumerate}
\end{obs}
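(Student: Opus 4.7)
The plan is to verify the claims by standard Bertini-type arguments, exploiting the definition of $C_\gamma$ as a general Mehta-Ramanathan complete intersection: $C_\gamma = H_1 \cap \cdots \cap H_{n-1}$ with $n := \dim Y$ and each $H_i \in |m A_\gamma|$ a general member for sufficiently divisible $m \gg 0$. The ambient family of such curves is parametrised by an open subset $T$ of the product $|m A_\gamma|^{n-1}$ (or, after passing to the associated Hilbert scheme, by a smooth subvariety thereof).

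First I would isolate the \emph{bad locus} in $Y$. Since $Y$ is normal, its singular set $\sing Y$ is small. The reflexive sheaves $\sF_{(X,\Delta,\gamma)}$ and $\sF_{(X,\Delta,\gamma)}^{\sat}$ are locally free outside a further small subset, by the standard fact that a reflexive sheaf on a normal variety is locally free in codimension one. Let $S \subset Y$ denote the union of $\sing Y$ with these non-locally-free loci; then $S$ is small.

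Next, I would show that the family $T$ dominates $Y$ and avoids small sets. Dominance is immediate because through any point of $Y$ one can find a complete intersection of general elements of $|m A_\gamma|$. The avoidance property follows from an incidence dimension count: for any small subset $W \subset Y$, the locus of $(n-1)$-tuples in $|m A_\gamma|^{n-1}$ whose intersection meets $W$ has strictly smaller dimension than the parameter space, so removing it leaves a dense open $T^\circ \subset T$ whose associated curves are disjoint from $W$. Applying this with $W = S$ simultaneously yields $C_\gamma \subset Y_{\reg}$, and the local freeness along $C_\gamma$ of both $\sF_{(X,\Delta,\gamma)}$ and $\sF_{(X,\Delta,\gamma)}^{\sat}$.

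Finally, smoothness of $C_\gamma$ as a scheme is Bertini's theorem on the smooth quasi-projective variety $Y_{\reg}$: for $m \gg 0$, successive general members of $|m A_\gamma|$ cut out smooth subvarieties, and the full intersection $H_1 \cap \cdots \cap H_{n-1}$ is smooth of pure dimension one on $Y_{\reg}$. Combined with the inclusion $C_\gamma \subset Y_{\reg}$ just established, this completes the observation. No step is a genuine obstacle; the statement is an assembly of Bertini-type facts together with the codimension behaviour of reflexive sheaves on normal varieties.
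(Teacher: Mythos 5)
Your proposal is correct and matches the argument the paper implicitly relies on: the observation is stated without proof precisely because it follows from the standard facts you assemble — general complete intersections of members of $|m A_γ|$ dominate $Y$ and miss any prescribed small set (in particular $\sing Y$ and the non-locally-free loci of the reflexive sheaves $\sF_{(X,Δ,γ)}$ and $\sF_{(X,Δ,γ)}^{\sat}$, which are small since reflexive sheaves on a normal variety are locally free in codimension one), while Bertini gives smoothness of the curve inside $Y_{\reg}$. No discrepancy with the paper's (tacit) reasoning.
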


\begin{obs}[Positivity along $C_γ$]\label{obs:o1a}
  The locally free sheaf $\sF_{(X,Δ,γ)}|_{C_γ}$ is semistable, of positive
  degree and therefore ample.  The larger sheaf $\sF_{(X,Δ,γ)}^{\sat}|_{C_γ}$ is
  likewise ample.
\end{obs}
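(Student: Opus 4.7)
The plan is to treat the two assertions in turn. For $\sF_{(X,Δ,γ)}|_{C_γ}$, I would first observe that by construction $\sF_{(X,Δ,γ)}$ is the first nontrivial term in the Harder--Narasimhan filtration of $\sT_{(X,Δ,γ)}$ with respect to $A_γ$, hence $A_γ$-semistable. Since $C_γ$ is, by choice, a general complete intersection curve cut out by sufficiently large multiples of $A_γ$, the Mehta--Ramanathan restriction theorem applies to transfer semistability from $Y$ to $C_γ$.

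Next, I would check positive degree. By assumption $μ_{A_γ}\bigl(\sF_{(X,Δ,γ)}\bigr)>0$, and the class $[C_γ]$ is proportional to $A_γ^{\dim Y-1}$ with positive constant; consequently
$$
\deg\bigl(\sF_{(X,Δ,γ)}|_{C_γ}\bigr) = \const^+ · c_1\bigl(\sF_{(X,Δ,γ)}\bigr)·[A_γ]^{\dim Y-1} > 0.
$$
Combining semistability with positive degree, Hartshorne's classical characterisation of ampleness on a smooth projective curve (semistable $+$ positive degree $\Rightarrow$ ample) gives the first assertion.

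For the saturation $\sF_{(X,Δ,γ)}^{\sat}|_{C_γ}$, my plan is to exploit the fact that $\sF_{(X,Δ,γ)}$ and $\sF_{(X,Δ,γ)}^{\sat}$ have the same generic rank, so the quotient $\sF_{(X,Δ,γ)}^{\sat}/\sF_{(X,Δ,γ)}$ is a torsion sheaf on $Y$. Observation~\ref{obs:o1} already guarantees that both restrictions to $C_γ$ are locally free, so restricting yields an injection of vector bundles $\sF_{(X,Δ,γ)}|_{C_γ} \hookrightarrow \sF_{(X,Δ,γ)}^{\sat}|_{C_γ}$ whose cokernel is a skyscraper sheaf on $C_γ$. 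It then remains to invoke the elementary fact that if $E\hookrightarrow F$ is an inclusion of vector bundles on a smooth projective curve with torsion cokernel and $E$ ample, then $F$ is ample as well: indeed, any line bundle quotient $F\twoheadrightarrow L$ cannot kill $E$ entirely (otherwise $L$ would be a quotient of the torsion sheaf $F/E$), so $L$ receives a nonzero map from the ample bundle $E$ and therefore has positive degree.

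I do not anticipate any real obstacles here; the subtle point, if any, is the last elementary step about inclusions of vector bundles on a curve with torsion cokernel, which is perhaps the only piece worth spelling out carefully since the rest is standard Mehta--Ramanathan plus Hartshorne.
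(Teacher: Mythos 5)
Your route is exactly the one the paper has in mind (the statement is an Observation, left without explicit proof): the maximal destabilising subsheaf is semistable, Mehta--Ramanathan transfers semistability to the general complete intersection curve $C_\gamma$, the degree is positive because $[C_\gamma]$ is a positive multiple of $A_\gamma^{\dim Y-1}$ and $\mu_{A_\gamma}\bigl(\sF_{(X,\Delta,\gamma)}\bigr)>0$, Hartshorne's ``semistable of positive degree on a smooth projective curve is ample'' gives the first assertion, and the saturation is handled via the torsion cokernel, with local freeness along $C_\gamma$ supplied by Observation~\ref{obs:o1}. All of this is fine and matches the intended argument.

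The one step that does not work as written is your justification of the final elementary lemma. Positivity of the degree of every quotient \emph{line bundle} of $F:=\sF^{\sat}_{(X,\Delta,\gamma)}|_{C_\gamma}$ is \emph{not} a criterion for ampleness: a stable bundle of rank two and degree zero on a curve of genus at least two has every quotient line bundle of positive degree, yet is certainly not ample. The correct criterion (in characteristic zero) is that every quotient \emph{bundle} of arbitrary rank has positive degree, i.e.\ $\mu_{\min}(F)>0$; this implies ampleness by the same semistable-plus-positive-degree fact, applied to the last Harder--Narasimhan quotient. Fortunately your argument upgrades verbatim: for any quotient bundle $Q$ of $F$, the composition $E:=\sF_{(X,\Delta,\gamma)}|_{C_\gamma}\to Q$ has torsion cokernel because the cokernel of $E\hookrightarrow F$ is a skyscraper, so its image is a full-rank locally free subsheaf of $Q$; that image is a quotient of the ample bundle $E$, hence ample of positive degree, and $\deg Q$ is at least that degree. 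Thus $\mu_{\min}(F)>0$ and $F$ is ample. With this adjustment the proposal is complete and agrees with the paper.
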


\begin{obs}[$G$-invariance]\label{obs:o2}
  The divisor $A_γ$ is invariant under the action of the Galois group.  As a
  consequence, it follows from the uniqueness of the Harder-Narasimhan
  filtration that the maximally destabilising subsheaf $\sF_{(X,Δ,γ)}$ is a
  $G$-subsheaf of $\sT_{(X,Δ,γ)}$, and also of
  $γ^{[*]} \sT_X(- \log \lfloor Δ \rfloor)$.

  In a similar vein, it follows from uniqueness of saturation that
  $\sF_{(X,Δ,γ)}^{\sat}$ is a $G$-subsheaf of
  $γ^{[*]} \sT_X(- \log \lfloor Δ \rfloor)$.  Thus,
  by~\cite[Prop.~2.16]{GKPT15}, there exists a reflexive, saturated subsheaf
  $\sF ⊆ \sT_X(-\log \lfloor Δ \rfloor)$ such that
  $\sF_{(X,Δ,γ)}^{\sat} = γ^{[*]} \sF$.
\end{obs}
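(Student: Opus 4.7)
The claim is an assembly of four self-contained observations, and the plan is to carry them out in the order they are stated.

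First, I would dispense with the $G$-invariance of $A_γ$. Since $γ$ is Galois with group $G$ (Definition~\ref{def:Galois}), it is tautologically the quotient morphism for the $G$-action on $Y$, so any pull-back divisor $γ^* A$ is $G$-stable on the nose. In particular, the slope function $μ_{A_γ}(\bullet)$ defined via intersection with $[A_γ]^{\dim Y - 1}$ takes equal values on $G$-conjugate subsheaves.

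Second, for the $G$-invariance of $\sF_{(X,Δ,γ)}$, I would appeal to the uniqueness of the Harder--Narasimhan filtration. By Item~\ref{il:X2-3} of Remark~\ref{rem:X2}, the ambient sheaf $\sT_{(X,Δ,γ)}$ carries a natural $G$-sheaf structure, so for every $g ∈ G$ the image $g_* \sF_{(X,Δ,γ)}$ is again a coherent subsheaf of $\sT_{(X,Δ,γ)}$ of the same rank and of the same slope with respect to $A_γ$ (using the first step). By uniqueness of the maximally destabilising subsheaf for the polarisation $A_γ$, the two must coincide, and hence $\sF_{(X,Δ,γ)}$ is a $G$-subsheaf. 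The same argument, performed inside the larger $G$-sheaf $γ^{[*]} \sT_X(-\log \lfloor Δ \rfloor)$, shows that the inclusion into this ambient sheaf is an inclusion of $G$-sheaves.

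Third, the saturation $\sF_{(X,Δ,γ)}^{\sat}$ is obtained as the canonical maximal subsheaf containing $\sF_{(X,Δ,γ)}$ with torsion free quotient in $γ^{[*]} \sT_X(-\log \lfloor Δ \rfloor)$. Canonicity, combined with the $G$-invariance just established, forces $\sF_{(X,Δ,γ)}^{\sat}$ to be $G$-stable inside the $G$-sheaf $γ^{[*]} \sT_X(-\log \lfloor Δ \rfloor)$.

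Finally, the descent step is a direct invocation of \cite[Prop.~2.16]{GKPT15}: a reflexive, saturated, $G$-invariant subsheaf of a reflexive pull-back $γ^{[*]} \sE$ descends to a reflexive saturated subsheaf of $\sE$. Applied to $\sE := \sT_X(-\log \lfloor Δ \rfloor)$ and to $\sF_{(X,Δ,γ)}^{\sat}$, this yields the desired $\sF$ with $γ^{[*]} \sF = \sF_{(X,Δ,γ)}^{\sat}$. The one point requiring real care is checking that the $G$-sheaf structure on $γ^{[*]} \sT_X(-\log \lfloor Δ \rfloor)$ referred to in Remark~\ref{rem:X2} is precisely the pull-back structure for which the descent statement of \cite{GKPT15} is formulated; this is a compatibility of definitions rather than a genuine mathematical obstacle, and is where I would expect to spend most of the verification time.
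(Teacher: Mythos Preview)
Your proposal is correct and matches the paper's own reasoning, which is embedded directly in the observation: $G$-invariance of $A_γ$ as a pull-back, uniqueness of the Harder--Narasimhan filtration to get $G$-invariance of $\sF_{(X,Δ,γ)}$, uniqueness of saturation for $\sF_{(X,Δ,γ)}^{\sat}$, and then the descent citation \cite[Prop.~2.16]{GKPT15}. Your additional caution about the compatibility of $G$-sheaf structures is a sensible check but not something the paper treats as requiring separate argument.
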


\begin{notation}
  Let $\sF^{\sat} ⊆ \sT_X$ denote the saturation of $\sF$ inside $\sT_X$.
  The following diagrams summarise the situation,
  $$
  \sT_Y \bigl( -\log Δ_γ \bigr) \quad ⊆ \quad
  \underbrace{\sT_{(X,Δ,γ)}}_{\mathclap{\text{contains }\sF_{(X,Δ,γ)}}} \quad ⊆
  \quad \underbrace{γ^{[*]} \sT_X(-\log \lfloor Δ \rfloor)}_{\text{contains
    }\sF_{(X,Δ,γ)}^{\sat} = γ^{[*]} \sF}
  $$
  and
  $$
  \underbrace{\sT_X(- \log \lfloor Δ \rfloor) }_{\mathclap{\text{contains
      }\sF}} \quad ⊆ \quad \underbrace{\sT_X}_{\mathclap{\text{contains }\sF^{\sat}}}.
  $$
\end{notation}

\begin{obs}[Regularity and amplitude along $C$]\label{obs:o3}
  Since $γ$ is finite, the curve $C := γ(C_γ)$ is again a general member in a
  dominating family of curves that avoids small sets.  It follows that $C$ is
  contained in the smooth locus of $X$ and that $\sF$ is locally free near $C$.
  In particular,
  $$
  \sF_{(X,Δ,γ)}^{\sat}|_{C_γ} = (γ|_{C_γ})^* (\sF|_C),
  $$
  and \cite[Prop.~6.1.8]{Laz04-II} therefore implies that $\sF|_C$ and
  $\sF^{\sat}|_C$ are both ample.
\end{obs}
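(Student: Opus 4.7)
The statement packages three assertions: (i) $C := γ(C_γ)$ moves in a dominating family of curves avoiding small sets; (ii) the sheaf identification $\sF_{(X,Δ,γ)}^{\sat}|_{C_γ} = (γ|_{C_γ})^*(\sF|_C)$; and (iii) ampleness of both $\sF|_C$ and $\sF^{\sat}|_C$. I plan to establish them in this order.

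For (i), recall that $C_γ$ was chosen as a general member of the Mehta--Ramanathan family of complete intersections cut out by $(\dim Y - 1)$ general elements of $|m A_γ|$ for $m \gg 0$. Taking cycle-theoretic images under $γ$ produces a family $\bigl\{γ(C_{γ,s})\bigr\}_{s ∈ S}$ of curves on $X$ parameterised by the same base $S$. Dominance is preserved because $γ$ is surjective. For the avoidance property, given any small subset $\tilde S ⊂ X$, its preimage $γ^{-1}(\tilde S)$ has the same codimension as $\tilde S$ by finiteness of $γ$ (cf.\ Remark~\ref{rem:piob}), hence is small in $Y$; general $C_{γ,s}$ avoid $γ^{-1}(\tilde S)$ by hypothesis, and therefore $γ(C_{γ,s})$ avoids $\tilde S$.

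For (ii), the key observation is that the loci where $\sF$ and $\sF^{\sat}$ fail to be locally free are small in $X$. Both sheaves are reflexive on $X_{\reg}$ (being saturated subsheaves of the reflexive sheaves $\sT_X(-\log \lfloor Δ \rfloor)$ and $\sT_X$ respectively), and reflexive sheaves on smooth varieties are locally free outside a closed subset of codimension $\geq 3$; together with $X_{\sing}$ this forms a small set in $X$. Applying (i), a general $C$ then lies in $X_{\reg}$, and both $\sF$ and $\sF^{\sat}$ are locally free on an open neighbourhood $U$ of $C$. On $U$, the ordinary pullback $γ^*(\sF|_U)$ is already locally free on $γ^{-1}(U) ⊃ C_γ$, so the reflexive hull implicit in $γ^{[*]}$ is redundant there: $γ^{[*]}\sF|_{C_γ} = (γ|_{C_γ})^*(\sF|_C)$. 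Combining with $\sF_{(X,Δ,γ)}^{\sat} = γ^{[*]}\sF$ from Observation~\ref{obs:o2} yields the claimed equality.

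For (iii), ampleness of $\sF|_C$ is an immediate consequence of \cite[Prop.~6.1.8]{Laz04-II}: the pullback $(γ|_{C_γ})^*(\sF|_C)$ equals $\sF_{(X,Δ,γ)}^{\sat}|_{C_γ}$, which is ample by Observation~\ref{obs:o1a}, and $γ|_{C_γ}$ is finite surjective. The more delicate point, which I expect to be the main technical obstacle, is ampleness of $\sF^{\sat}|_C$, since Proposition~6.1.8 does not apply to it directly. On the smooth curve $C$, the sheaves $\sF|_C ⊆ \sF^{\sat}|_C$ are locally free of the same rank, with torsion cokernel supported on finitely many points (namely those where $C$ meets the support of $\sF^{\sat}/\sF ⊆ \lfloor Δ \rfloor$). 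For any line-bundle quotient $q \colon \sF^{\sat}|_C \twoheadrightarrow L$, the composition $\sF|_C \to L$ must be nonzero, for otherwise $q$ would factor through the torsion cokernel and force $L = 0$. Its image is a nonzero subsheaf of $L$ and simultaneously a quotient of the ample bundle $\sF|_C$, hence of positive degree; since $L$ contains this image, $\deg L > 0$. Every line-bundle quotient of $\sF^{\sat}|_C$ therefore has positive degree, which on a smooth projective curve is equivalent to ampleness.
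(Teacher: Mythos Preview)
Your approach matches the paper's implicit reasoning (the paper treats this as a bare observation), and parts~(i) and~(ii) are fine. Part~(iii) for $\sF|_C$ is also correct. The gap is in your treatment of $\sF^{\sat}|_C$: the final sentence asserts that on a smooth projective curve, ampleness is equivalent to every \emph{line-bundle} quotient having positive degree. This is false. A stable bundle $E$ of rank~$\geq 2$ and degree zero on a curve of genus~$\geq 2$ is a counterexample: stability forces every line-bundle quotient to have positive degree, yet $\deg E = 0$ so $E$ is not ample. Hartshorne's criterion requires positive degree for \emph{all} quotient bundles, of arbitrary rank.

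The repair is immediate and uses nothing beyond what you already wrote. Given any quotient bundle $q\colon \sF^{\sat}|_C \twoheadrightarrow Q$, the image $Q_0$ of the composite $\sF|_C \to Q$ is nonzero (else $Q$ would be a locally free quotient of the torsion sheaf $\sF^{\sat}|_C/\sF|_C$, forcing $Q=0$), torsion-free on the smooth curve $C$, hence locally free, and a quotient of the ample bundle $\sF|_C$; thus $\deg Q_0 > 0$. Since $Q/Q_0$ is a quotient of the torsion cokernel, it is torsion of non-negative length, so $\deg Q = \deg Q_0 + \operatorname{length}(Q/Q_0) > 0$. This is precisely your argument, run for arbitrary $Q$ rather than rank-one $Q$; only the concluding appeal to the ampleness criterion needs to be stated correctly.
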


\subsubsection*{Step 2: Construction of a foliation}
\approvals{Behrouz & yes \\ Benoît & yes \\ Stefan & yes}

Next, we will show that the sheaf $\sF^{\sat}$ is in fact a foliation.  For
this, Proposition~\ref{prop:sddffg}, which describes the lifting the O'Neil
tensor to an adapted cover, will be the key ingredient.

\begin{claim}\label{claim:P2}
  The sheaf $\sF^{\sat} ⊆ \sT_X$ is closed under the Lie-bracket.
\end{claim}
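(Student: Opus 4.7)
The plan is to show that the O'Neil tensor
$$
N : \sF^{[2]} \to \bigl( \sT_X(-\log \lfloor \Delta \rfloor) / \sF \bigr)^{**}
$$
of the saturated subsheaf $\sF \subseteq \sT_X(-\log \lfloor \Delta \rfloor)$ vanishes identically.  Once $\sF$ is closed under the Lie-bracket inside the logarithmic tangent sheaf, it is also closed inside $\sT_X$; and since $\sF$ and $\sF^{\sat}$ agree on a big open set, while $(\sF^{\sat})^{[2]}$ and $(\sT_X/\sF^{\sat})^{**}$ are reflexive, the corresponding O'Neil tensor of $\sF^{\sat}$ will likewise vanish identically, giving the claim.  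The key technical input is Proposition~\ref{prop:sddffg}, which constrains the destination of the pulled-back O'Neil tensor.

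Applying Proposition~\ref{prop:sddffg} with $\sG := \gamma^{[*]} \sF = \sF_{(X,\Delta,\gamma)}^{\sat}$ and $\sG_\gamma := \sG \cap \sT_{(X,\Delta,\gamma)} = \sF_{(X,\Delta,\gamma)}$, the restriction $N_\gamma$ of $\gamma^{[*]} N$ to $\sF_{(X,\Delta,\gamma)}^{[2]}$ factorises through the reflexive quotient $\bigl( \sT_{(X,\Delta,\gamma)} / \sF_{(X,\Delta,\gamma)} \bigr)^{**}$.  I then restrict this factorisation to $C_\gamma$: by Mehta-Ramanathan, the Harder-Narasimhan filtration of $\sT_{(X,\Delta,\gamma)}$ relative to $A_\gamma$ restricts to the Harder-Narasimhan filtration of $\sT_{(X,\Delta,\gamma)}|_{C_\gamma}$, so $\sF_{(X,\Delta,\gamma)}|_{C_\gamma}$ is locally free, semistable, and of positive slope (Observation~\ref{obs:o1a}) strictly greater than $\mu^{\max}$ of $\bigl( \sT_{(X,\Delta,\gamma)} / \sF_{(X,\Delta,\gamma)} \bigr)^{**}|_{C_\gamma}$.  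Tensor powers of semistable sheaves being semistable in characteristic zero, $\sF_{(X,\Delta,\gamma)}^{[2]}|_{C_\gamma}$ is semistable of slope $2\mu(\sF_{(X,\Delta,\gamma)}|_{C_\gamma})$, which strictly exceeds the destination's maximal slope.  The factorisation map therefore vanishes on $C_\gamma$; since $C_\gamma$ is a general member of a dominating family, $N_\gamma = 0$ as a morphism of sheaves.

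To transfer this vanishing back down, observe that on the étale locus of $\gamma$ --- a big open subset of $Y$ --- the sheaf $\sT_{(X,\Delta,\gamma)}$ coincides with $\gamma^{[*]}\sT_X(-\log \lfloor \Delta \rfloor)$, and $\sF_{(X,\Delta,\gamma)}$ coincides with $\gamma^{[*]}\sF$, whence $N_\gamma = \gamma^{[*]}N$ there.  Since the target of $\gamma^{[*]}N$ is torsion free, vanishing on a big open set forces $\gamma^{[*]}N = 0$ identically, and finiteness of $\gamma$ then yields $N = 0$.

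The main obstacle is the slope comparison on $C_\gamma$: the naive destination of $\gamma^{[*]}N$ is the larger quotient $\gamma^{[*]}\bigl(\sT_X(-\log \lfloor \Delta \rfloor)/\sF\bigr)^{**}$, which might harbour subsheaves of very positive slope against which the maximal-destabilising property of $\sF_{(X,\Delta,\gamma)}$ alone cannot argue.  It is precisely the tighter factorisation provided by Proposition~\ref{prop:sddffg} that brings the destination within reach of the slope inequality, and this is the place where the fractional differential formalism developed in Section~\ref{sect:fractional} is doing real work.
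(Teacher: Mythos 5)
Your argument is correct and follows essentially the same route as the paper: Proposition~\ref{prop:sddffg} to factor the pulled-back O'Neil tensor through $\bigl(\sT_{(X,Δ,γ)}/\sF_{(X,Δ,γ)}\bigr)^{**}$, followed by a slope argument exploiting that $\sF_{(X,Δ,γ)}$ is maximally destabilising of positive slope; your restriction to the Mehta--Ramanathan curve $C_γ$ merely unpacks the paper's inequality $μ^{\min}_{A_γ}\bigl(\sF_{(X,Δ,γ)}^{[2]}\bigr) = 2·μ^{\min}_{A_γ}\bigl(\sF_{(X,Δ,γ)}\bigr) > μ^{\max}_{A_γ}\bigl(\sT_{(X,Δ,γ)}/\sF_{(X,Δ,γ)}\bigr)$, and then both proofs descend to $X$ because $γ^{[*]}\sF$ and $\sF_{(X,Δ,γ)}$ agree generically. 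One cosmetic slip: the étale locus of $γ$ is dense but not big (its complement is the ramification divisor), which is harmless since density together with torsion-freeness of the target already yields the vanishing of $N$.
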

\begin{proof}[Proof of Claim~\ref{claim:P2}]
  Closedness under Lie bracket can be checked on an open subset.  It will
  therefore suffice to show that the subsheaf
  $\sF ⊆ \sT_X(-\log \lfloor Δ \rfloor)$, which agrees with $\sF^{\sat}$
  generically, is closed under the Lie-bracket.  Equivalently, we need to prove
  that the O'Neil tensor
  $$
  N : \sF^{[2]} → \bigl(\factor{\sT_X(-\log \lfloor Δ \rfloor)}{\sF} \bigr)^{**}
  $$
  vanishes.  For this, recall from Proposition~\ref{prop:sddffg} that the
  restriction of its reflexive pull-back to $\sF_{(X,Δ,γ)}$ gives a morphism
  $$
  N_{(X,Δ,γ)} : \sF_{(X,Δ,γ)}^{[2]} →
  \bigl(\factor{\sT_{(X,Δ,γ)}}{\sF_{(X,Δ,γ)}} \bigr)^{**}.
  $$
  Since $γ^{[*]} \sF$ and $\sF_{(X,Δ,γ)}$ agree on a dense open set, it will be
  enough to show that $N_{(X,Δ,γ)}$ vanishes.  This is actually the case for
  slope reasons.  We have the following inequalities:
  \begin{align*}
    μ_{A_{γ}}^{\min} \bigl(\sF_{(X,Δ,γ)}^{[2]} \bigr) & = 2·μ_{A_{γ}}^{\min} \bigl(\sF_{(X,Δ,γ)} \bigr) \\
                                                      & > μ_{A_{γ}}^{\min}(\sF_{(X,Δ,γ)}) && \text{since $μ^{\min} > 0$ by assumption} \\
                                                      & ≥ μ_{A_{γ}}^{\max} \Bigl(\factor{\sT_{(X,Δ,γ)}}{\sF_{(X,Δ,γ)}} \Bigr) && \text{since $\sF_{(X,Δ,γ)}$ is max.\ destab.}
  \end{align*}
  Claim~\ref{claim:P2} thus follows.
\end{proof}

\begin{notation}
  Following Notation~\ref{not:genTrans} the foliation $\sF^{\sat}$ induces a
  decomposition $Δ = Δ^{trans}+Δ^{ntrans}$.
\end{notation}

\subsubsection*{Step 3: Construction of a morphism}
\approvals{Behrouz & yes \\ Benoît & yes \\ Stefan & yes}

We show that the foliation $\sF^{\sat}$ is algebraic and therefore defines an
essentially equidimensional rational map.  The algebraicity criterion that we
employ goes back to Hartshorne, \cite[Thm.~6.7]{Ha68}.  We refer the
reader to \cite{KST07} or to one of the papers \cite{Bost01, BMcQ01, BM16} for a
thorough discussion.

\begin{claim}\label{claim:gianluca}
  There exists a normal, projective variety $Z$, and a dominant, essentially
  equidimensional, rational map $ψ : X \dasharrow Z$ such that the sheaves
  $\sT_{X/Z}$ and $\sF^{\sat}$ agree.
\end{claim}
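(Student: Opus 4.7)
The plan is to apply an algebraic integrability criterion of Bost/Hartshorne/Kebekus--Sol{\'a}--Ruiz--Toma type to the foliation $\sF^{\sat}$, then extract the rational quotient map from the resulting family of algebraic leaves, and finally identify its relative tangent sheaf with $\sF^{\sat}$.

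First, I would record the input. By Claim~\ref{claim:P2}, $\sF^{\sat}\subseteq \sT_X$ is a saturated, involutive subsheaf, i.e.\ a foliation in the sense of Notation~\ref{not:foliation}. By Observation~\ref{obs:o3}, the restriction $\sF^{\sat}|_C$ to the general complete intersection curve $C\subset X$ (coming from the general MR-curve $C_\gamma$ on $Y$) is an ample vector bundle, and $C$ is a general member of a dominating family that avoids small sets. This is exactly the hypothesis of the algebraicity criterion in \cite[Thm.~6.7]{Ha68} as reformulated in \cite{KST07} (see also \cite{Bost01, BMcQ01, BM16}); accordingly, $\sF^{\sat}$ has algebraic and rationally connected leaves through a general point of $X$.

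Second, I would turn this family of leaves into a map. Let $r:=\rank \sF^{\sat}$. The algebraic leaves form a dominating family of $r$-dimensional subvarieties of $X$; taking the closure in $\Chow(X)$ of the component parametrising general leaves and normalising the associated universal family yields the usual quotient diagram
\[
\xymatrix{
\sU \ar[r]^{u} \ar[d]_{p} & X \\
Z &
}
\]
with $Z$ normal and projective, $u$ birational, and $p$ proper with connected fibres of pure dimension $r$. Composing, one obtains a dominant rational map $\psi := p\circ u^{-1}:X\dasharrow Z$ with connected fibres. The map $\psi$ is essentially equidimensional in the sense of Definition~\ref{defn:esseqdim}: on the big open set $U\subseteq \Defn(\psi)$ over which $u$ is an isomorphism, the restriction $\psi|_U$ is the morphism $p\circ u^{-1}|_U$, whose fibres are open subsets of the $r$-dimensional leaves and hence of pure dimension $r$; the complement is small because the general leaf avoids any codimension $\geq 2$ locus.

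Third, I would identify $\sT_{X/Z}$ with $\sF^{\sat}$. On the open subset $V\subseteq U$ where moreover $\psi|_V$ is smooth and $X_{\reg}\cap V$ is big in $X$, the relative tangent sheaf $\sT_{V/Z_{\reg}}=\ker(d\psi|_V)$ is, by the very construction of $\psi$ from the leaves of $\sF^{\sat}$, equal to $\sF^{\sat}|_V$: both are the involutive distribution tangent to the fibration. Pushing forward from $V$ to $X$ via the inclusion and using that both sheaves are reflexive (Construction~\ref{constr:reltgt} and the definition of a foliation), the equality extends to $\sT_{X/Z}=\sF^{\sat}$ on all of $X$.

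The main obstacle is the first step: verifying that the ampleness of $\sF^{\sat}$ along one complete intersection curve, plus involutivity, is enough to apply an algebraic integrability theorem and to guarantee that the leaf through a general point is actually algebraic (not merely formally so). Once this is granted by citing \cite[Thm.~6.7]{Ha68}, \cite{KST07} or \cite{BM16}, the rest — building $Z$ from the Chow scheme, checking essential equidimensionality, and matching relative tangent sheaf with foliation — is formal and proceeds by working on a big open set and invoking reflexivity.
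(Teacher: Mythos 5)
Your proposal follows essentially the same route as the paper: invoke the algebraicity criterion (via \cite{Ha68}, \cite{KST07}, and the Bost/Bogomolov--McQuillan circle of ideas) using ampleness of $\sF^{\sat}|_C$, build $Z$ from the Chow scheme, verify essential equidimensionality, and identify $\sT_{X/Z}$ with $\sF^{\sat}$ on a dense open set, then extend by reflexivity and saturation. The only difference is presentational---the paper works with the graph map $\phi \colon X \dasharrow X \times \Chow(X)$ and establishes the open-immersion and equidimensionality claims via two careful applications of Zariski's Main Theorem, whereas your stated justification for bigness of the relevant open set (``general leaves avoid small loci'') is slightly off target and should instead appeal to Zariski's Main Theorem together with normality of $X$, exactly as the paper does.
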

\begin{proof}[Proof of Claim~\ref{claim:gianluca}]
  In the setting at hand, amplitude of the restricted foliation $\sF|_C$ implies
  that any leaf of $\sF$ that intersects $C$ is automatically algebraic; we
  refer the reader to \cite[Thm.~1]{KST07} for a convenient reference.  Since
  $C$ is general in a dominating family, this gives rise to a rational map
  $$
  φ : X \dasharrow X ⨯ \Chow(X), \quad x \mapsto \left( x, \bigl[
    \overline{\text{leaf through $x$}} \bigr] \right).
  $$
  Since $X$ is normal, it follows from Zariski's main theorem,
  \cite[V~Thm.~5.2]{Ha77}, that there exists a big open set $U ⊆ X$ where $φ$ is
  well-defined.  Observe that the following holds:
  \begin{itemize}
  \item The morphism $φ|_U$ has a right inverse and is therefore necessarily
    injective, in particular quasi-finite.
  \item The image of the morphism $φ$ is contained in the universal family that
    exists for the Chow variety.  Let $\wtilde{V}$ be the normalisation of the
    closure of the image.
  \end{itemize}

  The universal family over $\Chow(X)$ need not be normal.  Normalising, we
  obtain a diagram as follows,
  $$
  \xymatrix{ %
    U \ar[r] \ar@/^4mm/[rr]^{\text{well-defined, injective}} & X
    \ar@{-->}[r]_{φ} & \widetilde{V} \ar[rr]_{\text{normalisation}}
    \ar@/^4mm/[rrrr]^{\text{equidim.}} && \Univ \ar[rr]_{\text{equidim.}} &&
    \Chow(X).  %
  }
  $$
  More is true.  Zariski's Main Theorem in the form of
  Grothendieck\footnote{Zariski's Main Theorem in the form of Grothendieck is
    found in \cite{EGA4-3}.  We refer the reader to \cite[Sect.~3.4]{GKP13} for
    a more detailed discussion.}, asserts that the map $U → \widetilde{V}$ is in
  fact an open immersion.  In summary, we see that the composed morphism
  $U → \Chow(X)$ is equidimensional.  Let $Z$ be the normalisation of the image,
  and $ψ : X \dasharrow Z$ the induced map.

  The sheaves $\sT_{X/Z}$ and $\sF$ agree on an open subset of $U$, where all
  spaces and foliations are regular, and all maps are well-defined and smooth.
  Since both are saturated subsheaves of $\sT_X$, this suffices to show that
  they agree everywhere.  Claim~\ref{claim:gianluca} follows.
\end{proof}

\begin{notation}
  Following Notation~\ref{not:decomphv}, the map $ψ : X \dasharrow Z$ induces a
  decomposition $Δ = Δ^{horiz}+Δ^{vert}$.
\end{notation}

\begin{obs}
  The decomposition of $Δ$ agrees with that coming from the foliation.  In other
  words, $Δ^{trans} = Δ^{horiz}$ and $Δ^{ntrans} = Δ^{vert}$.
\end{obs}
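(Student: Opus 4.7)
The plan is essentially to invoke the remark that follows Notation~\ref{not:genTrans}, which already asserts exactly this agreement in the general setting of Construction~\ref{constr:reltgt}: for any essentially equidimensional rational map, the horizontal/vertical decomposition of a $ℚ$-Weil divisor coincides with the transversal/non-transversal decomposition relative to the associated relative tangent foliation. Since Claim~\ref{claim:gianluca} provides a normal projective $Z$ and an essentially equidimensional rational map $ψ : X \dasharrow Z$ whose relative tangent sheaf satisfies $\sT_{X/Z} = \sF^{\sat}$, the two decompositions of $Δ$ in question are \emph{literally} the two decompositions compared in the earlier remark.

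If one unwinds what this amounts to componentwise, the verification is local and elementary. Let $D ⊂ \supp Δ$ be a prime component. By essential equidimensionality, there is a big open subset $U ⊆ \Defn(ψ)$ on which $ψ|_U$ is an equidimensional morphism of relative dimension $\dim X - \dim Z$, and shrinking $U$ further we may assume that $\sF^{\sat} = \sT_{X/Z}$ is a subbundle of $\sT_X$ on $U$ whose fibre at any point $x$ is precisely the tangent space to $(ψ|_U)^{-1}(ψ(x))$. Pick a general smooth point $x ∈ D ∩ U$. Then $D$ is transversal to $\sF^{\sat}$ at $x$ if and only if the fibre through $x$ is not contained in $D$ near $x$, which is equivalent to $D$ dominating $Z$ via $ψ$. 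Thus ``$D$ horizontal'' and ``$D$ transversal to $\sF^{\sat}$'' define the same class of components.

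The main obstacle is essentially trivial; the only care required is to ensure that the generic transversality and the horizontality are tested at the same open set of general points of each component, which is handled simply by intersecting the various big open sets from Construction~\ref{constr:reltgt} and Notation~\ref{not:genTrans}.
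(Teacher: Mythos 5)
Your proposal is correct and follows the same route as the paper, which offers no separate proof of this observation: it is taken to be immediate from the remark after Notation~\ref{not:genTrans} (agreement of the two decompositions for the relative tangent foliation of an essentially equidimensional map) together with Claim~\ref{claim:gianluca}, which identifies $\sF^{\sat}$ with $\sT_{X/Z}$. Your componentwise unwinding simply fills in that (itself unproved) remark and is consistent with the paper's local-normal-form setup, so there is nothing further to add.
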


\subsubsection*{Step 4: End of proof}
\approvals{Behrouz & yes \\ Benoît & yes \\ Stefan & yes}

To end the proof, we need to show that the rational map $ψ$ satisfies
Inequality~\ref{il:a2}.  We aim to apply Proposition~\ref{prop:xad}.  To this
end, recall from our construction that
$$
\sF = \sF^{\sat} ∩ \sT_X(-\log \lfloor Δ \rfloor) \quad\text{and}\quad
\sF_{(X,Δ,γ)} = γ^{[*]}\sF ∩ \sT_{(X,Δ,γ)}.
$$
Item~\eqref{il:Y2} of Proposition~\ref{prop:xad} thus gives an equality of
intersection numbers
\begin{align*}
C_γ·[γ^{[*]}\sT_{X/Z}] & = \underbrace{C_γ · [\sF_{(X,Δ,γ)}]}_{> 0\text{ by Obs.~\ref{obs:o1a}}} +\; C_γ·[γ^* Δ^{trans}] + \underbrace{C_γ·[\text{effective}]}_{≥ 0\text{ since $C_γ$ movable}} \\
  & > C_γ·[γ^* Δ^{trans}].
\end{align*}
In summary, we see that $C_γ·[γ^{[*]}\sT_{X/Z}] > C_γ·[γ^* Δ^{trans}]$ and hence
$C·[\sT_{X/Z}] > C·[Δ^{trans}]$ as claimed.  This finishes the proof of
Theorem~\ref{thm:exMor}.  \qed

%
%
\svnid{$Id: 09-genericSemipositivity.tex 162 2016-08-23 23:43:13Z taji $}

\section{Proof of the semipositivity result}
\label{ssec:potosp}
\subversionInfo
\approvals{Behrouz & yes \\ Benoît & yes \\ Stefan & yes}

We prove Theorem~\ref{thm:orbSemPos} in this section.  With the preparations at
hand, the proof is now quite short.  We argue by contradiction and assume that
$Ω^{[1]}_{(X,Δ,γ)}$ is \emph{not} $γ$-generically semipositive.  As we have seen
in Theorem~\ref{thm:exMor}, this implies the existence of a normal variety $Z$,
a dominant, essentially equidimensional, rational map $f: X \dasharrow Z$, and a
family $(C_t)_{t ∈ T}$ of curves that dominates $X$ and avoids small sets, such
that the following inequality holds for all $t ∈ T$,
$$
[\sT_{X/Z}]·[C_t] > [Δ^{horiz}]·[C_t].
$$
Recalling the description of $\sT_{X/Z}$ given in Lemma~\ref{lem:relTx}, this is
equivalent to
$$
[K_{X/Z}+ Δ^{horiz} - \Ramification f]·[C_t] < 0,
$$
contradicting the positivity of relative dualising sheaves that was established
in Theorem~\ref{thm:neat}, and ending the proof of Theorem~\ref{thm:orbSemPos}.
\qed

\end{document}